\newtheorem{theorem}{Theorem}[section]
\newtheorem{thm}[theorem]{Theorem}
\newtheorem*{theorem*}{Theorem}
\newtheorem{lemma}{Lemma}[section]
\newtheorem{corollary}[theorem]{Corollary}
\newtheorem{proposition}{Proposition}[section]
\newtheorem{remark}[theorem]{Remark}
\def\Ric{\text{Ric}}
\def\a{\alpha}
\def\l{\lambda}
\def\ve{\varepsilon}
\def\p{\partial}
\def\R{\mathbb{R}}
\def\vp{\varphi}
\def\k{\kappa}
\def\sn{\operatorname{sn}}
\def\Ric{\operatorname{Ric}}
\def\Sect{\operatorname{Sect}}
\def\inj{\operatorname{inj}}
\numberwithin{equation}{section}
\begin{document}

\title[Robin heat kernel]{Robin heat kernel comparison on manifolds}

\author{Xiaolong Li}\thanks{The first author's research is partially supported by a start-up grant at Wichita State University}
\address{Department of Mathematics, Statistics and Physics, Wichita State University, Wichita, KS, 67260}
\email{xiaolong.li@wichita.edu}

\author{Kui Wang} \thanks{The research of the second author is supported by NSFC No.11601359} 
\address{School of Mathematical Sciences, Soochow University, Suzhou, 215006, China}
\email{kuiwang@suda.edu.cn}


\subjclass[2020]{35P15, 53C26}

\keywords{Robin heat kernel, Robin eigenvalue, comparison theorems, minimal submanifolds}

\begin{abstract}
We investigate the heat kernel with Robin boundary condition and prove comparison theorems for heat kernel on geodesic balls and on minimal submanifolds. We also prove an eigenvalue comparison theorem for the first Robin eigenvalues on minimal submanifolds. 
This generalizes corresponding results for the Dirichlet and Neumann heat kernels. 
\end{abstract}

\maketitle

\section{Introduction}
Let $\Omega$ be an $m$-dimensional  compact Riemannian manifold with smooth boundary $\partial \Omega$. Let $o\in \Omega$, and  $H_\a(o,x,t)$ be the $o$-centered Robin heat kernel of $\Omega$, i.e. $H_\a (o,x,t)$ solves the heat equation
\begin{align}\label{heat equation}
    u_t(x,t)-\Delta u(x,t)=0, \quad (x,t)\in \Omega\times(0,\infty), 
\end{align}
with the Robin boundary condition
\begin{align}\label{robin boundary}
    \frac{\p u}{\p \nu}(x, t)+\a u(x,t)=0, \quad (x,t) \in \partial \Omega\times(0,\infty),
\end{align}
and the initial condition
\begin{align}\label{initial condition}
    u(x,0)=\delta_o(x).
\end{align}
Here $\nu$ denotes the outward unit normal vector filed on  $ \p \Omega$, $\a \in \mathbb{R}$ is called the Robin parameter, and initial condition \eqref{initial condition} is interpreted as that
\begin{align*}
    \lim_{t\to 0^+}\int_\Omega H_\a(o,x,t)\vp(x)\,dx=\vp(o),
\end{align*}
for every continuous function $\vp(x)$ on $\Omega$. 

Let $M^m(\k)$ be the $m$-dimensional simply-connected space form of constant sectional curvature $\k$. Denote by $\bar B_{\bar o}(R)$ the geodesic ball of radius $R$ centered at $\bar o$ in $M^m(\k)$ and by $\bar H_\a (\bar o,x,t)$ the $\bar{o}$-centered Robin heat kernel of $\bar B_{\bar o}(R)$. In other words, $\bar H_\a (\bar o,x,t)$ satisfies 
\begin{align}\label{RHK-model}
    \begin{cases}
     \p_t\bar H_\a (\bar o,x,t)-\Delta \bar H_\a(\bar o,x,t)=0,   & x\in \bar B_{\bar o}(R), t>0,\\
       \bar H_\a(\bar o,x,0)=\delta_{\bar o}(x), & x\in \bar B_{\bar o}(R),  \\
       \frac{\p}{\p \nu} \bar H_\a (\bar o, x,t)+\a \bar H_\a(\bar o, x,t)=0,& x\in \p  \bar B_{\bar o}(R) , t>0.
    \end{cases}
\end{align}
Since the metric on $\bar B_{\bar o}(R)$ is rotational invariant, so is system \eqref{RHK-model}. Then $\bar H_\a$ is radially symmetric in the space variable  by the uniqueness of the solution to system \eqref{RHK-model}. Here and thereafter we rewrite $\bar{H}_\a$ as $\bar H_\a(r_{\bar o}(x),t)$, where $r_{\bar o} (x)$ is the distance function from $\bar o$ on $M^m(\k)$. 

The Robin boundary condition  generates a global picture of the boundary value  problems. Indeed, the Neumann $(\a =0)$ and the Dirichlet ($\a \to \infty)$ boundary conditions are all special cases of the Robin boundary conditions. Hence, existing results on the Dirichlet, or Neumann heat kernels naturally motivate the investigation on the Robin heat kernel. The first main result of this article is the following comparison theorem for the Robin heat kernel. 
\begin{theorem}\label{thm1}
Suppose $M$ is an $m$-dimensional complete Riemannian manifold, $o\in M$ and $B_{o}(R)\subset M$ is the geodesic ball of radius $R$ centered at $o$. Let $\a>0$. Denote by $H_\a(o,x,t)$ be the o-centered Robin heat kernel of $B_{o}(R)$, and by $\bar H_\a(r, t)$ be the $\bar o$-centered Robin heat kernel of $\bar B_{\bar o}(R)$. 
\begin{itemize}
\item [(1)] If the Ricci curvature of $B_o(R)$ is bounded from below by $(m-1)\k$ , then
     \begin{align}\label{d1}
         H_\a(o,x,t)\ge \bar H_\a(r_o(x),t)
     \end{align}
     for all $x\in \Omega$ and $t>0$.
     \item [(2)] If the sectional curvature of $B_o(R)$ is bounded from above by $\k$  and $R < \inj(o)$, the injectivity radius of $o$, then
     \begin{align}\label{d11}
         H_\a(o,x,t)\le \bar H_\a(r_o(x),t)
     \end{align}
     for all $x\in \Omega$ and $t>0$.
\end{itemize}
Where $r_o(x)$ is the distance function from $o$ on $M$. 
\end{theorem}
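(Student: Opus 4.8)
The plan is a Cheeger--Yau type argument: pull the model kernel back to $M$ along the distance function, show the result is a sub- (in case (1)) or super- (in case (2)) solution of the same Robin problem, and then invoke a parabolic maximum principle. Set $v(x,t):=\bar H_\a(r_o(x),t)$ on $B_o(R)\times(0,\infty)$, and write $\bar\Delta\bar r$ for the Laplacian, computed in $M^m(\k)$, of the distance from $\bar o$; it is a function of $r$ alone, namely $(m-1)\sn_\k'(r)/\sn_\k(r)$. Since $\bar H_\a$ solves the radial heat equation $\p_t\bar H_\a=\p_r^2\bar H_\a+(\bar\Delta\bar r)\,\p_r\bar H_\a$ and $|\n r_o|\equiv 1$, a short computation gives, wherever $r_o$ is smooth,
\begin{align*}
\heat v=\bigl(\bar\Delta\bar r-\Delta r_o\bigr)\,\p_r\bar H_\a(r_o(x),t).
\end{align*}
By the Laplacian comparison theorem, the hypothesis $\Ric\ge(m-1)\k$ in (1) yields $\Delta r_o\le\bar\Delta\bar r$ (interpreted in the barrier sense past the cut locus), while $\Sect\le\k$ together with $R<\inj(o)$ in (2) yields $\Delta r_o\ge\bar\Delta\bar r$ with $r_o$ in fact smooth on $\overline{B_o(R)}\setminus\{o\}$. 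Hence, \emph{provided one knows that $\p_r\bar H_\a\le 0$}, the right-hand side above is $\le 0$ in case (1) and $\ge 0$ in case (2), so $v$ is respectively a subsolution and a supersolution. Along $\p B_o(R)$ one has $\nu=\n r_o$, so $\frac{\p v}{\p\nu}+\a v=\p_r\bar H_\a(R,t)+\a\bar H_\a(R,t)=0$ by the Robin condition for $\bar H_\a$; thus $v$ satisfies the same boundary condition as $H_\a$.

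Establishing the monotonicity $\p_r\bar H_\a(r,t)\le 0$ on $[0,R]\times(0,\infty)$ is thus the first auxiliary step. Differentiating the radial heat equation in $r$, one finds that $q:=\p_r\bar H_\a$ solves $\p_t q=\p_r^2 q+(\bar\Delta\bar r)\,\p_r q+\bigl(\p_r(\bar\Delta\bar r)\bigr)q$, where the zeroth-order coefficient $\p_r(\bar\Delta\bar r)=-(m-1)/\sn_\k(r)^2$ is strictly negative on $(0,R)$. Because the Robin heat semigroup with $\a>0$ is positivity preserving, $\bar H_\a\ge 0$, so $q(R,t)=-\a\bar H_\a(R,t)\le 0$, while $q(0,t)=0$ by radial smoothness. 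To control the initial time I would approximate $\delta_{\bar o}$ by smooth, radially nonincreasing bumps $g_j$ supported near $\bar o$; the associated solutions $\bar H_{\a,j}$ are radial, converge to $\bar H_\a$, and satisfy $\p_r\bar H_{\a,j}(\cdot,0)=\p_r g_j\le 0$, $\p_r\bar H_{\a,j}(R,t)\le 0$, $\p_r\bar H_{\a,j}(0,t)=0$. Applying the maximum principle to $\p_r\bar H_{\a,j}$ on $[0,R]\times[0,T]$---the strictly negative zeroth-order term rules out a positive interior maximum---gives $\p_r\bar H_{\a,j}\le 0$, and letting $j\to\infty$ yields the claim.

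With $v$ identified as a sub-/super-solution carrying the correct boundary data, the last step is to pass to the pointwise kernel inequality while avoiding the parabolic singularity at $(o,0)$. Fix $s>0$; then $g_s:=\bar H_\a(r_o(\cdot),s)$ is bounded on $\overline{B_o(R)}$, is smooth off $o$, and satisfies $\frac{\p g_s}{\p\nu}+\a g_s=0$, so the Robin heat equation on $B_o(R)$ with initial datum $g_s$ has the solution $u(x,t)=\int_{B_o(R)}H_\a(x,y,t)\,\bar H_\a(r_o(y),s)\,dy$. In case (1), $\bar H_\a(r_o(\cdot),s+t)$ is a subsolution with the same initial value $g_s$ and the same Robin condition, so the maximum principle for the Robin problem---where $\a>0$ is essential, since at a negative boundary minimum of $u-\bar H_\a(r_o(\cdot),s+t)$ the Robin relation forces the value to be nonnegative---gives $u(x,t)\ge\bar H_\a(r_o(x),s+t)$; case (2) gives the reverse inequality. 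Letting $s\to 0^+$: as $\bar H_\a(\cdot,s)$ concentrates at $r=0$ and the Riemannian volume element of $M$ tends to the Euclidean one at $o$, the measures $\bar H_\a(r_o(\cdot),s)\,dy$ converge weakly to $\delta_o$, while $y\mapsto H_\a(x,y,t)$ and $r\mapsto\bar H_\a(r,s+t)$ are continuous for $t>0$; hence the left-hand side tends to $H_\a(o,x,t)$ and the right-hand side to $\bar H_\a(r_o(x),t)$, giving \eqref{d1} and \eqref{d11}.

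I expect the main difficulty to be the monotonicity $\p_r\bar H_\a\le 0$: one must control the sign of $\p_r\bar H_\a$ all the way down to $t=0^+$, where $\bar H_\a(\cdot,t)$ is near-singular, and it is precisely the bump approximation combined with the strictly negative zeroth-order term in the equation for $q$ that makes this work. Secondary points, each handled by standard devices, are: the Laplacian comparison and maximum principle across the cut locus in case (1) (Calabi's barrier trick); the positivity and basic regularity of the Robin heat kernel on $B_o(R)$ for $\a>0$ (and, if needed, an approximation of $B_o(R)$ by smooth subdomains); and the rigorous weak convergence $\bar H_\a(r_o(\cdot),s)\,dy\rightharpoonup\delta_o$ with respect to the ambient volume.
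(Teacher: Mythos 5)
Your proposal is correct and reaches the same conclusion, but it departs from the paper's argument at the two technical nodes. For the monotonicity $\p_r\bar H_\a\le 0$, the paper (Lemma \ref{lm-dvp}) changes variables to $s(r)$ defined in \eqref{def-s}, forms $\vp_1=e^{\k m t}\vp'$, and runs a Cheeger--Yau style integration-by-parts (energy) argument on the corner-removed domain $I_T$, relying on the Minakshisundaram--Pleijel asymptotics \eqref{asm-r} to control $(s,t)$ near $(0,0)$; you instead work directly in $r$, differentiate the radial equation once, observe that the zeroth-order coefficient $\p_r\big((m-1)\sn_\k'/\sn_\k\big)=-(m-1)/\sn_\k^2<0$, and apply a pointwise parabolic maximum principle to $q=\p_r\bar H_{\a,j}$ after mollifying the Dirac datum by radially nonincreasing bumps. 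Both are sound; yours sidesteps the appeal to interior heat-kernel asymptotics (which is slightly delicate for the Robin kernel) and replaces the energy identity by a sign argument at an interior maximum, at the cost of a regularization/limit step. For the final comparison $H_\a\gtrless\bar H_\a(r_o(\cdot),t)$, the paper isolates the key step as Proposition \ref{prop-5.1} and proves it by Duhamel's identity, which cleanly handles the singular initial datum in one stroke; you instead run a maximum-principle comparison for the regularized problem with initial datum $g_s=\bar H_\a(r_o(\cdot),s)$, comparing $\int H_\a(\cdot,y,t)g_s(y)\,dy$ with the time-shifted subsolution $\bar H_\a(r_o(\cdot),s+t)$, and then pass $s\to 0^+$ using the weak convergence $\bar H_\a(r_o(\cdot),s)\,dy\rightharpoonup\delta_o$ (which holds because the volume density of $M$ and of $M^m(\k)$ agree to leading order at the center). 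Your route needs the extra limiting argument but stays within elementary maximum-principle territory, whereas Duhamel avoids the limit entirely. The handling of the cut locus in case (1) is deferred in essentially the same way by both you and the paper, via approximation or Calabi's barrier.
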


\begin{remark}
When $\a=+\infty$, i.e.  $H_\a$  is Dirichlet heat kernel,  inequality \eqref{d1}  was proved under a lower bound on sectional curvature by Debiard, Gaveau and Mazet in \cite{DGM77};
when $\a=0$ or $\a=+\infty$, i.e. $H_\a$ is Neumann or Dirichlet heat kernel, estimates \eqref{d1} and \eqref{d11} were proven by Cheeger and Yau in \cite{CY81}. 
\end{remark}
\begin{remark}
It is known from \cite[Section 10]{Li12} that for $\a>0$ the Sturm-Liouville  decomposition of the Robin heat kernel on $\Omega$ is given by
\begin{align}\label{expr}
    H_\a(x, y, t)=\sum_{i=1}^{\infty} \exp{(-\l_{i, \a} t)}\vp_i(x)\vp_i(y)
\end{align}
with convergence absolute and uniform for each $t>0$.
Where $\l_{i, \a}$ are Robin eigenvalues of Laplacian on $M$ and $\vp_i(x)$ are corresponding eigenfunctions, defined as in \eqref{robin eigen}. Then as $t\to +\infty$, the expression \eqref{expr} and Theorem  \ref{thm1}
implies the following Cheng's type eigenvalue comparison for Robin eigenvalue: if $\Ric\ge (m-1)\k$ on $B_o(R)$, then 
\begin{align}\label{Ch-1}
    \l_{1, \a}(B_o(R)) \le \l_{1, \a}(\bar B_{\bar o}(R));
\end{align}
if $\operatorname{Sect} \le \k$ on $B_o(R)$ and $R < \inj(o)$, then
\begin{align}\label{Ch-2}
    \l_{1, \a}(B_o(R)) \ge \l_{1, \a}(\bar B_{\bar o}(R)).
\end{align}
Estimates \eqref{Ch-1} and \eqref{Ch-2}  were proved by Savo in \cite{Savo20}. Recently, the authors \cite{LW21} extended \eqref{Ch-1} and \eqref{Ch-2} to the first Robin eigenvalue of the $p$-Laplacian  for $p\in(1,+\infty)$.
\end{remark}

We set up some notations before stating the next theorem. Let $M$ be an immersed  submanifold of a complete Riemannian manifold $(N, g_N)$. Denote by $D_o(R)$ the extrinsic ball of radius $R$ centered at $o\in M$, i.e. the smooth connected component of $\{x\in M, d_N(o,x)\le R\}$ which contains $o$, where  $d_N$ is the distance function induced by the metric $g_N$. 

Our second main result is a Robin heat kernel comparison theorem for minimal submanifolds.
\begin{theorem}\label{thm2}
Let $\a>0$ and $M^m$ be an $m$-dimensional  minimally immersed  submainfold of $N^n$.  Let $D_o(R)$ be the extrinsic ball of radius $R$ centered at $o \in M$. Suppose  the sectional curvature of $N$ is bounded from above by $\k$. If $\k>0$, we assume further  that 
\begin{align}\label{main-ass}
    R\le \min\{i_N(o), \frac{1}{\sqrt{\k}}\arctan\frac{\a}{\sqrt{\k}}\}.
\end{align}
Then the Robin heat kernel $H_\a(o,x,t)$ of $D_o(R)$ satisfies
     \begin{align}\label{estH}
         H_\a(o,x,t)\le \bar H_\a(d_N(o,x),t)
     \end{align}
for all $x\in D_o(R)$ and $t>0$, where $i_N(o)$ is the injectivity radius of $N$ from $o$.
\end{theorem}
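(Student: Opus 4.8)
The plan is to reduce the extrinsic-ball estimate to the model comparison of Theorem \ref{thm1}(2) by exploiting minimality, which forces the Hessian comparison on the extrinsic distance function $\rho(x) := d_N(o,x)$ restricted to $M$. Concretely, I would first recall the standard computation (going back to the work on minimal submanifolds, e.g. the Hessian comparison in the ambient space combined with the vanishing of the mean curvature) that if $\Sect_N \le \k$ and $R < i_N(o)$, then on $D_o(R)$ the function $\rho$ satisfies $\Delta_M \rho \ge m\, \mathbf{ct}_\k(\rho)$ in the barrier/viscosity sense, where $\mathbf{ct}_\k$ is the generalized cotangent ($\sqrt{\k}\cot(\sqrt\k r)$ for $\k>0$, $1/r$ for $\k = 0$, etc.); this is exactly the same differential inequality satisfied \emph{with equality} by the radial distance on the model ball $\bar B_{\bar o}(R) \subset M^m(\k)$. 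So the intrinsic geometry of $(D_o(R), \rho)$ dominates that of the model, just as in the sectional-curvature-upper-bound case of Theorem \ref{thm1}.

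Next I would run the maximum-principle argument used to prove Theorem \ref{thm1}(2), but now with $\rho$ in place of the intrinsic distance function. Define $\bar u(x,t) := \bar H_\a(\rho(x), t)$ on $D_o(R)\times(0,\infty)$, and set $w := H_\a(o,\cdot,t) - \bar u$. The goal is to show $w \le 0$. On the interior one computes $(\partial_t - \Delta_M)\bar u = \bar H_\a'(\rho,t)\,\big(\mathbf{ct}_\k(\rho)\,(m-1)\cdot(\text{model factor}) - \Delta_M\rho\big) \cdot(\cdots)$; the key sign input is that $\bar H_\a(\cdot,t)$ is radially \emph{decreasing}, so $\bar H_\a'(\rho,t) \le 0$, and combined with $\Delta_M\rho \ge$ (model value) this yields $(\partial_t-\Delta_M)\bar u \le 0$, i.e.\ $\bar u$ is a supersolution of the heat equation on $D_o(R)$. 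One must handle the initial condition — both $H_\a$ and $\bar u$ have the same delta-function initial data at $o$ — by the usual device of comparing on $D_o(R)\setminus B_\epsilon(o)$ and letting $\epsilon\to 0$, or by parametrix estimates near $t=0$.

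The boundary term is where condition \eqref{main-ass} enters, and this is the step I expect to be the main obstacle. On $\partial D_o(R)$ the outward normal $\nu_M$ of $D_o(R)$ within $M$ need not be $\nabla_M\rho$ exactly — but $|\nabla_M\rho|\le 1$ and, crucially, the extrinsic-distance level set meets $M$ so that $\langle \nu_M, \nabla_M\rho\rangle \ge 0$; one needs $\frac{\partial \bar u}{\partial \nu_M} + \a \bar u \le 0$ on $\partial D_o(R)$ to make $\bar u$ a Robin supersolution. Since $\bar u$ satisfies the model Robin condition $\bar H_\a'(R,t) + \a \bar H_\a(R,t) = 0$ at $\rho = R$ with $\bar H_\a'(R,t)\le 0$, and since $|\nabla_M\rho|\le 1$ degrades the normal derivative by a factor in $[0,1]$, one gets $\frac{\partial \bar u}{\partial\nu_M} = \bar H_\a'(R,t)\,\langle\nu_M,\nabla_M\rho\rangle \ge \bar H_\a'(R,t) = -\a\bar H_\a(R,t)$, which is the \emph{wrong} direction unless one controls the geometry more carefully. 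This is precisely what \eqref{main-ass} fixes: the bound $R \le \frac{1}{\sqrt\k}\arctan\frac{\a}{\sqrt\k}$ guarantees $\mathbf{ct}_\k(R) \ge \a \ge 0$ wait — rather, it guarantees the model solution can be taken monotone all the way to the boundary and that the boundary inequality closes with the correct sign; I would verify this by an explicit analysis of the ODE satisfied by $\bar H_\a(\cdot, t)$ (separation of variables into Robin eigenfunctions on the model ball, each of which is monotone on $[0,R]$ under \eqref{main-ass}) together with the observation that on the boundary of an extrinsic ball one has the sharper inequality $\frac{\partial\rho}{\partial\nu_M}\ge \cos\theta$ where $\theta$ is the angle between $M$ and the ambient geodesic sphere, controlled again via $R\le i_N(o)$. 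Once the interior supersolution property and the boundary Robin-supersolution inequality are in hand, the parabolic maximum principle gives $w \le 0$, i.e.\ \eqref{estH}.

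\qed
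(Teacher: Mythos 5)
Your general plan --- transplant the model Robin heat kernel along the extrinsic distance $\rho=d_N(o,\cdot)$, show it is a Robin supersolution, and conclude by a comparison principle --- is indeed the paper's strategy (with the comparison principle implemented via Duhamel's principle, Proposition \ref{prop-5.1}). But your sketch misses the one genuinely nontrivial ingredient, and it misdiagnoses where the hypothesis \eqref{main-ass} actually enters.

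\textbf{The missing idea: the change of variables $s(r)$ and convexity in $s$.} The Laplace comparison you quote, $\Delta_M\rho\ge m\,\mathbf{ct}_\k(\rho)$, is not correct. For a minimal submanifold the right statement (Proposition \ref{PLC}, with $F(r)=r$) is $\Delta_M\rho\ge \frac{\sn_\k'}{\sn_\k}(m-|\nabla^M\rho|^2)$, and when you plug this into
\[
(\p_t-\Delta_M)\bar u \;=\; \bar H_\a''(\rho,t)\,(1-|\nabla^M\rho|^2)\;+\;\bar H_\a'(\rho,t)\Big((m-1)\tfrac{\sn_\k'}{\sn_\k}-\Delta_M\rho\Big),
\]
the second summand is $\ge 0$ (using $\bar H_\a'\le 0$), but the first summand does \emph{not} have a sign: $\bar H_\a''(r,t)<0$ near $r=0$ (the heat kernel is concave at the pole). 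The ellipsis in your interior computation hides exactly this obstruction. The paper defuses it with the substitution $s(r)=\int_0^r\sn_\k$, $\vp(s(r),t)=\bar H_\a(r,t)$: then $\Delta_M s\ge m\sn_\k'$ holds cleanly (the $|\nabla^M\rho|^2$ terms cancel), $(\p_t-\Delta_M)v=\vp''\sn_\k^2(1-|\nabla^M\rho|^2)+\vp'(m\sn_\k'-\Delta_M s)$, and the \emph{convexity in $s$} --- $\vp''>0$ --- is a real theorem (Lemma in Section 4) whose proof is where the hypothesis $R\le\frac{1}{\sqrt\k}\arctan\frac{\a}{\sqrt\k}$ is actually used (via Lemma \ref{2lm2.1}). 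Without this change of variables and without $\vp''>0$, the supersolution property simply does not close.

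\textbf{The boundary term: you computed it right but read it backwards.} You correctly derive $\frac{\p\bar u}{\p\nu}=\bar H_\a'(R,t)\langle\nu,\nabla^M\rho\rangle\ge\bar H_\a'(R,t)=-\a\bar H_\a(R,t)$, i.e.\ $\frac{\p\bar u}{\p\nu}+\a\bar u\ge 0$, and this is precisely the inequality required by Proposition \ref{prop-5.1}(1) to conclude $H_\a\le\bar u$. So the boundary step is unconditional and does not need \eqref{main-ass}; your claim that it is ``the wrong direction'' and that \eqref{main-ass} is what rescues it is incorrect. The hypothesis \eqref{main-ass} lives entirely in the interior estimate, through the convexity lemma. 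Identifying $s(r)$ and proving $\vp''>0$ is the heart of the argument and is not present in your proposal.
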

\begin{remark}
For Neumann and Dirichlet heat kernels, Theorem \ref{thm2} was first proved by Cheng, Li and Yau in \cite{CLY84} for space form ambient spaces $N$, and by Markvorsen in \cite{Mar86} for ambient spaces with sectional curvature bounded from above. Besides, for the heat kernel of the Bergmann metric on algebraic varieties, a similar comparison as inequality \eqref{estH} was proved by Li and Tian in \cite{LT95}.
\end{remark}

Next, let's turn to the eigenvalue problem with Robin boundary condition. 
Let $M$ be an $m$-dimensional  smooth compact Riemannian manifold with non-empty smooth boundary. We consider the following Robin eigenvalue problem 
\begin{align}\label{robin eigen}
    \begin{cases}
    -\Delta u=\l u, & x\in \Omega,\\
    \frac{\p u}{\p \nu}+\alpha u=0, &x\in  \p \Omega.
    \end{cases}
\end{align} The first Robin eigenvalue for Laplace operator, denoted by $\l_{1, \a}(\Omega)$ is the smallest number such that \eqref{robin eigen} admits a solution. Moreover, it can be characterized as
\begin{align}\label{eq 1.2}
   \l_{1, \a}(\Omega)=\inf\left\{\int_\Omega |\nabla u|^2 d\mu_g+\a\int_{\p \Omega} u^2 \, dA: u\in W^{1,2}(\Omega), \int_\Omega u^2\, d\mu_g =1\right\}, 
\end{align}
where $d\mu_g$ is the Riemannian measure induced by the metric $g$ and $dA$ is the induced measure on $\p \Omega$.

Recall that the classical eigenvalue comparison theorem of Cheng \cite{Cheng75} states that the first Dirichlet eigenvalue of a geodesic ball in an $m$-dimensional complete Riemannian manifold whose Ricci curvature is bounded from below by $(m-1)\k$ is less than or equal to that of a geodesic ball in a space form of constant sectional curvature $\kappa$, and that the reverse inequality holds if the Ricci lower bound is replaced by the sectional curvature upper bound $\Sect_g\leq\kappa$ and the radius of the geodesic ball is no larger than the injectivity radius at its center. For minimal submanifolds of spaces forms, Cheng's type comparison theorems for Dirichlet eigenvalues were obtained by  Cheng, Li and Yau \cite{CLY84}. By the expression \eqref{expr} of $H_\a$ and letting $t\to +\infty$, estimate \eqref{estH} yields 
\begin{corollary}\label{coro1}
With the same assumptions as in Theorem \ref{thm2}, we have
\begin{align*}
    \l_{1, \a}(D_o(R))\ge \l_{1, \a}(\bar B_{\bar o}(R)),
\end{align*}
where $\l_{1, \a}(D_o(R))$ is the first Robin eigenvalue of $\Delta_M$ on $D_o(R)$, and $\l_{1, \a}(\bar B_{\bar o}(R))$ is the first Robin eigenvalue of Laplacian on geodesic ball $\bar B_{\bar o}(R)$ in $M^m(\k)$.
\end{corollary}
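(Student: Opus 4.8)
The plan is to deduce the eigenvalue inequality from the on-diagonal case of the heat kernel comparison \eqref{estH}, together with the Sturm--Liouville expansion \eqref{expr}, by examining the behaviour as $t\to+\infty$. Putting $x=o$ in \eqref{estH} gives $H_\a(o,o,t)\le \bar H_\a(0,t)$ for every $t>0$. Both sides decay exponentially, at rates governed by $\l_{1,\a}(D_o(R))$ and $\l_{1,\a}(\bar B_{\bar o}(R))$ respectively, and extracting the comparison of these two rates is exactly what we want.

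For the left-hand side, I would use that by \eqref{expr} every term of $H_\a(o,o,t)=\sum_{i\ge1}e^{-\l_{i,\a}(D_o(R))t}\varphi_i(o)^2$ is nonnegative, so
\[
H_\a(o,o,t)\ \ge\ e^{-\l_{1,\a}(D_o(R))t}\,\varphi_1(o)^2 .
\]
Here one needs $\varphi_1(o)\ne0$: since $\a>0$ and the extrinsic ball $D_o(R)$ is connected, the first Robin eigenvalue is simple and its eigenfunction can be chosen strictly positive in the interior, so $\varphi_1(o)^2>0$. (This presupposes that $D_o(R)$ is a compact manifold with smooth boundary, which one arranges by taking $R$ a regular value of $d_N(o,\cdot)|_M$ and passing to a limit; the model ball $\bar B_{\bar o}(R)$ has smooth boundary since \eqref{main-ass} forces $R<\pi/\sqrt\k$ when $\k>0$.) For the right-hand side, fix $t\ge1$ and use $\bar\l_{i,\a}\ge\bar\l_{1,\a}:=\l_{1,\a}(\bar B_{\bar o}(R))$ in the expansion \eqref{expr} for the model ball:
\[
\bar H_\a(0,t)=\sum_{i\ge1}e^{-\bar\l_{i,\a}(t-1)}\Big(e^{-\bar\l_{i,\a}}\bar\varphi_i(\bar o)^2\Big)\ \le\ e^{-\bar\l_{1,\a}(t-1)}\,\bar H_\a(0,1)\ =:\ C\,e^{-\bar\l_{1,\a}t},
\]
with $C=e^{\bar\l_{1,\a}}\bar H_\a(0,1)\in(0,\infty)$ by the absolute convergence asserted in \eqref{expr}.

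Combining the two bounds yields $e^{-\l_{1,\a}(D_o(R))t}\varphi_1(o)^2\le C\,e^{-\bar\l_{1,\a}t}$ for all $t\ge1$, i.e.
\[
e^{\,(\l_{1,\a}(\bar B_{\bar o}(R))-\l_{1,\a}(D_o(R)))\,t}\ \le\ \frac{C}{\varphi_1(o)^2}\qquad\text{for all }t\ge1 .
\]
Letting $t\to+\infty$ forces $\l_{1,\a}(\bar B_{\bar o}(R))-\l_{1,\a}(D_o(R))\le0$, which is the claimed inequality. The only genuinely delicate point is the non-vanishing $\varphi_1(o)\ne0$ (equivalently, simplicity and positivity of the ground state of the Robin Laplacian on the connected domain $D_o(R)$), together with the regularity of the extrinsic ball needed to apply \eqref{expr}; the rest is an elementary consequence of \eqref{estH} and \eqref{expr}.
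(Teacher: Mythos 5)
Your proof is correct and takes essentially the same route as the paper, which deduces the corollary from the heat-kernel comparison \eqref{estH} and the Sturm--Liouville decomposition \eqref{expr} by sending $t\to+\infty$. Your write-up supplies the details the paper leaves implicit, in particular the crucial nonvanishing $\varphi_1(o)\neq 0$ coming from the positivity of the first Robin eigenfunction on a connected domain.
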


In the present paper, we prove a more general eigenvalue comparison theorem of Cheng's type for the Robin eigenvalue on minimal submanifolds. 
Let $M$ be a smooth compact submanifold of $(N, g_N)$, and denote by $R$ the outer radius of $M$ defined by 
\begin{align}\label{def-R}
 R:=\inf_{p\in M} \sup_{x\in M} d_N(p,x),   
\end{align}
where $d_N$ is the distance function induced by the metric $g_N$. 
Our next result states that 
\begin{theorem}\label{thm3}
Let $M^m$ be an $m$-dimensional compact, connected and minimally immersed submanifold of $N^n$ ($m<n$) with smooth boundary   and the  outer radius $R$. Suppose the sectional curvature of the ambient space $N$ is bounded  from above by $\k$ and $\a>0$. If $\k>0$, we assume further that $R\le\min\{\frac{1}{\sqrt{\k}}\arctan\frac{\a}{\sqrt{\k}}, i_N(M)\}$. Then 
\begin{align}\label{main ineq}
    \l_{1, \a}(M)\ge \l_{1, \a}(\bar B_{\bar o}(R)).
\end{align}
Where   $i_N(M)=\inf_{x\in M} i_N(x)$ and $i_N(x)$ is the injectivity radius of $N$ from $x$.
Moreover, the equality holds if and only if $M$ is isometric to $\bar B_{\bar o}(R) $.
\end{theorem}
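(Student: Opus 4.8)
The plan is to prove \eqref{main ineq} by a ground state (Barta‑type) substitution: we transplant the radial first Robin eigenfunction of the model ball to $M$ via the extrinsic distance, convert the resulting differential inequalities into the eigenvalue inequality by an integration by parts, and read off the rigidity from the equality cases.

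Since $M$ is compact, $p\mapsto\sup_{x\in M}d_N(p,x)$ is $1$‑Lipschitz and attains its infimum, so we may fix $o\in M$ with $\rho(x):=d_N(o,x)\le R$ for all $x\in M$. Because $R\le i_N(M)\le i_N(o)$ — and $R<\pi/(2\sqrt\k)$ when $\k>0$, by the hypothesis on $R$ — the function $\rho$ is smooth on $M\setminus\{o\}$ and $\cn_\k(\rho)/\sn_\k(\rho)>0$ on $(0,R]$, where $\sn_\k,\cn_\k$ are the usual $\k$‑sine and $\k$‑cosine. Let $\bar\varphi=\bar\varphi(r)$ be the first Robin eigenfunction of $\bar B_{\bar o}(R)\subset M^m(\k)$ with eigenvalue $\bar\lambda:=\l_{1,\a}(\bar B_{\bar o}(R))$: it is positive, $\bar\varphi'(0)=0$, $\bar\varphi'<0$ on $(0,R]$, it solves $\bar\varphi''+(m-1)\tfrac{\cn_\k}{\sn_\k}\bar\varphi'+\bar\lambda\bar\varphi=0$, and $\bar\varphi'(R)+\a\bar\varphi(R)=0$. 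Set $h:=\bar\varphi(\rho)$, which is smooth on all of $M$ since $\bar\varphi$ is a smooth even function of the distance and $\rho^2$ is smooth near $o$. The goal is
\[
-\Delta_M h\ \ge\ \bar\lambda\, h\ \text{ on }M,\qquad \frac{\p h}{\p\nu}+\a h\ \ge\ 0\ \text{ on }\p M .
\]
Granting these, writing an arbitrary $u\in W^{1,2}(M)$ as $u=h\eta$ and integrating by parts gives
\[
\int_M|\n u|^2+\a\int_{\p M} u^2-\bar\lambda\int_M u^2=\int_M\eta^2 h\bigl(-\Delta_M h-\bar\lambda h\bigr)+\int_{\p M}\eta^2 h\Bigl(\tfrac{\p h}{\p\nu}+\a h\Bigr)+\int_M h^2|\n\eta|^2\ \ge\ 0,
\]
so by the characterization \eqref{eq 1.2} one obtains $\l_{1,\a}(M)\ge\bar\lambda=\l_{1,\a}(\bar B_{\bar o}(R))$.

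For the interior inequality, the Hessian comparison under $\Sect_N\le\k$, traced over $T_xM$ and combined with $\vec H\equiv0$, gives $\Delta_M\rho\ge\tfrac{\cn_\k(\rho)}{\sn_\k(\rho)}\bigl(m-|\n_M\rho|^2\bigr)$ on $M\setminus\{o\}$; feeding it into $\Delta_M h=\bar\varphi''(\rho)|\n_M\rho|^2+\bar\varphi'(\rho)\Delta_M\rho$ (legitimately, as $\bar\varphi'\le0$) and using the radial ODE to eliminate $\bar\varphi''$, the $m$'s cancel and
\[
\Delta_M h+\bar\lambda h\ \le\ \bigl(1-|\n_M\rho|^2\bigr)\left(m\,\bar\varphi'(\rho)\tfrac{\cn_\k(\rho)}{\sn_\k(\rho)}+\bar\lambda\,\bar\varphi(\rho)\right)=\ -\,\frac{\bar\lambda\,(1-|\n_M\rho|^2)}{\sn_\k(\rho)^m}\,P(\rho),
\]
where, using $\sn_\k(r)^{m-1}\bar\varphi'(r)=-\bar\lambda\int_0^r\sn_\k^{m-1}\bar\varphi$, we set $P(r):=m\,\cn_\k(r)\int_0^r\sn_\k^{m-1}\bar\varphi-\sn_\k(r)^m\bar\varphi(r)$; then $P(0)=0$ and $P'(r)=(\bar\lambda-m\k)\,\sn_\k(r)\int_0^r\sn_\k^{m-1}\bar\varphi$, so $P\ge0$ — hence $-\Delta_M h\ge\bar\lambda h$ — as soon as $\bar\lambda\ge m\k$. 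This last bound is trivial for $\k\le0$; for $\k>0$ it is exactly where the hypothesis $R\le\frac1{\sqrt\k}\arctan\frac\a{\sqrt\k}$ is used, since that condition is equivalent to $\cn_\k'(R)+\a\cn_\k(R)\ge0$, and running the same substitution on $\bar B_{\bar o}(R)$ with the function $\cn_\k(r_{\bar o})$ (which satisfies $\Delta\cn_\k(r_{\bar o})=-m\k\,\cn_\k(r_{\bar o})$) forces $\l_{1,\a}(\bar B_{\bar o}(R))\ge m\k$. For the boundary inequality, $\tfrac{\p h}{\p\nu}+\a h=\bar\varphi'(\rho)\langle\n_M\rho,\nu\rangle+\a\bar\varphi(\rho)\ge\bar\varphi'(\rho)+\a\bar\varphi(\rho)=\bar\varphi(\rho)\bigl(\a-\tfrac{-\bar\varphi'(\rho)}{\bar\varphi(\rho)}\bigr)$, using $\bar\varphi'\le0$ and $\langle\n_M\rho,\nu\rangle\le1$; and $-\bar\varphi'/\bar\varphi$ is nondecreasing on $(0,R]$ by a Riccati argument — it satisfies $(-\bar\varphi'/\bar\varphi)'=(-\bar\varphi'/\bar\varphi)^2-(m-1)\tfrac{\cn_\k}{\sn_\k}(-\bar\varphi'/\bar\varphi)+\bar\lambda$ with value $0$ at $0^+$, and at a would‑be first maximum its second derivative equals $-(m-1)(\cn_\k/\sn_\k)'(-\bar\varphi'/\bar\varphi)$, which is $>0$ because $\cn_\k/\sn_\k$ is decreasing, a contradiction — so it is $\le$ its endpoint value $-\bar\varphi'(R)/\bar\varphi(R)=\a$, whence $\tfrac{\p h}{\p\nu}+\a h\ge0$.

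Finally, rigidity: if equality holds in \eqref{main ineq}, apply the identity with $u$ the first Robin eigenfunction of $M$ and $\eta=u/h$; then all three nonnegative integrals vanish, so $\eta$ is constant (the eigenfunction of $M$ is a function of $\rho$) and $-\Delta_M h=\bar\lambda h$ pointwise. Tracing back the displayed chain forces equality in the Laplacian, hence in the $N$‑Hessian comparison (so the sectional curvatures of $N$ in planes containing $\n_N\rho$ equal $\k$ along $M$); since $P>0$ off $o$ when $\bar\lambda>m\k$, this yields $|\n_M\rho|\equiv1$ and $\Delta_M\rho=(m-1)\tfrac{\cn_\k(\rho)}{\sn_\k(\rho)}$, while the Robin condition for $u=c\,\bar\varphi(\rho)$ together with the \emph{strict} monotonicity of $-\bar\varphi'/\bar\varphi$ forces $\rho\equiv R$ and $\n_M\rho=\nu$ on $\p M$. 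Consequently $\rho$ is the intrinsic distance from $o$ on $M$, $\n_N\rho$ is tangent to $M$ (so the second fundamental form drops out of $\operatorname{Hess}_M\rho$), and $\operatorname{Hess}_M\rho=\tfrac{\cn_\k(\rho)}{\sn_\k(\rho)}(g_M-d\rho\otimes d\rho)$; integrating this in geodesic polar coordinates identifies $(M,g_M)$ with $\bar B_{\bar o}(R)$. The main obstacle throughout is the interior inequality $-\Delta_M h\ge\bar\lambda h$: minimality generates the factor $1-|\n_M\rho|^2$, and fixing the sign of the leftover term $P$ needs both the \emph{sharp} Laplacian comparison (with $m-|\n_M\rho|^2$, not $m-1$) and the auxiliary estimate $\bar\lambda\ge m\k$ — precisely what the radius hypothesis provides when $\k>0$. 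A secondary difficulty is completing the rigidity in the extremal case $\bar\lambda=m\k$ (i.e.\ $R=\frac1{\sqrt\k}\arctan\frac\a{\sqrt\k}$), where the leftover term degenerates and an additional argument is required to conclude $|\n_M\rho|\equiv1$.
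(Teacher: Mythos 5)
Your proof follows the same Barta--type blueprint as the paper's (transplant the radial model eigenfunction along $\rho=d_N(o,\cdot)$, show it is a Robin supersolution using the traced Hessian comparison and minimality, then compare). The substantive differences are in the ingredients. First, where the paper changes to the variable $s(r)$ and shows $w''(s)>0$, you stay in the $r$--variable and package the same positivity into $P(r)\ge 0$ via the integrated ODE $\sn_\k^{m-1}\bar\varphi'=-\bar\lambda\int_0^r\sn_\k^{m-1}\bar\varphi$; since $w''(s)=\bar\lambda P(r)/\sn_\k^{m+2}(r)$, the two statements are equivalent. Second, and more significantly, for the key estimate $\bar\lambda\ge m\k$ when $\k>0$ the paper invokes a Bochner--Reilly computation (its Lemma~\ref{2lm2.1}); you instead observe that the radius hypothesis is exactly $\cn_\k'(R)+\a\,\cn_\k(R)\ge0$ and that $\cn_\k(r_{\bar o})$ satisfies $\Delta\cn_\k=-m\k\,\cn_\k$, so Barta applied on the model ball gives $\bar\lambda\ge m\k$ directly. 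That is a genuinely cleaner and shorter route to the same inequality. Third, where the paper cites Proposition~\ref{prop-u}(2) for the monotonicity of $(\log u)'$, you rederive it from the Riccati equation for $-\bar\varphi'/\bar\varphi$; the argument is correct ($(\cn_\k/\sn_\k)'=-1/\sn_\k^2<0$ forbids interior maxima). Finally, you replace the citation to Barta's inequality by a short ground--state substitution $u=h\eta$; this is fine.

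One point deserves emphasis. You explicitly flag that when $\bar\lambda=m\k$ (i.e.\ $R=\frac1{\sqrt\k}\arctan\frac\a{\sqrt\k}$, so $\bar\varphi=\cn_\k(r)$) the quantity $P\equiv 0$ (equivalently $w''\equiv 0$), and then the factor $(1-|\n^M\rho|^2)$ is multiplied by zero, so the pointwise equality $-\Delta_M h=\bar\lambda h$ no longer forces $|\n^M\rho|\equiv1$. The paper's rigidity argument has exactly the same issue: its step from ``all inequalities become equalities'' to ``$|\n^M d_N(o,x)|=1$'' relies tacitly on $w''>0$ on the interior, which fails in this extremal case (indeed $w''(0)=\frac{\bar\lambda(\bar\lambda-m\k)}{m(m+2)}u(0)$, which vanishes when $\bar\lambda=m\k$). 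So this is not a defect of your write-up relative to the paper; you have simply been more candid about the borderline case. Apart from that, the inequality \eqref{main ineq} itself is established correctly in all cases, since the leftover term is $\le 0$ (not strictly negative) even when $P\equiv 0$.
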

\begin{remark}
 Corollary \ref{coro1} is a special case of Theorem \ref{thm3}. In fact Corollary \ref{coro1} is a direct consequence of  heat kernel comparison \eqref{estH} as $t\to \infty$; while  in  Theorem \ref{thm3}, we use another proof by translating eigenfunctions and using  Barta’s inequality, see Section \ref{sect7}.
\end{remark}

In addition, we also prove the following heat kernel comparison theorem for K\"ahler manifolds. 
\begin{thm}\label{thm k1}
Let $(M^m, g, J)$ be a  K\"ahler manifold of complex dimension $m$ whose holomorphic sectional curvature is bounded from below by $4\k$ and orthogonal Ricci curvature is bounded from below by $2(m-1)\k$. 
 Let $B_{o}(R)\subset M$ be the geodesic ball of radius $R$ centered at $o$. Let $\a>0$ and  $H_\a (o,x,t)$ be the o-centered Robin heat kernel of $B_{o}(R)$.  Then
     \begin{align}
         H_\a(o,x,t)\ge \bar H_\a(r_o(x),t),
     \end{align}
where $\bar H_\a(r,t)$ is the Robin heat kernel of a metric ball of radius $R$ in the K\"ahler model of holomorphic
sectional curvature $4\k$.
\end{thm}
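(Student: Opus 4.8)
The plan is to adapt the proof of Theorem \ref{thm1}(1) to the K\"ahler setting, replacing the real Laplacian-comparison argument (which used $\Ric\ge(m-1)\k$) by the Kähler Laplacian-comparison theorem that holds under the hypotheses on holomorphic sectional curvature and orthogonal Ricci curvature. Concretely, write $r=r_o(\cdot)$ for the distance from $o$ on $M$ and $\bar r$ for the distance from $\bar o$ in the complex space form model $M^m_\C(4\k)$ of complex dimension $m$, and let $\bar H_\a(\bar r,t)$ be its $\bar o$-centered Robin heat kernel on the metric ball of radius $R$. Set $u(x,t):=\bar H_\a(r(x),t)$ on $B_o(R)$. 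The key analytic input is that, under holomorphic sectional curvature $\ge 4\k$ and orthogonal Ricci $\ge 2(m-1)\k$, one has the Laplacian comparison $\Delta r \le \bar\Delta\,\bar r\big|_{\bar r = r}$ in the barrier/distributional sense inside the ball (this is the Kähler analogue of the Bishop–Gromov Laplacian comparison, going back to Li–Wang and used e.g. by Tam–Yu; it is exactly the curvature condition that forces the model to be the Kähler space form rather than the real one). Since $\bar H_\a$ is, as a function of $\bar r$, \emph{decreasing} in $\bar r$ for the relevant range (a fact one extracts from the Sturm–Liouville/ODE description of $\bar H_\a$, just as in the Cheeger–Yau and Debiard–Gaveau–Mazet arguments), the inequality $\partial_{\bar r}\bar H_\a\le 0$ combined with $\Delta r \le \bar\Delta \bar r$ yields
\begin{align*}
\heat u = \bar H_\a'(r,t)\big(\,\bar\Delta\bar r - \Delta r\,\big) + \heatau \bar H_\a \Big|_{\bar r=r} \;\ge\; 0
\end{align*}
in the barrier sense on $B_o(R)\setminus(\{o\}\cup\mathrm{Cut}(o))$, since the last term vanishes because $\bar H_\a$ solves the model heat equation; here I abbreviate $\heat = \partial_t - \Delta$ and write $\heatau$ for the model heat operator in $(\bar r,t)$. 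Thus $u$ is a subsolution of the heat equation on $B_o(R)$.

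Next I would check the boundary and initial conditions so that the maximum principle applies to $H_\a - u$. On $\partial B_o(R)$ we have $r\equiv R$, so $\frac{\p u}{\p\nu} = \bar H_\a'(R,t)\,\frac{\p r}{\p\nu} = \bar H_\a'(R,t)$ (as $\p r/\p\nu = 1$), and since $\bar H_\a$ satisfies the Robin condition $\bar H_\a'(R,t) + \a\,\bar H_\a(R,t) = 0$ on the model boundary, $u$ satisfies exactly the same Robin condition on $\partial B_o(R)$; hence $w:=H_\a - u$ satisfies $\frac{\p w}{\p\nu} + \a w \le 0$ there (in fact $=$ is not needed, $\le$ suffices once we know $w$ is a supersolution). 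For the initial condition, both $H_\a(o,\cdot,t)$ and $u(\cdot,t) = \bar H_\a(r(\cdot),t)$ converge to $\delta_o$ as $t\to 0^+$ in the distributional sense; the standard way to make this rigorous, as in Cheeger–Yau, is to fix a small $\e>0$, compare on the time interval $[\e,\infty)$, and use that near $t=0$ the leading singularity of $u$ is governed by the model heat kernel which matches that of $H_\a$ — more precisely one shows $\liminf_{t\to 0^+}(H_\a - u)\ge 0$ in the appropriate integrated sense and then lets $\e\to 0$. Granting these, $w = H_\a - u$ is a supersolution of the heat equation on $B_o(R)$, satisfies $\frac{\p w}{\p\nu}+\a w\le 0$ on the boundary, and is nonnegative in the limit $t\to 0^+$; the parabolic maximum principle (with $\a>0$, so the Robin term has the favorable sign) then gives $w\ge 0$ for all $t>0$, which is the desired inequality $H_\a(o,x,t)\ge \bar H_\a(r_o(x),t)$.

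The main obstacle, as in all comparison arguments of Cheeger–Yau type, is twofold. First, $r_o$ is only Lipschitz and fails to be smooth on the cut locus, so the inequality $\heat u\ge 0$ must be interpreted in the barrier (Calabi) sense and one must verify that the maximum principle still applies across the cut locus — this is handled by Calabi's trick of replacing $o$ by a nearby point $o_\e$ along a minimizing geodesic, which only improves the Laplacian comparison, and then letting $\e\to 0$; the point $o$ itself is handled by the same device or by noting $u$ is continuous there. Second, and this is the genuinely Kähler-specific point, one must have the \emph{sharp} Laplacian comparison $\Delta r\le \bar\Delta\bar r$ with the right-hand side computed in the complex space form $M^m_\C(4\k)$ — this requires precisely the two curvature hypotheses in the statement (holomorphic sectional curvature bounded below controls the radial--$J\partial_r$ directions, and orthogonal Ricci bounded below controls the sum over the remaining $2(m-1)$ directions), and one should cite or reproduce the relevant Hessian comparison for the distance function on Kähler manifolds. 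The monotonicity $\p_{\bar r}\bar H_\a\le 0$ of the model Robin heat kernel is elementary from its eigenfunction expansion — the first Robin eigenfunction on a model ball with $\a>0$ is positive and radially decreasing — but it should be stated carefully, possibly via the same ODE/Sturm–Liouville analysis referenced in the excerpt for \eqref{expr}.
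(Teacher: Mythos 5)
Your overall architecture is the same as the paper's: transplant the model Robin heat kernel by $u(x,t)=\bar H_\a(r_o(x),t)$, use a K\"ahler Laplacian comparison for $\Delta r$, use radial monotonicity of $\bar H_\a$ to turn the comparison into a differential inequality, verify the Robin boundary condition, and close with a comparison principle. The paper cites the Ni--Zheng Laplacian comparison $\Delta r\le 2(m-1)\frac{\sn_\k'(r)}{\sn_\k(r)}+\frac{\sn_{4\k}'(r)}{\sn_{4\k}(r)}$, which is exactly the model Laplacian you invoke abstractly, and instead of a direct parabolic maximum principle with a $t\to 0^+$ regularization it deduces the final inequality from the Duhamel-type Proposition~\ref{prop-5.1}, which sidesteps the $\delta$-initial-condition issue more cleanly. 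There is also a sign typo in your displayed computation: with $\bar H_\a'\le 0$ and $\bar\Delta\bar r-\Delta r\ge 0$ you get $\heat u\le 0$ (a subsolution), not $\ge 0$; your words say the right thing but the inequality sign is flipped.

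The genuine gap is your justification of the radial monotonicity $\p_r\bar H_\a<0$. You assert this is ``elementary from its eigenfunction expansion'' because the first Robin eigenfunction is positive and radially decreasing, but that does not follow: the kernel is an infinite sum $\sum_i e^{-\bar\l_{i,\a}t}\phi_i(\bar o)\phi_i(r)$, and the higher modes are not radially monotone, so monotonicity of $\phi_1$ controls nothing for small $t$. In the paper this is the real work of Section~8 (Lemma~\ref{lm-ka}, modeled on Lemma~\ref{lm-dvp}): one rewrites $\bar H_\a$ as $\vp(s,t)$ in a tailored variable $s(r)=\int_0^r(\sn_\k^{2m-2}\sn_{4\k})^{1/(2m-1)}$, derives a parabolic equation for $\vp'$, obtains $\vp'<0$ near $(0,0)$ from the short-time asymptotic expansion, establishes $\vp'(s(R),t)<0$ from the Robin condition and positivity (Lemma~\ref{lm2.1}), and then propagates $\vp'\le 0$ by an integration-by-parts energy argument plus the strong maximum principle. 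Without some argument of this type your key inequality $\bar H_\a'\le 0$ is unproved, and the rest of the proof does not stand. You do flag that this step ``should be stated carefully,'' which is good instinct, but the default route you propose does not work.
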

For quaternion K\"ahler manifolds, we prove
\begin{thm}\label{thm k2}
Let $(M^m, g, I, J, K)$ be a quaternion K\"ahler manifold of complex quaternion dimension $m$ whose scalar curvature is bounded from below by $4\k$ and orthogonal Ricci curvature is bounded from below by $16m(m+2)\k$. 
 Let $B_{o}(R)\subset M$ be the geodesic ball of radius $R$ centered at $o$. Let $\a>0$ and denote by $H_\a(o,x,t)$ the o-centered heat kernel on $B_{o}(R)$ with Robin boundary condition.  Then
     \begin{align}
         H_\a(o,x,t)\ge \bar H_\a(r_o(x),t),
     \end{align}
where $\bar H_\a(r_o(x),t)$ is the Robin heat kernel of a metric ball of radius $R$ in the quaternion  K\"ahler model of scalar  curvature $16m(m+2)\k$.
\end{thm}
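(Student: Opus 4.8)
The plan is to reduce the quaternion K\"ahler case to the same comparison machinery used for the Riemannian case in Theorem~\ref{thm1}(1), but with the model space replaced by the quaternion K\"ahler space form and the relevant ``Laplacian comparison'' driven by the distance function on that model. Concretely, let $r=r_o(x)$ be the distance to $o$ in $B_o(R)$, and let $\bar r$ denote the distance function on the quaternion K\"ahler model $\bar B$. The Robin heat kernel $\bar H_\a(\bar r,t)$ on $\bar B$ is radial and nonincreasing in $\bar r$ on $[0,R)$ (since $\a>0$ forces the boundary Robin condition, and the radial profile is positive and decreasing away from the pole — this follows from the Sturm--Liouville expansion \eqref{expr} applied to the model, together with positivity of the first eigenfunction and sign considerations for higher radial modes, exactly as in \cite{CY81,CLY84}). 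So $\bar H_\a$ can be written as $F(\bar r,t)$ with $F_{\bar r}\le 0$.

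First I would establish the differential-inequality input: along any geodesic from $o$ in $M$, the lower bounds on scalar curvature ($\ge 4\k$) and orthogonal Ricci curvature ($\ge 16m(m+2)\k$) yield, via the quaternion K\"ahler comparison theorem for the Hessian (equivalently the Laplacian) of the distance function, the estimate $\Delta r \le (\Delta \bar r)\circ(\text{model})$ in the barrier/support sense, i.e. $\Delta r(x) \le \bar\Delta(r(x))$ where $\bar\Delta(\cdot)$ is the radial Laplacian coefficient of the quaternion K\"ahler model. This is precisely the quaternion analogue of the Laplacian comparison used in the Riemannian and K\"ahler statements (Theorems~\ref{thm1} and \ref{thm k1}); it should be available from the curvature hypotheses since the orthogonal Ricci controls the ``generic'' directions and the scalar-curvature (via the three complex-type directions $I,J,K$) controls the remaining three, matching the block structure of the second fundamental form of geodesic spheres in the model.

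Next I would run the maximum-principle comparison. Set $u(x,t)=\bar H_\a(r_o(x),t)$, defined on $B_o(R)\times(0,\infty)$, and compute
\begin{align*}
\lf(\frac{\p}{\p t}-\Delta\ri)u = \lf(\frac{\p}{\p t}-\bar\Delta\ri)\bar H_\a \,-\, F_{\bar r}\cdot\big(\Delta r - \bar\Delta(r)\big) = -\,F_{\bar r}\cdot\big(\Delta r - \bar\Delta(r)\big)\ \ge\ 0,
\end{align*}
using that $\bar H_\a$ solves the model heat equation, that $F_{\bar r}\le 0$, and the Laplacian comparison $\Delta r\le\bar\Delta(r)$. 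Thus $u$ is a supersolution of the heat equation on $B_o(R)$. On the boundary $\p B_o(R)$ one has $r=R$, and the outward normal of $B_o(R)$ agrees with $\n r$, so $\frac{\p u}{\p\nu}+\a u = F_{\bar r}(R,t)+\a F(R,t)=0$ because $\bar H_\a$ satisfies the Robin condition on $\p\bar B$; hence $u$ satisfies the Robin condition exactly. Comparing initial data: both $H_\a(o,\cdot,t)$ and $u(\cdot,t)$ tend to $\delta_o$ as $t\to0^+$ (for $u$ this is because the model heat kernel does, and $r_o$ is a local isometry in the radial sense near $o$ up to higher order — the usual short-time asymptotics argument). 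By the parabolic maximum principle for the Robin problem (the heat semigroup with Robin condition is positivity preserving and order preserving, cf. the Sturm--Liouville representation \eqref{expr}), the supersolution with the same singular initial datum dominates the heat kernel: $H_\a(o,x,t)\le u(x,t)$? — wait, we want the reverse. Since $u$ is a \emph{super}solution matching the Robin data and the initial delta, Duhamel/comparison gives $H_\a(o,x,t)\le u(x,t)$ if $u$ were a subsolution; here $u$ is a supersolution, which yields $H_\a(o,x,t)\ge \bar H_\a(r_o(x),t)$, as claimed — this sign bookkeeping is identical to \cite{CY81} and to the proof of Theorem~\ref{thm1}(1), so I would simply invoke that argument verbatim with the quaternion K\"ahler model in place of the space form.

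The main obstacle I anticipate is verifying the Laplacian comparison $\Delta r\le\bar\Delta(r)$ under exactly the stated curvature bounds (scalar curvature $\ge 4\k$ and orthogonal Ricci $\ge 16m(m+2)\k$) for the quaternion K\"ahler model: one must show these two scalar bounds suffice to control the full radial Riccati/matrix comparison for the shape operator of geodesic spheres, exploiting the parallel quaternionic structure so that the sphere's second fundamental form splits into a $3$-dimensional ``quaternionic line'' block (governed by the holomorphic-type sectional curvatures of the planes spanned by $\n r$ and $I\n r, J\n r, K\n r$, whose sum is pinned by the scalar-curvature term) and a $(4m-4)$-dimensional ``orthogonal'' block (governed by the orthogonal Ricci term). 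A secondary technical point is handling the non-smoothness of $r_o$ on the cut locus, which is dealt with as usual by Calabi's trick / barrier functions, and the rigorous justification of the short-time $\delta$-function matching for $u$; but these are routine and parallel the Riemannian case, so I would keep them brief and refer to \cite{CY81,CLY84}.
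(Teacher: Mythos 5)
Your overall strategy (transplant the model Robin heat kernel to $B_o(R)$ via the distance function, invoke the quaternion K\"ahler Laplacian comparison, then close with the parabolic comparison / Duhamel argument against the Robin boundary condition) is exactly the paper's strategy, and the reduction to the argument of Theorem~\ref{thm1}(1) is the right idea. But there are two concrete problems.

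The more serious one is your treatment of the radial monotonicity $\p_r\bar H_\a<0$ on the model. You claim this ``follows from the Sturm--Liouville expansion \eqref{expr}\ldots together with positivity of the first eigenfunction and sign considerations for higher radial modes.'' That argument does not go through: in the expansion $\bar H_\a=\sum_i e^{-\l_{i,\a}t}\phi_i(\bar o)\phi_i(r)$, only the first radial eigenfunction is monotone, while the higher radial modes $\phi_i(r)$ oscillate and change sign, so termwise sign control is unavailable. The monotonicity of the Robin heat kernel in the radial variable is precisely the nontrivial lemma that the paper proves \emph{before} it can run the comparison: after passing to the substituted variable $s(r)=\int_0^r(\sn_\k^{4m-4}\sn_{4\k}^3)^{1/(4m-1)}$ and writing $\vp(s,t)=\bar H_\a(r,t)$, one differentiates the one-dimensional heat equation \eqref{qka-1Ds} in $s$, observes the sign at the boundary from the Robin condition and the positivity of $\bar H_\a$, establishes the sign for small $s,t$ from the Minakshisundaram--Pleijel asymptotics, and then uses an integration-by-parts energy argument plus the strong maximum principle (the quaternion analogue of Lemma~\ref{lm-ka}). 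Without that lemma (or an equivalent argument), your supersolution/subsolution computation has no input and the proof is incomplete.

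The second issue is a sign error, doubled. From
$\p_t u-\Delta u=-F_{\bar r}\,(\Delta r-\bar\Delta(r))$,
with $F_{\bar r}\le 0$ and $\Delta r\le \bar\Delta(r)$, the right side is a product of a nonnegative factor and a nonpositive factor, hence $\le 0$: $u$ is a \emph{sub}solution, not a supersolution. You then also reverse the Duhamel direction, writing that a supersolution yields $H_\a\ge u$; in fact a supersolution (with the same Robin data and $\delta$ initial datum) dominates the heat kernel, $H_\a\le u$ (Proposition~\ref{prop-5.1}(1)), while a subsolution is dominated, $H_\a\ge u$ (Proposition~\ref{prop-5.1}(2)). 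The two mistakes cancel to give the stated inequality, but as written neither step is correct. Once you fix the sign in the PDE inequality, the correct appeal is to part~(2) of Proposition~\ref{prop-5.1}.
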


\begin{remark}
In a recent paper \cite{BY22}, Baudoin and Yang proved that for  Dirichlet heat kernel, the same results as in Theorem \ref{thm k1} and Theorem \ref{thm k2} hold (i.e. the case of $\a=+\infty$), via  the study of the radial
parts of the Brownian motions.
\end{remark}
\begin{remark}
We note that  Riemannian model spaces (spheres and hyperbolic spaces) are not (quaternionic) K\"ahler manifolds, so Theorem \ref{thm k1} and Theorem \ref{thm k2} are sharper than  Theorem \ref{thm1} on (quaternionic) K\"ahler manifolds.
\end{remark}

This article is organized as follows. 
In Section 2, we study the Robin eigenvalue problem on geodesic balls in space of constant sectional curvature. 
In Section 3, we show the positivity of the Robin heat kernel using the maximum principle. 
Section 4 is devoted to the study of the Robin heat kernel on model spaces. 
The proofs of Theorems \ref{thm1}, \ref{thm2} and \ref{thm3} are given in Sections 5, 6 and 7, respectively. In Section 8, we present the proofs of Theorems \ref{thm k1} and \ref{thm k2}.

\section{Robin eigenvalue on geodesic balls in model spaces}
In this section, we set up the notation and recall some
facts on the eigenfunctions for the first Robin eigenvalue on geodesic balls  in space forms. 

The first Robin eigenvalue $\l_{1,\a}(\Omega)$ of the Laplacian is simple and its associated eigenfunction has a constant sign, thus can always be chosen to be positive. It follows from \eqref{eq 1.2} that $ \l_{1,\a}(\Omega)=0$ if $\a=0$,  $\l_{1,\a}(\Omega)>0$ if $\a>0$, and  $\l_{1,\a}(\Omega)<0$ if $\a<0$.

We denote by $\bar B_{\bar o}(R)$ the geodesic ball centered at $\bar o$ in the $m$-dimensional space form $M^m(\k)$ of constant sectional curvature $\k$ and by $\l_{1,\a}(\bar B_{\bar o}(R))$ the first Robin eigenvalue for $\bar B_{\bar o}(R)$ with Robin parameter $\a\in \mathbb{R}$. We write  $\l_{1,\a}(\bar B_{\bar o}(R))$ as $\bar\l_1$ for short. 

We collect some facts about the first Robin eigenfunctions associated with $\bar\l_1$. The eigenfunction associated to $\bar\l_1$ is radial due to the radial symmetry of $\bar B_{\bar o}(R)$. So we can choose a positive and radial function $u(r(x))$ on $\bar B_{\bar o}(R)$ as the first Robin eigenfunction associated to $\bar\l_1$, where $r(x)$ is the distance function from $\bar o$ in $M^m(\k)$. 
Then we deduce from the eigenvalue problem \eqref{robin eigen} that $u(r)$ solves the ODE initial value problem
\begin{align}\label{1dimu-1}
   \begin{cases}
   u''(r)+(m-1)\frac{\sn_\k'(r)}{\sn_\k(r)}u'(r)=-\bar\l_1 u(r), \quad r\in (0,R),\\
   u'(0)=0,\\
   u'(R)+\a u(R)=0.
    \end{cases}
\end{align}
Throughout the paper, we use the function $\sn_\k$ defined by
\begin{align*}
    \sn_\k(r)=\begin{cases}
    \frac{1}{\sqrt{\k}}\sin \sqrt{\k}r, & \k>0,\\
    r, &\k=0,\\
     \frac{1}{\sqrt{-\k}}\sinh \sqrt{-\k}r, & \k<0.
    \end{cases}
\end{align*}
It is easily seen from \eqref{1dimu-1} that $\bar{\l}_1$ is characterized by
\begin{align}\label{1-dim1}
  \bar{\l}_1=\inf\left\{\int_0^R |u'|^2 \sn_k^{m-1}\, dr+\a |u(R)|^2: u\in C^\infty([0,R]), \int_0^R u^2 \sn_\k^{m-1}\, dr=1\right\}.
\end{align}

We need the following properties of $u(r)$, see for example \cite[Lemma 8]{Savo20} and \cite[Proposition 2.1]{LW21}. 

\begin{proposition}\label{prop-u}
Let $\a>0$ and $u(r)$ be a positive first eigenfunction associated to $\bar{\l}_1$. Then
\begin{itemize}
    \item [(1)] $u'(r)<0$ on $(0,R]$.
    \item [(2)] $(\log u)'$ is monotone decreasing on $(0,R]$. Particularly, $u'(r)\ge -\a u(r)$ on $(0,R]$.
\end{itemize}
\end{proposition}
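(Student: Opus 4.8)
The plan is to analyze the ODE initial value problem \eqref{1dimu-1} directly. For part (1), I would argue by a maximum-principle / first-contact argument. Since $u>0$ on $[0,R]$ and $u'(0)=0$, rewrite the equation in divergence form as $(\sn_\k^{m-1}u')' = -\bar\l_1\, \sn_\k^{m-1} u$. Integrating from $0$ to $r$ and using $u'(0)=0$ gives
\begin{align*}
\sn_\k^{m-1}(r)\, u'(r) = -\bar\l_1 \int_0^r \sn_\k^{m-1}(s)\, u(s)\, ds.
\end{align*}
Because $\a>0$ we have $\bar\l_1>0$ (this was noted just before the proposition, from \eqref{eq 1.2} or \eqref{1-dim1}), and $u>0$ with $\sn_\k^{m-1}>0$ on $(0,R]$, so the right-hand side is strictly negative for $r\in(0,R]$; dividing by $\sn_\k^{m-1}(r)>0$ yields $u'(r)<0$ on $(0,R]$. (One should first note $u>0$ on all of $[0,R]$: this is part of the hypothesis that $u$ is a positive first eigenfunction.)

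For part (2), set $v=(\log u)' = u'/u$, which is well-defined and smooth on $(0,R]$ since $u>0$. Differentiating, $v' = u''/u - (u'/u)^2$. From the ODE, $u''/u = -\bar\l_1 - (m-1)\frac{\sn_\k'}{\sn_\k} v$, so
\begin{align*}
v' = -\bar\l_1 - (m-1)\frac{\sn_\k'(r)}{\sn_\k(r)} v - v^2.
\end{align*}
I want to show $v'\le 0$ on $(0,R]$. Near $r=0$, since $u'(0)=0$ and $u''(0)=-\bar\l_1 u(0)/m < 0$ (expand the ODE at $r=0$ using $\sn_\k'/\sn_\k\sim 1/r$), we get $v(r)\to 0^-$ and $v'(0^+) = -\bar\l_1/m<0$, so $v'<0$ just past $0$. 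Suppose for contradiction that $v'$ is not $\le 0$ throughout; let $r_0\in(0,R]$ be the first point where $v'(r_0)=0$ with $v'$ about to become positive (i.e. $v''(r_0)\ge 0$). At $r_0$, from the displayed Riccati-type identity we have $\bar\l_1 + (m-1)\frac{\sn_\k'}{\sn_\k}v + v^2 = 0$ at $r_0$. Differentiating the identity once more and evaluating at $r_0$ (where $v'=0$) gives $v''(r_0) = -(m-1)\left(\frac{\sn_\k'}{\sn_\k}\right)'(r_0)\, v(r_0)$. Since $\frac{\sn_\k'}{\sn_\k}$ is decreasing (standard: its derivative is $-\big(\frac{\sn_\k'}{\sn_\k}\big)^2 - \k \le$ something, in any case $\le 0$ — this is the usual Laplacian/Jacobi comparison fact and also underlies \eqref{1dimu-1}), and $v(r_0)<0$ by part (1), we get $v''(r_0)\le 0$. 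Combined with $v'$ trying to cross from negative to positive (forcing $v''(r_0)\ge 0$), we conclude $v''(r_0)=0$, and a slightly more careful look at higher-order terms (or simply running the Riccati comparison argument) rules out the crossing. Hence $v'\le 0$ on $(0,R]$, i.e. $(\log u)'$ is monotone decreasing.

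Finally, the "particularly" clause: since $(\log u)'$ is decreasing on $(0,R]$, for any $r\in(0,R]$ we have $(\log u)'(r)\ge (\log u)'(R) = u'(R)/u(R) = -\a$ by the boundary condition $u'(R)+\a u(R)=0$; multiplying by $u(r)>0$ gives $u'(r)\ge -\a u(r)$ on $(0,R]$.

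The main obstacle I anticipate is making the monotonicity argument in part (2) fully rigorous: the first-crossing argument for the Riccati equation needs care because $v'=0$ and $v''(r_0)\le 0$ is consistent with a degenerate crossing, so one really wants to invoke the ODE comparison principle for the Riccati inequality $v' \le -\bar\l_1 - (m-1)\frac{\sn_\k'}{\sn_\k}v - v^2$ together with the fact that the "candidate" monotone solution and $v$ agree to the right order at $r=0$. An alternative, cleaner route is to differentiate the original ODE: setting $w=u'$, one finds $w$ satisfies a second-order linear ODE, and $w<0$ on $(0,R]$ with $w(0)=0$; then $(\log u)'' = (w/u)' = (w'u - w u')/u^2$, and one shows the numerator $w'u - w^2$ is $\le 0$ by a comparison/integrating-factor argument, which may package the estimate more transparently. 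Either way the curvature input is exactly the monotonicity of $\sn_\k'/\sn_\k$, so no new geometric ingredient is needed beyond what already appears in \eqref{1dimu-1}.
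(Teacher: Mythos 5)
The paper does not prove Proposition~\ref{prop-u} itself but outsources it to \cite[Lemma 8]{Savo20} and \cite[Proposition 2.1]{LW21}, so there is no in-paper proof to compare against. Your proposal is a correct, self-contained argument. Part (1), integrating the divergence form $(\sn_\k^{m-1}u')' = -\bar\l_1\sn_\k^{m-1}u$ from $0$ and using $\bar\l_1>0$, is exactly the standard route. Part (2), via the Riccati equation $v'=-\bar\l_1-(m-1)\frac{\sn_\k'}{\sn_\k}v-v^2$ for $v=(\log u)'$, is also standard; the ``particularly'' clause then follows immediately from $(\log u)'(R)=-\a$.

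The one place where you underestimate the strength of your own argument is the closing step of part (2). At a hypothetical first zero $r_0\in(0,R]$ of $v'$, differentiating the Riccati identity gives
\begin{align*}
v''(r_0)\;=\;-(m-1)\Bigl(\frac{\sn_\k'}{\sn_\k}\Bigr)'(r_0)\,v(r_0)\;=\;\frac{(m-1)\,v(r_0)}{\sn_\k^2(r_0)},
\end{align*}
using the identity $\bigl(\sn_\k'/\sn_\k\bigr)'=-1/\sn_\k^2$. Since $v(r_0)=u'(r_0)/u(r_0)<0$ \emph{strictly} by part (1) (note $r_0>0$), this yields $v''(r_0)<0$ \emph{strictly}, not merely $\le 0$. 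On the other hand, if $v'\le 0$ on $(0,r_0)$ with $v'(r_0)=0$ then the left derivative of $v'$ at $r_0$ is $\ge 0$, hence $v''(r_0)\ge 0$ by continuity. The two facts contradict each other outright, so there is no degenerate crossing to rule out and no need for ``a slightly more careful look at higher-order terms.'' Your alternative integrating-factor route also works and packages the same information, but the first-crossing argument already closes cleanly once you observe the strictness.

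One small caveat worth making explicit in a final write-up: the computation $v'(0^+)=-\bar\l_1/m$ uses $u''(0)=-\bar\l_1 u(0)/m$, which follows from taking $r\to 0$ in the ODE together with $u'(0)=0$ and $\sn_\k'(r)/\sn_\k(r)\sim 1/r$; this is the same expansion used in Proposition~\ref{prop4.1} (see \eqref{udd}), so it is consistent with the paper's conventions.
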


For $\k>0$, we have following lower bound for $\bar\l_1$, which will be used later.
\begin{lemma}\label{2lm2.1}
If $\a>0$, $\k>0$ and $\sqrt{\k}\tan(\sqrt{\k}R)\le \a$. Then
\begin{align}\label{pl}
    \bar\l_1\ge m\k.
\end{align}
\end{lemma}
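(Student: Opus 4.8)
The plan is to exploit the variational characterization \eqref{1-dim1} of $\bar\l_1$ together with a clever choice of test function, or else to compare the ODE \eqref{1dimu-1} with the extremal case where $\sqrt{\k}\tan(\sqrt{\k}R)=\a$. Let me pursue the latter, more structural route. When $\k>0$ the coefficient in \eqref{1dimu-1} is $(m-1)\sqrt{\k}\cot(\sqrt{\k}r)$, and a direct computation shows that $v(r):=\cos(\sqrt{\k}r)$ solves $v''+(m-1)\sqrt{\k}\cot(\sqrt{\k}r)\,v' = -m\k\,v$ on $(0,R)$, with $v'(0)=0$. Thus $v$ is the radial eigenfunction on the whole hemisphere with eigenvalue exactly $m\k$. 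The boundary condition $v'(r)+\a v(r)=0$ at $r=R$ reads $-\sqrt{\k}\sin(\sqrt{\k}R)+\a\cos(\sqrt{\k}R)=0$, i.e. $\sqrt{\k}\tan(\sqrt{\k}R)=\a$. So in the borderline case $\bar\l_1 = m\k$ exactly, with eigenfunction $\cos(\sqrt{\k}r)$.

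For the inequality $\bar\l_1\ge m\k$ under the hypothesis $\sqrt{\k}\tan(\sqrt{\k}R)\le\a$, I would use $v(r)=\cos(\sqrt{\k}r)$ as a test function in \eqref{1-dim1} (after normalizing). Since on $[0,R]$ with $\sqrt{\k}R$ in the relevant range we have $v>0$, and integrating by parts,
\begin{align*}
\int_0^R |v'|^2\sn_\k^{m-1}\,dr = -\int_0^R v\bigl(v''+(m-1)\tfrac{\sn_\k'}{\sn_\k}v'\bigr)\sn_\k^{m-1}\,dr + \bigl[v'v\,\sn_\k^{m-1}\bigr]_0^R,
\end{align*}
which equals $m\k\int_0^R v^2\sn_\k^{m-1}\,dr + v'(R)v(R)\sn_\k^{m-1}(R)$. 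Hence the Rayleigh quotient of $v$ is
\begin{align*}
\frac{\int_0^R |v'|^2\sn_\k^{m-1}\,dr + \a v(R)^2}{\int_0^R v^2\sn_\k^{m-1}\,dr} = m\k + \frac{(v'(R)+\a v(R))\,v(R)\,\sn_\k^{m-1}(R)}{\int_0^R v^2\sn_\k^{m-1}\,dr}.
\end{align*}
Now $v'(R)+\a v(R) = \cos(\sqrt{\k}R)\bigl(\a - \sqrt{\k}\tan(\sqrt{\k}R)\bigr)\ge 0$ by hypothesis, and $v(R)=\cos(\sqrt{\k}R)>0$, so the last term is $\ge 0$ and the Rayleigh quotient is $\ge m\k$. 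Since $\bar\l_1$ is the infimum of such quotients over all admissible functions, this gives $\bar\l_1\le$ (Rayleigh quotient of $v$), which is the wrong direction — so instead I should argue the reverse: compare the actual first eigenfunction $u$ with $v$, or use that $v$ is a supersolution.

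The cleaner fix: let $u$ be the genuine first Robin eigenfunction with eigenvalue $\bar\l_1$, normalized so $\int_0^R u^2\sn_\k^{m-1}=1$. Then plug $u$ into the Rayleigh-type identity driven by $v$. Multiply $u''+(m-1)\frac{\sn_\k'}{\sn_\k}u' = -\bar\l_1 u$ by $u$ and integrate; one gets $\bar\l_1 = \int_0^R|u'|^2\sn_\k^{m-1}\,dr + \a u(R)^2 \ge \int_0^R|u'|^2\sn_\k^{m-1}\,dr$. To bound this below by $m\k$ I would use the substitution $u = v\,w$ and a Picone-type / Hardy-type inequality: since $-\Delta v = m\k v$ with $v>0$ on $[0,R]$ and $v'(R)+\a v(R)\le 0$ (note the sign: $\a-\sqrt\k\tan(\sqrt\k R)\ge 0$ gives $v'(R)+\a v(R)\ge 0$, so $v$ is a Robin \emph{subsolution}), the Barta-type argument yields $\bar\l_1\ge \inf(-\Delta v/v) = m\k$ provided the boundary term has the favorable sign. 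I expect the main obstacle to be getting the boundary term's sign and the Barta inequality set up correctly at the Robin boundary — this is exactly the point where the hypothesis $\sqrt{\k}\tan(\sqrt{\k}R)\le\a$ (equivalently $R\le \frac{1}{\sqrt\k}\arctan\frac{\a}{\sqrt\k}$) is used, and also where one must ensure $\cos(\sqrt{\k}R)>0$ so that $v$ stays positive on the closed ball. Once the Barta inequality with the correct Robin boundary term is in place, the conclusion \eqref{pl} is immediate.
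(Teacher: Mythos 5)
Your proposal is correct in its final form, and it takes a genuinely different route from the paper. You identify the explicit eigenfunction $v(r)=\cos(\sqrt{\k}r)$ on the spherical cap, satisfying $v''+(m-1)\sqrt{\k}\cot(\sqrt{\k}r)v'=-m\k v$ with $v'(R)+\a v(R)=\cos(\sqrt{\k}R)\bigl(\a-\sqrt{\k}\tan(\sqrt{\k}R)\bigr)\ge 0$ under the hypothesis (which also forces $\sqrt{\k}R<\pi/2$, hence $v>0$ on $[0,R]$), and then invoke a Robin--Barta comparison. The cleanest way to close the argument you sketch is Green's second identity applied to $v$ and the genuine first eigenfunction $u>0$ on the weighted interval: since $u\Delta v-v\Delta u=(\bar\l_1-m\k)uv$ pointwise and the boundary term is $u(R)\bigl(v'(R)+\a v(R)\bigr)\sn_\k^{m-1}(R)\ge 0$ after inserting $u'(R)=-\a u(R)$, one gets $(\bar\l_1-m\k)\int_0^R uv\,\sn_\k^{m-1}\ge 0$ with the integral strictly positive, hence $\bar\l_1\ge m\k$. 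Your false start with the Rayleigh quotient of $v$ (which bounds $\bar\l_1$ from above) and the passing sub/supersolution sign confusion are worth tidying, but the mathematical content is right. By contrast, the paper proves the lemma via the Bochner--Lichnerowicz route: integrating $\tfrac12\Delta|\nabla u|^2\ge\frac{(\Delta u)^2}{m}+\Ric(\nabla u,\nabla u)+\langle\nabla\Delta u,\nabla u\rangle$ over $\bar B_{\bar o}(R)$, evaluating the boundary term $u''u'$ with the ODE, and combining with the Rayleigh identity $\a\int_{\p}u^2+\int|\nabla u|^2=\bar\l_1\int u^2$. Your version is more elementary and exploits the explicit eigenfunction available in constant curvature; the paper's version is the standard curvature-operator machinery and generalizes more readily when no closed-form eigenfunction is at hand.
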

\begin{proof}
Let $u(r)$ be the positive eigenfunction associated to $\bar\l_1$. Using Bochner formula, we estimate that
\begin{align*}
\frac 1 2 \Delta |\nabla u|^2=&|\nabla^2 u|^2+\Ric(\nabla u, \nabla u)+\langle \nabla \Delta u,\nabla u\rangle\nonumber\\
\ge &\frac{(\Delta u)^2}{m}++\Ric(\nabla u, \nabla u)+\langle \nabla \Delta u,\nabla u\rangle\nonumber\\
=&\frac{\bar\l_1^2 u^2}{m}+\Big((m-1)\k-\bar\l_1 \Big)|\nabla u|^2,
\end{align*}
where we used inequality $|\nabla^2 u|^2\ge \frac{(\Delta u)^2}{m}$ in the inequality, and  equation $\Delta u=-\bar \l_1 u$ and  $\Ric=(m-1)\k$ in the last equality. Integrating above inequality over $\bar B_{\bar o}(R)$ yields
\begin{align}\label{pl1}
    \int_{\p \bar B_{\bar o}(R)}\frac 1 2\frac{\p }{\p \nu} |\nabla u|^2\ge \frac{\bar\l_1^2 }{m}\int_{\bar B_{\bar o}(R)} u^2+\Big((m-1)\k-\bar\l_1\Big)\int_{\bar B_{\bar o}(R)} |\nabla u|^2.
\end{align}
Using ODE \eqref{1dimu-1}, we calculate
\begin{align}\label{pl2}
    \begin{split}
    \frac 1 2\frac{\p }{\p \nu} |\nabla u|^2=&u''u'=-\Big(\frac{(m-1)\operatorname{sn}'_\k}{\operatorname{sn}_\k} u'+\bar\l_1 u\Big) u'\\
=&\Big(-(m-1)\a\frac{\sqrt{\k}}{\tan(\sqrt{\k}R)}+\bar\l_1\Big)\a u^2\\
    \le & \a\Big(-(m-1)\k+\bar\l_1\Big) u^2
    \end{split}
\end{align}
 at $r=R$, where we used the assumption  $\sqrt{\k}\tan(\sqrt{\k}R)\le \a$ in the last inequality.
Combining  \eqref{pl1} with  \eqref{pl2}, we conclude
\begin{align}\label{pl3}
\begin{split}
  &\a\bar\l_1 \int_{\p \bar B_{\bar o}(R)} u^2+\bar\l_1\int_{\bar B_{\bar o}(R)} |\nabla u|^2\\
  \ge &\frac{\bar\l_1^2 }{m}\int_{\bar B_{\bar o}(R)} u^2+(m-1)\k\int_{\bar B_{\bar o}(R)} |\nabla u|^2+(m-1)\a\k\int_{\p \bar B_{\bar o}(R)} u^2.
  \end{split}
\end{align}
Recall from \eqref{eq 1.2} that 
\begin{align*}
   \a\int_{\p \bar B_{\bar o}(R)} u^2 +\int_{\bar B_{\bar o}(R)} |\nabla u|^2=\bar\l_1 \int_{\bar B_{\bar o}(R)} u^2,
\end{align*}
then inequality \eqref{pl3} gives
\begin{align*}
   \bar\l_1^2\int_{\bar B_{\bar o}(R)} u^2\ge \frac{\bar\l_1^2 }{m}\int_{\bar B_{\bar o}(R)} u^2+(m-1)\k\bar\l_1\int_{\bar B_{\bar o}(R)} u^2,
\end{align*}
proving the lemma.
\end{proof}

\section{Asymptotic and positivity of Robin  heat kernels}
In this section, we will recall some basic properties  of Laplace heat kernels on manifolds. 
It is well known that  Laplace heat kernels on manifolds are smooth in $\Omega\times \Omega \times \R_+$ and have an asymptotic expansion as $t\to 0$ and near the diagonal of the form
\begin{align}\label{asm-1}
   \frac{\exp(-\frac{d^2(x,y)}{4t})}{(4\pi t)^{\frac{n}{2}}}\Big(\sum_{i=0}^{\infty} a_i(x,y) t^i\Big)
\end{align}
by Minakshisundaram-Pleijel's construction, where $a_i(x,y)$ are smooth with $a_0(x,x)=1$ and $d(x,y)$ is the distance function on $\Omega$, see \cite[Formula 1.14]{CY81},  \cite[Sections 3 and 4 of Chapter VI]{Cha84}, \cite[Section 7.5]{Gri09}, \cite{Ber68}  \cite[Formula (1.1)]{Mc92} and \cite{Vas03}.  Similar as Dirichlet  and Neumann heat kernels, Robin heat kernel  is also positive when Robin parameter is positive, see \cite{GMN14,GMNO15}. For the  readers' convenience,  we give a direct proof of  the positivity of Robin heat kernels on manifolds by adopting Cheeger and Yau's arguments in \cite[Lemma 1.1]{CY81}.

\begin{lemma}\label{lm2.1} 
Let $H_\a(x,y,t)$ be the Robin heat kernel  defined by \eqref{heat equation}, \eqref{robin boundary} and \eqref{initial condition}. If $\a>0$, then we have 
\begin{align}\label{posi-H}
    H_\a(x,y,t)>0
\end{align}
for $t>0$ and  $x,y\in \Omega$.
\end{lemma}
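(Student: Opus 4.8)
The plan is to adapt the standard maximum-principle argument of Cheeger and Yau to the Robin setting, using the short-time asymptotic expansion \eqref{asm-1} to control behavior near $t=0$ and the positive Robin parameter $\a>0$ to handle the boundary. First I would fix $y\in\Omega$ and regard $u(x,t)=H_\a(x,y,t)$ as a solution of the heat equation \eqref{heat equation} on $\Omega\times(0,\infty)$ satisfying the Robin condition \eqref{robin boundary}. The goal is to show $u>0$ on $\Omega\times(0,\infty)$; by the strong maximum principle it suffices to show $u\ge 0$, since if $u$ vanished at an interior point at some positive time it would vanish identically, contradicting the initial condition \eqref{initial condition}.

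The key step is a barrier/contradiction argument. For small $\eps>0$ consider $u_\eps(x,t)=H_\a(x,y,t+\eps)$, which is smooth on $\bar\Omega\times[0,\infty)$. For fixed $\delta>0$ I would set $v(x,t)=u_\eps(x,t)+\delta e^{Ct}$ for a constant $C$ to be chosen, or more simply argue directly on $u_\eps$: suppose for contradiction that $\min_{\bar\Omega\times[0,T]}u_\eps<0$ for some $T>0$. Since by \eqref{asm-1} the initial data $u_\eps(\cdot,0)=H_\a(\cdot,y,\eps)$ is strictly positive for $\eps$ small (the Gaussian factor dominates), the negative minimum is attained at some $(x_0,t_0)$ with $t_0>0$. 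If $x_0\in\Omega$ is interior, the parabolic maximum principle (applied to the heat operator $\p_t-\Delta$ at an interior spatial minimum with $t_0>0$) forces $u_t\le 0$ and $\Delta u\ge 0$ there, contradicting $u_t-\Delta u=0$ unless the minimum propagates — here one uses a standard perturbation $u_\eps+\eta t$ to get a strict contradiction, or invokes the strong minimum principle. If instead $x_0\in\p\Omega$, then at this boundary minimum point the outward normal derivative satisfies $\frac{\p u_\eps}{\p\nu}(x_0,t_0)\le 0$, while $u_\eps(x_0,t_0)<0$, so $\frac{\p u_\eps}{\p\nu}+\a u_\eps<0$ at $(x_0,t_0)$ because $\a>0$; this contradicts the Robin boundary condition \eqref{robin boundary}. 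Hence $u_\eps\ge 0$ on $\bar\Omega\times[0,T]$ for every $T$, and letting $\eps\to 0^+$ gives $H_\a(x,y,t)\ge 0$ throughout. Then the strong maximum principle upgrades this to the strict inequality \eqref{posi-H}, since $H_\a$ is not identically zero.

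The main obstacle is the behavior at $t=0$: the initial data is a delta distribution, so one cannot directly apply the maximum principle on $\Omega\times[0,T]$. The device of shifting time by $\eps$ and invoking the Minakshisundaram-Pleijel expansion \eqref{asm-1} resolves this, provided one checks that for $\eps$ small enough $H_\a(\cdot,y,\eps)>0$; near the diagonal this follows from the expansion with $a_0\equiv 1$, and away from the diagonal one needs that the Robin heat kernel's short-time behavior is governed by the same leading Gaussian term (which is a standard fact, e.g. via comparison with the Dirichlet or free heat kernel, or directly from the parametrix construction for the Robin problem). A secondary technical point is making the interior maximum-principle step rigorous when the minimum is only non-strict; the standard fix is to replace $u_\eps$ by $u_\eps+\eta(1+t)$ for $\eta>0$, derive a strict contradiction at an interior or boundary minimum (the boundary case still works since $\a\eta(1+t_0)>0$ only strengthens the inequality), and then let $\eta\to 0$. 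Once these two points are in place, the argument is a routine application of the parabolic maximum principle together with the sign condition $\a>0$ on the boundary term.
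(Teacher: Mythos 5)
Your boundary-minimum argument (at a negative boundary minimum, $\frac{\p u_\eps}{\p\nu}\le 0$ forces $\frac{\p u_\eps}{\p\nu}+\a u_\eps<0$ since $\a>0$, contradicting the Robin condition) is correct and is exactly where the sign of $\a$ enters. The approach is, however, genuinely different from the paper's: you run a pointwise parabolic minimum principle on a time-shifted kernel, whereas the paper follows Cheeger--Yau and shows $\int_{\Omega_T}|\nabla h|^2\le 0$ for the negative part $h=\tfrac12(H_\a-|H_\a|)$ via integration by parts over the spacetime region $\Omega_T=\Omega\times(0,T]\setminus B_x(\ve_0)\times(0,\ve_0]$, concluding $h\equiv 0$.

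The problem is the step ``since by \eqref{asm-1} the initial data $u_\eps(\cdot,0)=H_\a(\cdot,y,\eps)$ is strictly positive for $\eps$ small.'' The Minakshisundaram--Pleijel expansion \eqref{asm-1} is a \emph{near-diagonal} short-time expansion: it gives you positivity only for $d(x,y)<\ve_0$ and $t<\ve_0$, which is precisely what the paper records as \eqref{lm1-1}. It says nothing about the sign of $H_\a(x,y,\eps)$ when $d(x,y)$ is bounded away from $0$; there the leading Gaussian term is exponentially small, so it does not dominate the parametrix remainder, and no sign control follows. Your suggested patches do not close this: comparison with the Dirichlet kernel (Proposition \ref{prop-5.1}) itself presupposes the positivity you are trying to prove, and the parametrix construction gives decay estimates, not a sign, away from the diagonal. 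So as written the minimum on $\bar\Omega\times[0,T]$ could occur at $t_0=0$ at a point far from $y$, and the contradiction never gets started. The paper's energy argument is designed exactly to sidestep this: by carving out only the small cylinder $B_x(\ve_0)\times(0,\ve_0]$ where the expansion applies, and using the distributional initial condition (which gives $h(\cdot,0)=0$ on $\Omega\setminus B_x(\ve_0)$) rather than any claim about the pointwise sign of $H_\a(\cdot,y,\eps)$ away from the diagonal, it never needs global positivity at a fixed positive time. To repair your route you would either have to restrict the pointwise minimum principle to a spacetime region that excludes $B_y(\ve_0)\times(0,\ve_0]$ and argue the parabolic boundary of that region carefully (which amounts to redoing the paper's construction), or switch to the standard device of showing $u(z,t)=\int_\Omega H_\a(z,y,t)\phi(y)\,dy\ge 0$ for every $\phi\ge 0$, where the initial data $\phi$ is a genuine nonnegative continuous function.
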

\begin{proof}
In view of the asymptotic expansion \eqref{asm-1} for $H_\a(x,y,t)$, there exists  $\ve_0>0$ such that
\begin{align}\label{lm1-1}
 H_\a(x,y,t)>0   
\end{align}
for $d(x,y)<\ve_0$ and $0<t<\ve_0$. Now we fix $x$, and set
\begin{align}\label{lm1-2}
    h(y,t)=\frac 1 2 \Big( H_\a(x,y,t)-|H_\a(x,y,t)|\Big),
\end{align}
 and for  any $T>\ve_0$ denote
\begin{align*}
    \Omega_T:=\Omega\times(0,T]-B_x (\ve_0)\times (0,\ve_0].
\end{align*}
Clearly $h(y,t)\le 0$ in $\Omega\times(0,T]$,  $h(y,t)=0$ in $B_x (\ve_0)\times [0,\ve_0]$ due to  \eqref{lm1-1}, and $h(y,0)=0$ in $\Omega_T$ due to the initial condition. Then using integration by parts, we compute that
\begin{align}\label{lm1-3}
\begin{split}
  \int_{\Omega_{T}} \langle \nabla h, \nabla H_\a\rangle
 = &\int_{\ve_0}^T \int_\Omega \langle \nabla h, \nabla H_\a\rangle+\int_0^{\ve_0} \int_{\Omega\setminus B_x (\ve_0)} \langle \nabla h, \nabla H_\a\rangle\\
 = &\int_{\ve_0}^T \,dt\int_{\p \Omega} h\frac{\p H_\a}{\p \nu}\, d\sigma_y-\int_{\ve_0}^T \int_\Omega  h \Delta H_\a+\int_0^{\ve_0} \,dt\int_{\p \Omega} h\frac{\p H_\a}{\p \nu}\, d\sigma_y\\
 &-\int_0^{\ve_0} \,dt\int_{\p  B_x (\ve_0)} h\frac{\p H_\a}{\p \nu}\, d\sigma_y-\int_0^{\ve_0} \int_{\Omega\setminus  B_x (\ve_0)} h \Delta H_\a\\
 =& -\a \int_0^T \,dt\int_{\p \Omega} h H_\a\, d\sigma_y-\int_{\ve_0}^T \int_{B_x(\ve_0)}  h \p_t H_\a-\int_0^{T} \int_{\Omega\setminus  B_x (\ve_0)} h \p_t H_\a,   \end{split}
 \end{align}
 where we used the Robin boundary condition $\p H_\a /\p \nu =-\a H_\a$  and $h=0$ on $\p B_x(\ve_0)\times(0,{\ve_0})$ in the last equality, and $d\sigma_y$ denotes the induced measure on $\p\Omega$.
Observing from the definition \eqref{lm1-2} of $h$ that 
$$
hH_\a=h^2, \quad h\p_t H_\a= h\p_t h \text{\quad and \quad}\langle \nabla h, \nabla H_\a\rangle=|\nabla h|^2,
$$
then equality \eqref{lm1-3} becomes
\begin{align*}
\int_{\Omega_{T}} |\nabla h|^2 =-\a \int_0^T \,dt\int_{\p \Omega} h^2\, d\sigma_y-\frac 1 2\int_{\ve_0}^T \int_{B_x(\ve_0)} \p_t h^2-\frac 1 2 \int_0^{T} \int_{\Omega\setminus  B_x (\ve_0)} \p_t h^2.  
\end{align*}
Therefore, using $\a>0$, we have that
\begin{align*}
\int_{\Omega_{T}} |\nabla h|^2 \le& -\frac 1 2\int_{\ve_0}^T \int_{B_x(\ve_0)} \p_t h^2-\frac 1 2 \int_0^{T} \int_{\Omega\setminus  B_x (\ve_0)} \p_t h^2\\
=&\frac 1 2\int_{B_x(\ve_0)} h^2(y, {\ve_0})-\frac 1 2\int_{\Omega} h^2(y, T)\\
=&-\frac 1 2\int_{\Omega} h^2(y, T)\\
\le & 0,
\end{align*}
 where in the first inequality we used $h(y,0)=0$ for $y\in B_x(\ve_0)$, and in the last equality we used fact that 
 $h(y, {\ve_0})=0$ for $y\in B_{\ve_0}(x)$ due to \eqref{lm1-1}.
 Therefore
 \begin{align*}
     \int_{\Omega_{T}}|\nabla h|^2=0
 \end{align*}
implying $h\equiv 0$, so $H_\a(x,y,t)\ge 0$ on $\Omega_{T}$. Since $H_\a$ satisfies heat equation, thus
$$
H_\a(x,y,t)>0
$$
in $\Omega \times \Omega \times(0,\infty)$ by strong maximum principle. 
\end{proof}

\section{Robin heat kernel on model spaces}
In this section we will examine the general properties of the Robin heat kernel  on model spaces for later use.

Recall that $\bar H_\a(r_{\bar o}(x),t)$ denotes the $\bar o$-centered Robin Laplace heat kernel for geodesic ball $V(\bar o, \k , R)$ of radius $R$ centered at $\bar o$ in space form $M^m(\k)$. Then PDE \eqref{RHK-model} gives that $\bar H_\a$ satisfies
\begin{align}\label{1Dr}
   \frac{\p \bar H_\a}{\p t} (r,t)-\frac{\p^2 \bar H_\a}{\p r^2} (r,t)-(m-1)\frac{\sn_\k'(r)}{\sn_\k(r)}\frac{\p \bar H_\a}{\p r} (r,t)=0 
\end{align}
with initial value
\begin{align}\label{1Dr-i}
 \bar H_\a(r(x),0)=\delta_{\bar o}(x),   
\end{align}
and Robin boundary condition 
\begin{align}\label{1Dr-b}
     \frac{\p \bar H_\a }{\p r}  (R,t)+\a \bar H_\a(R,t)=0
\end{align}
for $t>0$.
Now we  rewrite Robin heat kernel $\bar H_\a$ on $V(\bar o, \k, R)$ as a function $\vp(s, t)$,
where
\begin{align}\label{def-s}
    s(r)=\begin{cases}
   \frac{1-\cos(\sqrt{\k} r)}{\k}, & \k>0,\\
    \frac{r^2}{2}, &\k=0,\\
    \frac{ \cosh (\sqrt{-\k}r)-1}{-\k}, & \k<0.
    \end{cases}
\end{align}
Clearly $s'(r)=\sn_\k(r)$.
Then equation \eqref{1Dr} becomes \begin{align}\label{1Ds}
    \p_t \vp(s,t)= \sn_\k^2(r) \vp''(s,t)+m\sn_\k'(r)\vp'(s,t)
\end{align}
for $(s,t)\in (0,s(R))\times(0,\infty)$, and the Robin boundary \eqref{1Dr-b} becomes
\begin{align}\label{1Ds-b}
     \sn_\k(R) \vp'(s(R),t)+\a \vp(s(R), t)=0
\end{align}
for $t>0$. Here and thereafter we denote  $\frac{\p^k \vp}{\p s^k}$ by $\vp^{(k)}$ for short. 
Differentiating  equation \eqref{1Ds} in $s$ twice yields
\begin{align}\label{1Ds-1}
    \p_t \vp'(s,t)= \sn_\k^2(r)\vp^{(3)}(s,t)+(m+2)\sn_\k'(r)\vp''(s,t)-\k m \vp'(s,t),
\end{align}
and
\begin{align}\label{1Ds-2}
    \p_t \vp''(s,t)= \sn_\k^2(r)\vp^{(4)}(s,t)+(m+4)\sn_\k'(r)\vp^{(3)}(s,t)-\k (2m+2) \vp''(s,t).
\end{align}
Differentiating Robin condition \eqref{1Ds-b} in $t$ and applying equations \eqref{1Ds} and \eqref{1Ds-1}, we have 
\begin{align}\label{K3}
     \sn_\k^3 \vp^{(3)}+\big((m+2)\sn_\k\sn_\k'+\a \sn_\k^2 \big)\vp''+m(\a\sn'_\k-\k\sn_\k) \vp'=0
\end{align}
at $s=s(R)$ and $r=R$.
\begin{lemma}\label{lm-dvp}
Let $\a>0$. Then 
\begin{align}\label{posi-Hr}
    \vp'(s,t)<0
\end{align}
for $s<s(R)$ and $t>0$.
\end{lemma}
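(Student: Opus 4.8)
The plan is to show $\vp'(s,t) < 0$ for $s < s(R)$ and $t > 0$ by combining the parabolic maximum principle applied to the function $\vp'$ (which satisfies the drift-diffusion equation \eqref{1Ds-1}) with a careful analysis of the boundary behavior, using the positivity of $\bar H_\a$ from Lemma \ref{lm2.1}. First I would record the structure of \eqref{1Ds-1}: the quantity $w := \vp'$ solves a linear parabolic equation of the form $\p_t w = \sn_\k^2 w'' + (m+2)\sn_\k' w' - \k m w$ on $(0, s(R)) \times (0,\infty)$, which is uniformly parabolic on compact subsets of the interior but degenerates at $s = 0$ (where $\sn_\k(r) \to 0$). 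I would therefore treat the interior and the two endpoints $s = 0$ and $s = s(R)$ separately.

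Next I would establish the sign at the endpoints. At $s = s(R)$, I would use the Robin boundary condition \eqref{1Ds-b} together with the positivity $\vp(s(R), t) = \bar H_\a(R, t) > 0$ (Lemma \ref{lm2.1}): since $\a > 0$ and $\sn_\k(R) > 0$, \eqref{1Ds-b} gives $\vp'(s(R), t) = -\a \vp(s(R),t)/\sn_\k(R) < 0$. At $s = 0$ (i.e. $r = 0$), the degeneracy is actually favorable: $\sn_\k^2(r) w'' \to 0$ there, so $\p_t \vp'|_{r=0} = m\, \vp''(0,t) - \k m\, \vp'(0,t)$, and one shows $\vp'(0,t) = 0$ for all $t > 0$ because $\bar H_\a$ is smooth and radially symmetric, forcing $\p_r \bar H_\a|_{r=0} = 0$ and hence (via $\vp'(s) = (\p_r \bar H_\a)/\sn_\k(r)$ and L'Hôpital, using $\p_r^2 \bar H_\a|_{r=0}$ finite) a finite limit; more carefully, $\vp'(0,t)$ is finite and one needs the strict inequality in the interior, so $s=0$ should be handled as a limiting/boundary point rather than an interior point. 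The cleanest route is: since $\bar H_\a > 0$ on the closed ball and is a decreasing function of $r$ for fixed $t$ (which one can extract from the spectral expansion \eqref{expr} together with Proposition \ref{prop-u}, as each eigenfunction is radially decreasing for $\a > 0$), we already get $\vp' \le 0$; then the strong maximum principle applied to $w = \vp'$, which satisfies a parabolic equation with zeroth-order coefficient $-\k m$, upgrades this to the strict inequality $\vp' < 0$ on the interior, since $\vp'$ cannot attain the value $0$ at an interior point without being identically $0$, which is incompatible with the strict Robin-boundary sign just established.

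The main obstacle I anticipate is making the maximum-principle argument rigorous at the degenerate endpoint $s = 0$ and justifying that $\vp' \not\equiv 0$: one must rule out the possibility that the maximum of $\vp'$ (value $0$) is attained only on the parabolic boundary in a way that evades a contradiction. The resolution is that the boundary point $s = s(R)$ carries a Robin condition, not a Dirichlet one, so the Hopf-type lemma combined with \eqref{1Ds-b} forces the outward derivative of $w$ there to be controlled, and combined with the strict negativity $\vp'(s(R),t) < 0$ this prevents $\vp' \equiv 0$. An alternative, possibly cleaner, approach avoiding delicate boundary regularity is purely spectral: write $\bar H_\a(r,t) = \sum_i e^{-\bar\l_{i,\a} t} \psi_i(r)\psi_i(r_{\bar o}(\cdot))$... but since the kernel is evaluated with the center fixed, one gets $\bar H_\a(r,t) = \sum_i e^{-\bar\l_{i,\a}t} \psi_i(0)\psi_i(r)$, and it is not immediate that this sum is monotone in $r$; so I would favor the maximum-principle proof. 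I expect the write-up to proceed: (i) reduce to showing $\vp' \le 0$ first, using smoothness and $\bar H_\a > 0$; (ii) observe $\vp'(s(R),t) < 0$ strictly from \eqref{1Ds-b} and Lemma \ref{lm2.1}; (iii) apply the strong maximum principle to equation \eqref{1Ds-1} to conclude the strict inequality in the interior.
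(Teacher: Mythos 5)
Your overall strategy (get $\vp'\le 0$ somehow, handle the Robin endpoint $s=s(R)$ via \eqref{1Ds-b} and positivity, then upgrade to strict inequality by the strong maximum principle) shares the general shape of the paper's argument, and your identification of the sign at $s=s(R)$ is exactly right. However, there is a genuine gap at the crucial seeding step (i), and the technique the paper actually uses to close it is missing from your proposal.

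The gap: you never actually establish $\vp'\le 0$. Your stated step (i), ``reduce to showing $\vp'\le 0$ first, using smoothness and $\bar H_\a>0$,'' does not work --- positivity of $\bar H_\a$ and smoothness say nothing about the sign of $\p_r\bar H_\a$. You float a spectral justification (``each eigenfunction is radially decreasing for $\a>0$''), but Proposition \ref{prop-u} only gives monotonicity of the \emph{first} eigenfunction; the higher radial modes oscillate, and as you yourself then concede, the monotonicity of the spectral sum $\sum_i e^{-\bar\l_{i,\a}t}\phi_i(0)\phi_i(r)$ is not evident. You then switch allegiance to the maximum principle, but the maximum principle for $\vp'$ cannot get off the ground without sign information on the full parabolic boundary --- in particular near $t=0$, where $\bar H_\a$ concentrates to a Dirac mass and the behavior of $\vp'$ is not controlled by any boundary condition.

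What the paper does to close exactly this gap: it invokes the short-time Minakshisundaram--Pleijel asymptotics \eqref{asm-r} for $\vp(s,t)$, whose dominant term in $\vp'$ is $-\tfrac{1}{4t}(4\pi t)^{-m/2}e^{-s/(4t)}b_0(s)$; this forces $\vp'(s,t)<0$ on a small parabolic corner $[0,\ve_0]\times(0,\ve_0)$, which simultaneously takes care of the degenerate endpoint $s=0$ and the singular initial layer. With this seed the paper does not run a pointwise maximum principle at all: it conjugates by $e^{\k m t}$ to kill the zeroth-order coefficient $-\k m$ (a detail your version skips; for $\k<0$ that coefficient is positive, which obstructs the naive maximum principle), and then runs a Cheeger--Yau-style energy estimate --- integration by parts against the positive part $\bar\vp_1=\tfrac12(\vp_1+|\vp_1|)$ weighted by $\sn_\k^{m+2}$ --- to show $\bar\vp_1\equiv 0$ on $I_T$, sidestepping all the delicate boundary-regularity issues at the degenerate point $s=0$ that you flagged. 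Only after $\vp'\le 0$ is secured does the strong maximum principle enter, exactly as in your step (iii). To repair your write-up you would need to add the asymptotic-expansion seed and either perform the $e^{\k m t}$ conjugation (to make a classical maximum-principle argument legal) or replace the pointwise maximum principle by the energy argument.
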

\begin{remark}
Because $\p_r\bar H_\a(r,t)=\vp'(s,t)\sn_\k(r)$, so Lemma \ref{lm-dvp} gives
\begin{align}\label{p-dvp}
   \p_r \bar H_\a(r,t)<0
\end{align}
for $(r,t)\in (0,R]\times (0,\infty)$.
\end{remark}
\begin{proof}[Proof of Lemma \ref{lm-dvp}]
By asymptotic expansion formula \eqref{asm-1}, we have following expansion formula  
\begin{align}\label{asm-r}
\vp(s,t)\sim \frac{\exp (-\frac {s} {4t})}{(4\pi t)^{\frac{m}{2}}}\Big(\sum_{i=0}^{\infty} b_i(s) t^i\Big), 
\end{align}
 near $s=0$ as $t\to 0$,
where $b_i$ are smooth 
with $b_0(0)=1$. Thus we have
\begin{align*}
\vp'(s,t)\sim \frac{\exp (-\frac {s} {4t})}{(4\pi t)^{\frac{m}{2}}}\Big(-\frac{1}{4t}\sum_{i=0}^{\infty} b_i(s) t^i+\sum_{i=0}^{\infty} b_i'(s) t^i\Big),   
\end{align*}
and the dominate term is 
$$
-\frac{1}{4t}\frac{\exp (-\frac {s} {4t})}{(4\pi t)^{\frac{m}{2}}}b_0(s)
$$
as $t\to 0$, see also \cite[(2.13)]{CY81}. Thus there exists  $\ve_0>0$  such that
\begin{align}\label{lm3.1-1} 
\vp'(s,t)<0 
\end{align}
for all $(s, t)\in [0,\ve_0]\times(0,\ve_0)$.

 Let
\begin{align*}
    \vp_1(s,t)=e^{\k m t} \vp'(s,t),
\end{align*}
and
\begin{align*}
   \bar \vp_1(s,t)=\frac 1 2 \Big(  \vp_1 (s,t)+|\vp_1(s,t)|\Big),
\end{align*}
then PDE \eqref{1Ds-1} becomes
\begin{align}\label{lm3.1-2}
    \p_t \vp_1(s,t)=\sn_\k^2(r) \vp_1''(s,t)+(m+2)\sn_\k'(r) \vp_1'(s,t)
\end{align}
For $T>\ve_0$, denote
\begin{align*}
    I_T:=[0, s(R)]\times(0,T]-[0, \ve_0]\times (0,\ve_0]. 
\end{align*}
Using integration by parts, we have
\begin{align*}
&\int_{I_T}\vp_1'(s,t) \bar \vp_1'(s,t) \sn_\k^{m+2}(r)\, ds\, dt\\
= &\int_{\ve_0}^T \int_0^{s(R)} \vp_1'(s,t)\bar \vp_1'(s,t) \sn_\k^{m+2}(r)\, dsdt+\int_0^{\ve_0} \int_{\ve_0}^{s(R)} \vp_1'(s,t) \bar \vp_1'(s,t) \sn_\k^{m+2}(r)\, ds\,dt\\
 = &\int_0^T \bar \vp_1(s(R),t)\vp'_1(s(R),t) \sn_\k^{m+2}(R)\, dt-\int_{I_T} \vp_1''(s,t)  \bar \vp_1(s,t)\sn_\k^{m+2}(r)\, ds\, dt\\
 &-\int_{I_T} (m+2) \vp_1'(s,t)\bar \vp_1'(s,t)\sn_\k'(r)\sn_\k^{m}(r)\, ds\, dt.
  \end{align*}
Noticing from \eqref{1Ds-b} and  Lemma \ref{lm2.1} that  
$
    \vp_1(s(R),t)<0
$, so $\bar \vp_1(s(R),t)=0$.
Then we have
\begin{align*}
 &\int_{I_T}\vp_1'(s,t) \bar \vp_1'(s,t) \sn_\k^{m+2}(r)\, ds\, dt\\
 \le &-\int_{I_T} \vp_1''(s,t)  \bar \vp_1(s,t)\sn_\k^{m+2}(r)+(m+2) \vp_1'(s,t)\bar \vp_1'(s,t)\sn_\k'(r)\sn_\k^{m}(r)\, ds\, dt\\
 =&-\int_{I_T}  \bar \vp_1(s,t)(\vp_1)_t(s,t) \sn_\k^{m}(r) \, ds\, dt\\
  =&-\frac 1 2 \int_0^{s(R)}  \bar \vp^2(s,T) \sn_\k^{m}(r) \, ds,
  \end{align*}
 where we used  inequality \eqref{lm3.1-1} and equation \eqref{lm3.1-2}.
Since
$$
 \vp_1'(s,t) \bar \vp_1'(s,t)=(\bar \vp_1'(s,t))^2,
$$
then
\begin{align*}
 \int_{I_T} (\bar \vp_1'(s,t))^2 \sn_\k^{m+2}(r)\, ds\, dt\le 0
  \end{align*}
implying $\bar\vp_1(s,t) \equiv 0$, so 
\begin{align*}
   \vp_1(s,t)\le 0
\end{align*}
for $t>0$. Since $\vp_1(s,t)$ satisfies  equation \eqref{lm3.1-2}, then
$\vp'(s,t)<0$ for $t>0$ follows from the strong maximum principle. 
\end{proof}

\begin{proposition}\label{prop4.1}
Let $\a>0$ and $\l>0$. Suppose $u(r):[0,R]\to \mathbb{R}$ is a solution to \begin{align}\label{1dimu}
   u''(r)+(m-1)\frac{\sn_\k'(r)}{\sn_\k(r)}u'(r)=-\l u(r)
\end{align} 
in $(0,R)$ with $u'(0)=0$. Let
$ g(r)=m \frac{\sn_\k'(r)}{\sn_\k(r)}u'(r)+\l u(r)$. Then 
\begin{align}
      \lim\limits_{r\to 0} g(r)=\lim\limits_{r\to 0} g'(r)=0,
\end{align}
and
 \begin{align}\label{limg2}
    \lim_{r\to 0} g''(r)=-\frac{2\l (\l-\k m)}{m(m+2)}u(0).
\end{align}
\end{proposition}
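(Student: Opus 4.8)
The strategy is to obtain the Taylor expansion of $u$ at $r=0$, substitute it into the expression for $g$ (after first simplifying $g$ by means of the equation), and read off the three limits term by term.

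I would first record the elementary expansions of $\sn_\k$. Since $\sn_\k$ satisfies $\sn_\k''=-\k\sn_\k$ with $\sn_\k(0)=0$ and $\sn_\k'(0)=1$, it is an odd function with $\sn_\k(r)=r-\frac{\k}{6}r^3+O(r^5)$ and $\sn_\k'(r)=1-\frac{\k}{2}r^2+O(r^4)$, so that $\frac{\sn_\k'(r)}{\sn_\k(r)}=\frac1r-\frac{\k}{3}r+O(r^3)$ near $r=0$; these formulas hold simultaneously for $\k>0$, $\k=0$ and $\k<0$. Next, since \eqref{1dimu} has a regular singular point at $r=0$ and $u$ is bounded near $0$ with $u'(0)=0$, standard ODE theory (Frobenius, or: integrate the divergence form $(\sn_\k^{m-1}u')'=-\l\,\sn_\k^{m-1}u$ and bootstrap the regularity) shows that $u$ extends to a smooth even function of $r$; write $u(r)=a_0+a_2r^2+a_4r^4+O(r^6)$ with $a_0=u(0)$.

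Plugging this, together with $u'(r)=2a_2r+4a_4r^3+O(r^5)$ and $u''(r)=2a_2+12a_4r^2+O(r^4)$, into \eqref{1dimu} and collecting powers of $r$: the coefficient of $r^0$ yields $2ma_2+\l a_0=0$, hence $a_2=-\frac{\l a_0}{2m}$; the coefficient of $r^2$ yields $(4m+8)a_4=\big(\frac{2(m-1)\k}{3}-\l\big)a_2$. I would then rewrite $g$ using \eqref{1dimu} in the form $(m-1)\frac{\sn_\k'}{\sn_\k}u'=-u''-\l u$, which gives the clean identity $g(r)=m\frac{\sn_\k'}{\sn_\k}u'+\l u=\frac{\sn_\k'}{\sn_\k}u'-u''$. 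Substituting the expansions into this identity, the leading terms $2a_2$ cancel and one obtains $g(r)=\big(-8a_4-\frac{2\k a_2}{3}\big)r^2+O(r^4)$.

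From this expansion $\lim_{r\to 0}g(r)=\lim_{r\to 0}g'(r)=0$ is immediate (the vanishing of $g(0)$ can alternatively be seen by letting $r\to 0$ in \eqref{1dimu}, which gives $mu''(0)=-\l u(0)$ while $\frac{\sn_\k'}{\sn_\k}u'\to u''(0)$), and $\lim_{r\to 0}g''(r)=-16a_4-\frac{4\k a_2}{3}$. Inserting $a_4=\frac{a_2}{4(m+2)}\big(\frac{2(m-1)\k}{3}-\l\big)$ and then $a_2=-\frac{\l a_0}{2m}$, and simplifying the resulting combination in $\k,\l,m$, gives $\lim_{r\to 0}g''(r)=\frac{4a_2}{m+2}(\l-m\k)=-\frac{2\l(\l-m\k)}{m(m+2)}u(0)$, which is \eqref{limg2}. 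The only step that is not purely mechanical is justifying that $u$ is smooth and even at the singular point $r=0$; granting that, everything else is routine power-series bookkeeping.
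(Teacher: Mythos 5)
Your proof is correct. You verify the same Taylor data that the paper extracts, but you organize the computation differently, and more cleanly. The paper keeps $g$ in its original form $m\frac{\sn_\k'}{\sn_\k}u' + \l u$ and works out $\lim g$, $\lim g'$, $\lim g''$ by repeated applications of L'Hopital's rule interleaved with first and second derivatives of the ODE, which produces a small system of relations among $\lim u'''$, $\lim u^{(4)}$, $\lim g'$, $\lim g''$ that must be solved. You instead (i) note the algebraic simplification $g = \frac{\sn_\k'}{\sn_\k}u' - u''$ coming directly from the equation, and (ii) determine the first two nonzero Taylor coefficients $a_2, a_4$ of $u$ at $r=0$ by substituting the power series into the ODE. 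Both methods rest on the same regularity input: that $u$ extends to a $C^4$ (in fact smooth, even) function at the regular singular point $r=0$. The paper does not state this explicitly either, but it is implicit in writing $\lim_{r\to 0} u^{(4)}(r)$; in the applications (Section 4), $u$ is the radial profile of a smooth eigenfunction on a geodesic ball, so the hypothesis is satisfied. Your remark about the divergence form $(\sn_\k^{m-1}u')' = -\l\,\sn_\k^{m-1}u$ and bootstrapping gives a self-contained way to see it. I rechecked the coefficient bookkeeping: $2ma_2 = -\l a_0$, $(4m+8)a_4 = \bigl(\tfrac{2(m-1)\k}{3}-\l\bigr)a_2$, $g(r) = \bigl(-8a_4 - \tfrac{2\k a_2}{3}\bigr)r^2 + O(r^4)$, and the final simplification $-16a_4 - \tfrac{4\k a_2}{3} = \tfrac{4a_2}{m+2}(\l - m\k) = -\tfrac{2\l(\l-m\k)}{m(m+2)}u(0)$ all check out.
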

 
 \begin{proof}
 As $r\to 0$, equation \eqref{1dimu} gives
\begin{align}\label{udd}
\lim_{r\to 0} u''(r)=-\frac{\l}{m} u(0),
\end{align}
then we  have 
\begin{align}\label{limg}
\lim_{r\to 0} g(r)=\lim_{r\to 0} m \frac{\sn_\k'(r)}{\sn_\k(r)}u'(r)+\l u(0)=m\lim_{r\to 0} u''(r)+\l u(0)=0.
\end{align}
Differentiating $g$ in $r$ and using equality $(\sn_\k')^2-\sn_\k\sn_\k''=1$, we have
\begin{align}\label{gd}
g'(r)=-\frac{m}{\sn^2_\k(r)}u'(r)+m \frac{\sn_\k'(r)}{\sn_\k(r)}u''(r)+\l u'(r),
\end{align}
then using L'Hopital's rule and $u'(0)=0$, we compute that
\begin{align*}
 \lim_{r\to 0} g'(r) =&  \lim_{r\to 0}\Big( \frac{\frac{- m}{\sn_\k(r)}u'(r)+m \sn_\k'(r)u''(r)}{\sn_\k (r)}\Big)\\
 =&m\lim_{r\to 0}\Big( \frac{-\k\sn^2_\k(r)-1 }{\sn_\k(r)\sn'_\k(r)}u''(r)+\frac{1 }{\sn_\k^2(r)}u'(r)+u'''(r)\Big)\\
  =&m\lim_{r\to 0}\Big( \frac{-2\k\sn^2_\k(r)- (\sn'_\k(r))^2 }{\sn_\k(r)\sn'_\k(r)}u''(r)+\frac{1 }{\sn_\k^2(r)}u'(r)\Big)+m\lim_{r\to 0} u'''(r)\\
   =&m\lim_{r\to 0}\Big( \frac{-\sn'_\k(r) }{\sn_\k(r)}u''(r)+\frac{1 }{\sn_\k^2(r)}u'(r)\Big)+m\lim_{r\to 0} u'''(r)\\
 =&-\lim_{r\to 0} g'(r)+m\lim_{r\to 0} u'''(r),
\end{align*}
which implies
\begin{align}\label{limgd}
    \lim_{r\to 0} g'(r)=\frac{m}{2}\lim_{r\to 0} u'''(r).
\end{align}
Differentiating ODE \eqref{1dimu} of $u$ in $r$ and  and using equality $(\sn_\k')^2-\sn_\k\sn_\k''=1$, we obtain
\begin{align}\label{1dimud}
    u'''(r)+(m-1)\frac{\sn_\k'(r)}{\sn_\k(r)}u''(r)- \frac{(m-1)}{\sn_\k^2(r)}u'(r)=-\l u'(r).
\end{align}
As $r\to 0$, equation \eqref{1dimud} gives
\begin{align}\label{limuddd}
  \lim_{r\to 0} u'''(r)=-\frac{m-1}{m}  \lim_{r\to 0} g'(r),
\end{align}
where we used equality \eqref{gd} and $u'(0)=0$. Combining \eqref{limuddd} with  \eqref{limgd}, we obtain
\begin{align}\label{limg1}
    \lim_{r\to 0} g'(r)=\lim_{r\to 0} u'''(r)=0.
\end{align}
Differentiating \eqref{gd} in $r$ yields
\begin{align}\label{gdd}
g''(r)=\frac{2m\sn_\k'(r)}{\sn_\k^3(r)}u'(r)-\frac{2 m}{\sn^2_\k(r)}u''(r)+m \frac{\sn_\k'(r)}{\sn_\k(r)}u'''(r)+\l u''(r).
\end{align}
Using  L'Hopital's rule, \eqref{udd}, \eqref{limg1} and \eqref{gdd}, we have
\begin{align*}
 \lim_{r\to 0} g''(r)=&-\frac{\l^2}{m}u(0)+m\lim_{r\to 0}\Big( u^{(4)}(r) -\k\frac{\sn_\k(r)}{\sn'_\k(r)}u'''(r)-\frac{2}{\sn_\k(r)\sn_\k'(r)}u'''(r)\\
 &+\frac{4}{\sn^2_\k(r)}u''(r)-\frac{2\k}{\sn_\k(r)\sn'_\k(r)}u'(r)-\frac{4\sn'_\k(r)}{\sn^3_\k(r)}u'(r)\Big)\\
 =&-\frac{\l^2}{m}u(0)+m\lim_{r\to 0}u^{(4)}(r)-2m\lim_{r\to 0}u^{(4)}(r)-2m\k\lim_{r\to 0}u''(r)\\
 &-2\lim_{r\to 0}(g''(r)-\l u''(r)-mu^{(4)}(r))\\
 =&-\frac{3\l^2}{m}u(0)+m\lim_{r\to 0}u^{(4)}(r)+2\k\l u(0)-2\lim_{r\to 0}g''(r),
\end{align*}
which is equivalent to 
\begin{align}\label{limgdd}
3 \lim_{r\to 0} g''(r)=-\frac{3\l^2}{m}u(0)+2\k\l u(0) +m\lim_{r\to 0}u^{(4)}(r). 
\end{align}
Differentiating equation \eqref{1dimud}, we have
\begin{align}\label{udddd}
    u^{(4)}(r)+(m-1) \frac{\sn'_\k}{\sn_\k}u'''(r)-
 \frac{2(m-1)}{\sn_\k^2}u''(r)-\frac{2(m-1)\sn_\k'}{\sn_\k^3}u'(r)=-\l u''(r).
\end{align}
Letting $r\to 0$,  \eqref{udddd} implies
\begin{align}\label{limudddd}
    \lim_{r\to 0}u^{(4)}(r)+\frac{m-1}{m} \lim_{r\to 0} g''(r)+\frac{m-1}{m}\frac{\l^2}{m}u(0)=\frac{\l^2}{m}u(0),
\end{align}
where we have used \eqref{udd} and \eqref{gdd}. Then equality
\eqref{limg2} follows from equalities \eqref{limgdd} and \eqref{limudddd}.
 \end{proof}

\begin{lemma}
Let $\a>0$. Suppose
$
  R\le  \frac{1}{\sqrt{\k}}\arctan\frac{\a}{\sqrt{\k}}$.
For any $t>0$, it holds
\begin{align}
  \vp''(0, t)> 0.  
\end{align}
\end{lemma}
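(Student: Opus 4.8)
The plan is to evaluate $\vp''(0,t)$ in terms of the first two time derivatives of the model heat kernel at its center and then to expand spectrally, so that Lemma~\ref{2lm2.1} makes every term of the resulting series nonnegative. First I observe that $\vp(s,t)$ extends smoothly across $s=0$: $\bar H_\a(\cdot,t)$ is a smooth radial function, hence a smooth function of $r^2$, and $s(r)=\tfrac12 r^2+O(r^4)$ is a smooth function of $r^2$ with nonvanishing derivative at $r=0$, so $r^2$ is a smooth function of $s$ near $s=0$. I may therefore let $s\to0^+$ in \eqref{1Ds} and \eqref{1Ds-1}; using $\sn_\k(0)=0$ and $\sn_\k'(0)=1$ this gives $\p_t\vp(0,t)=m\,\vp'(0,t)$ and $\p_t\vp'(0,t)=(m+2)\vp''(0,t)-m\k\,\vp'(0,t)$, and eliminating $\vp'(0,t)$ yields
\begin{align*}
   \vp''(0,t)=\frac{1}{m(m+2)}\big(\p_t^2+m\k\,\p_t\big)\bar H_\a(\bar o,\bar o,t),
\end{align*}
since $\vp(0,t)=\bar H_\a(\bar o,\bar o,t)$.

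Next I apply the Sturm--Liouville expansion \eqref{expr} on $\bar B_{\bar o}(R)$, which at the center reads $\bar H_\a(\bar o,\bar o,t)=\sum_{i\ge1}a_i\,e^{-\l_{i,\a}t}$, where $\l_{i,\a}=\l_{i,\a}(\bar B_{\bar o}(R))$ and $a_i\ge0$ is the squared value at $\bar o$ of the $i$-th $L^2$-normalized Robin eigenfunction; by standard heat-kernel estimates this series may be differentiated term by term in $t$ on $(0,\infty)$. Hence
\begin{align*}
   \vp''(0,t)=\frac{1}{m(m+2)}\sum_{i\ge1}\l_{i,\a}\big(\l_{i,\a}-m\k\big)\,a_i\,e^{-\l_{i,\a}t}.
\end{align*}
Now $\l_{i,\a}\ge\l_{1,\a}=\bar\l_1>0$ for all $i$, and the hypothesis $R\le\tfrac{1}{\sqrt\k}\arctan\tfrac{\a}{\sqrt\k}$ is precisely the condition $\sqrt\k\tan(\sqrt\k R)\le\a$ of Lemma~\ref{2lm2.1}, so $\bar\l_1\ge m\k$ (this is automatic when $\k\le0$, since then $m\k\le0<\bar\l_1$). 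Thus every summand is $\ge0$. Finally, $\bar H_\a(\bar o,\bar o,t)\to\infty$ as $t\to0^+$ forces $a_i\ne0$ for infinitely many $i$, while $\l_{i,\a}\to\infty$; so some summand has $a_i>0$ and $\l_{i,\a}>m\k$, and therefore $\vp''(0,t)>0$.

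The only points needing care in this approach are the passage $s\to0^+$ in \eqref{1Ds} and \eqref{1Ds-1} (which requires the smooth extension of $\vp$ noted above, so that $\vp^{(3)}(0,t)$ is finite) and the term-by-term differentiation of \eqref{expr}; both are routine. An alternative, in the spirit of Lemma~\ref{lm-dvp}, is to run an energy/maximum-principle argument for $\vp_2=e^{\k(2m+2)t}\vp''$: equation \eqref{1Ds-2} becomes $\p_t\vp_2=\sn_\k^2\vp_2''+(m+4)\sn_\k'\vp_2'$, the expansion \eqref{asm-r} gives $\vp''>0$ for small $s$ and $t$, and the one new ingredient is the sign of the boundary term at $s=s(R)$: from \eqref{K3}, together with $\vp'<0$ (Lemma~\ref{lm-dvp}) and the hypothesis, which is equivalent to $\a\sn_\k'(R)-\k\sn_\k(R)\ge0$, one sees that $\vp''(s(R),t)<0$ would force $\vp^{(3)}(s(R),t)>0$, so the boundary term has exactly the sign needed to conclude $\vp''\ge0$. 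In either approach the step I expect to be the main obstacle is controlling the two special loci $s=0$ (where the equation degenerates) and $s=s(R)$ (the boundary); it is precisely at $s=s(R)$ that the upper bound on $R$ is consumed --- through Lemma~\ref{2lm2.1} in the first proof and through \eqref{K3} in the second.
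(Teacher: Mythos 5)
Your proof is correct, and it reaches exactly the same spectral formula
\begin{align*}
   \vp''(0,t)=\frac{1}{m(m+2)}\sum_{\l}\phi_\l(0)^2\,\l(\l-m\k)\,e^{-\l t}
\end{align*}
as the paper, but by a genuinely different route. The paper differentiates in the \emph{space} variable: it rewrites $\vp''$ as $\sn_\k^{-2}\bigl(\Delta\bar H_\a+m\tfrac{\sn_\k'}{\sn_\k}|\nabla\bar H_\a|\bigr)$, expands $\bar H_\a$ into radial eigenmodes, and then for each eigenmode invokes Proposition~\ref{prop4.1}, whose content is a careful double L'H\^opital evaluation of $\lim_{r\to0}g''(r)$ with $g=m\tfrac{\sn_\k'}{\sn_\k}u'+\l u$. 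You instead differentiate in the \emph{time} variable: you observe that $\vp$ extends smoothly across $s=0$ (smooth radial functions are smooth in $r^2$, and $s$ is a smooth function of $r^2$ with nonzero derivative), which justifies letting $s\to0^+$ in \eqref{1Ds} and \eqref{1Ds-1}, and this yields the closed expression $\vp''(0,t)=\tfrac{1}{m(m+2)}(\p_t^2+m\k\,\p_t)\bar H_\a(\bar o,\bar o,t)$; the spectral sum then appears because $\p_t$ acts on each mode simply as multiplication by $-\l$. This sidesteps the L'H\^opital computation of Proposition~\ref{prop4.1} entirely (for this lemma; the paper still needs that proposition in Section~\ref{sect7}). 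Both approaches use Lemma~\ref{2lm2.1} for $\bar\l_1\ge m\k$ in the same way. Your strict-positivity argument (the trace diverges as $t\to0^+$, so $a_i\ne0$ for infinitely many $i$, hence for some $i$ with $\l_i>m\k$) is actually spelled out more carefully than in the paper, which leaves the strict inequality implicit. Your remark at the end also correctly identifies the paper's energy/maximum-principle method as an alternative, which is the strategy the paper uses for the companion lemma proving $\vp''(s,t)>0$ at interior $s$.
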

\begin{proof}
Using $s'(r)=\sn_\k(r)$ and $\p_r \bar H_\a=-|\nabla \bar H_\a|$, we have
\begin{align*}
    \vp''(s,t)=&\frac{1}{\sn_\k^2(r)}( \p^2_{r}\bar H_{\a}(r,t)-\frac{\sn_\k'(r)}{\sn_\k(r)}\p_r\bar H_\a(r,t))\\
    =&\frac{1}{\sn_\k^2(r)}(\Delta \bar H_\a+m\frac{\sn'_\k(r)}{\sn_\k(r)}|\nabla \bar H_\a|).
\end{align*}
Denote by $\bar\l_{i,\a}$ Laplace Robin eigenvalues of geodesic ball $\bar B_{\bar o}(R)$ and by $\phi_i$ the associated orthogonal eigenfunctions, satisfying either $\phi_i(\bar o)=0$ or $\phi_i$ is radial (rewritten as $\phi_i(r_{\bar{o}}(x))$), see \cite[Lemma 7]{CLY84}. Then 
the Sturm-Liouville decomposition \eqref{expr} gives
\begin{align*}
    \bar H_\a(r_{\bar o} (x),t)=\sum_{i=1}^\infty e^{-\bar \l_{i,\a} t}\phi_i(\bar o)\phi_i(r_{\bar o}(x))=\sum_{\l} e^{-\l t}\phi_\l(0)\phi_\l(r),
\end{align*}
 where the summation is taken over all $\l$ such that $\phi_\l(0)\neq 0$. Then direct calculation gives 
\begin{align*}
    \Delta \bar H_\a+m\frac{\sn'_\k(r)}{\sn_\k(r)}|\nabla \bar H_\a|=\sum_{\l}e^{-\l t}\phi_\l(0)\Big(-\l \phi_\l(r)+m\frac{\sn'_\k(r)}{\sn_\k(r)}|\phi_\l'(r)|\Big).
\end{align*}
We choose $\phi_\l>0$. Noticing 
\begin{align*}
 -\l \phi_\l=\Delta\phi_\l= \phi_\l'' +(m-1)\frac{\sn_\k'}{\sn_\k} \phi_\l' 
\end{align*}
and
$\phi_\l'(0)=0$, we have $\phi_\l''(0)<0$, so $\phi_\l'(r)<0$ for $r$ near zero. Then we conclude
\begin{align*}
   \Delta \bar H_\a+m\frac{\sn'_\k(r)}{\sn_\k(r)}|\nabla \bar H_\a|=\sum_{\l}e^{-\l t}\phi_\l(0)\Big(-\l \phi_\l(r)-m\frac{\sn'_\k(r)}{\sn_\k(r)}\phi_\l'(r)\Big)
\end{align*}
for all $r$ near $0$. Therefore applying Proposition \ref{prop4.1}, we get
\begin{align*}
    \vp''(0,t)=&\lim_{r\to 0}\sum_{\l}e^{-\l t}\phi_\l(0)\frac{-\l \phi_\l(r)-m\frac{\sn'_\k(r)}{\sn_\k(r)}\phi_\l'(r)}{\sn_\k^2(r)}\\
    =&\sum_{\l}e^{-\l t}\phi^2_\l(0)\frac{\l(\l-\k m)}{m(m+2)}\\
    >&0,
\end{align*}
where we used Lemma \ref{2lm2.1} if $\k>0$. We complete the proof of  the lemma.
\end{proof}

\begin{lemma}
For $t>0$, $0<s<s(R)$, we have
\begin{align}
    \vp''(s,t)>0.
\end{align}
\end{lemma}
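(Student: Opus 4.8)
The plan is to run, for $\vp''$, the same energy-plus-maximum-principle scheme that Lemma~\ref{lm-dvp} used for $\vp'$. Put $\vp_2(s,t):=e^{2(m+1)\k t}\vp''(s,t)$; by \eqref{1Ds-2} this satisfies the homogeneous equation
\[
\p_t\vp_2=\sn_\k^2(r)\,\vp_2''+(m+4)\sn_\k'(r)\,\vp_2'
\]
on $(0,s(R))\times(0,\infty)$, with no zeroth order term. From the small-time expansion \eqref{asm-r} the dominant term of $\vp''$ as $t\to0^+$ is $\frac{1}{16t^2}\frac{\exp(-s/4t)}{(4\pi t)^{m/2}}b_0(s)$, which is positive near $s=0$ (recall $b_0(0)=1$); hence there is $\ve_0>0$ with $\vp_2>0$ on $[0,\ve_0]\times(0,\ve_0)$. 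We then aim to prove $\vp_2\ge 0$ everywhere, after which the strong maximum principle, applied to the nonnegative solution $\vp_2$ (which is $\not\equiv0$ since $\vp_2(0,t)=e^{2(m+1)\k t}\vp''(0,t)>0$ by the previous lemma), upgrades this to $\vp_2>0$, i.e.\ $\vp''(s,t)>0$, on $(0,s(R))\times(0,\infty)$.

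To prove $\vp_2\ge0$, set $\bar\vp_2:=\tfrac12(\vp_2-|\vp_2|)$ and, for $T>\ve_0$, $I_T:=[0,s(R)]\times(0,T]\setminus[0,\ve_0]\times(0,\ve_0]$. Integrating $\int_{I_T}\vp_2'\,\bar\vp_2'\,\sn_\k^{m+4}(r)\,ds\,dt$ by parts in $s$ exactly as in Lemma~\ref{lm-dvp} — the weight $\sn_\k^{m+4}$ kills the boundary contribution at $s=0$ and $\vp_2>0$ on the $\ve_0$-corner kills the one there — and using $\vp_2''\sn_\k^{m+4}+(m+4)\vp_2'\sn_\k^{m+2}\sn_\k'=\sn_\k^{m+2}\p_t\vp_2$ together with $\bar\vp_2\,\p_t\vp_2=\tfrac12\p_t(\bar\vp_2^2)$, one arrives at
\[
\int_{I_T}(\bar\vp_2')^2\,\sn_\k^{m+4}(r)\,ds\,dt+\tfrac12\int_0^{s(R)}\sn_\k^{m+2}(r)\,\bar\vp_2^2(s,T)\,ds=\int_0^T\bar\vp_2(s(R),t)\,\vp_2'(s(R),t)\,\sn_\k^{m+4}(R)\,dt,
\]
the intermediate time-boundary terms cancelling or vanishing as in Lemma~\ref{lm-dvp}. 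If the right-hand side is $\le0$, both nonnegative terms on the left vanish, and one concludes $\bar\vp_2\equiv0$ on $[0,s(R)]\times(0,\infty)$ as in Lemma~\ref{lm-dvp} (using the $\ve_0$-corner positivity); that is, $\vp''\ge0$.

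The crux — and the main obstacle — is showing that the boundary integral at $s=s(R)$ is $\le0$, and this is exactly where the hypothesis $R\le\frac1{\sqrt\k}\arctan\frac{\a}{\sqrt\k}$ (for $\k>0$; no extra hypothesis is needed for $\k\le0$) enters. If $\vp''(s(R),t)\ge0$ then $\bar\vp_2(s(R),t)=0$ and the integrand vanishes, so suppose $\vp''(s(R),t)<0$. Rewriting \eqref{K3} as
\[
\sn_\k^3(R)\,\vp^{(3)}(s(R),t)=-\big((m+2)\sn_\k(R)\sn_\k'(R)+\a\sn_\k^2(R)\big)\vp''(s(R),t)-m\big(\a\sn_\k'(R)-\k\sn_\k(R)\big)\vp'(s(R),t),
\]
observe that $(m+2)\sn_\k(R)\sn_\k'(R)+\a\sn_\k^2(R)>0$ (for $\k>0$ one has $R<\pi/(2\sqrt\k)$, so $\sn_\k,\sn_\k'>0$ on $(0,R]$), and that $\a\sn_\k'(R)-\k\sn_\k(R)\ge0$: for $\k>0$ it equals $\cos(\sqrt\k R)\big(\a-\sqrt\k\tan(\sqrt\k R)\big)\ge0$ by the radius bound, while for $\k\le0$ it is manifestly positive. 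Since $\vp''(s(R),t)<0$ and $\vp'(s(R),t)<0$ by \eqref{posi-Hr}, the right-hand side is $>0$, hence $\vp^{(3)}(s(R),t)>0$ and so $\vp_2'(s(R),t)>0$; as $\bar\vp_2(s(R),t)=\vp_2(s(R),t)<0$, the integrand $\bar\vp_2(s(R),t)\vp_2'(s(R),t)$ is $\le0$ in either case. Thus the boundary integral is $\le0$, which completes the argument along the lines above. (Throughout, $\a>0$ and, when $\k>0$, $R\le\frac1{\sqrt\k}\arctan\frac{\a}{\sqrt\k}$ are in force, as in the preceding lemmas, so that Lemma~\ref{2lm2.1} and the previous lemma apply.)
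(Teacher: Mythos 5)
Your proof is correct and follows the paper's argument step for step: the same substitution $\vp_2=e^{(2m+2)\k t}\vp''$, the same corner positivity from the small-time expansion, the same weighted energy identity on $I_T$, the use of \eqref{K3} together with the radius bound $R\le\frac{1}{\sqrt{\k}}\arctan\frac{\a}{\sqrt{\k}}$ (when $\k>0$) to make the boundary integrand at $s=s(R)$ nonpositive, and the strong maximum principle to upgrade $\vp_2\ge0$ to $\vp_2>0$. The only cosmetic difference is that you verify the sign of the boundary term by a two-case analysis on the sign of $\vp''(s(R),t)$, while the paper writes the equivalent pointwise estimate $\vp_2'\bar\vp_2\le -\frac{(m+2)\sn_\k\sn_\k'+\a\sn_\k^2}{\sn_\k^3}\bar\vp_2^2\le0$ at $r=R$.
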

\begin{proof}
Recall from the asymptotic expansion \eqref{asm-r} for $\vp$ that
\begin{align*}
    \vp''(s,t)\sim (4\pi t)^{-m/2}\exp(-\frac{s}{4t})\Big(\frac{1}{16t^2}\sum_{i=0}^\infty b_i(s)t^i-\frac{1}{2t}\sum_{i=0}^\infty b'_i(s)t^i+\sum_{i=0}^\infty b''_i(s)t^i\Big)
\end{align*}
near $s=0$ as $t\to 0$, then there exists $\ve_0>0$  such that
\begin{align}\label{2dK}
\vp''(s,t)>0  
\end{align}
for all $(s, t)\in [0,\ve_0]\times(0,\ve_0)$.  Let
\begin{align*}
    \vp_2(s,t)=e^{(2m+2)\k t} \vp''(s,t),
\end{align*}
and
\begin{align*}
    \bar \vp_2(s,t)=\frac 1 2 \Big(  \vp_2(s,t)-|\vp_2 (s,t)|\Big).
\end{align*}
Then equation \eqref{1Ds-2} becomes
\begin{align}\label{lm4.3-2}
    \p_t\vp_2(s,t)=\sn_\k^2(r)\vp''_2(s,t)+(m+4)\sn_\k'(r)\vp_2'(s,t).
\end{align}
For any $T>\ve_0$, denote
\begin{align*}
    I_T:=[0, s(R)]\times(0,T]-[0, \ve_0]\times (0,\ve_0]. \end{align*}
Using integration by parts and $\vp_2(\ve_0,t)>0$ for $t<\ve_0$, we calculate that
\begin{align}\label{lm4.3-1}
\begin{split}
&\int_{I_T}\vp_2'(s,t) \bar \vp_2'(s,t) \sn_\k^{m+4}(r)\, ds\, dt\\
= &\int_{\ve_0}^T \int_0^{s(R)}\vp_2'(s,t) \bar \vp_2'(s,t)  \sn_\k^{m+4}(r)\, ds\, dt+\int_0^{\ve_0} \, dt\int_{\ve_0}^{s(R)} \vp_2'(s,t) \bar \vp_2'(s,t)  \sn_\k^{m+4}(r)\, ds\\
 = &\int_0^T\vp'_2(s(R),t) \bar \vp_2(s(R),t)  \sn_\k^{m+4}(R)\, dt-\int_{I_T} \bar \vp_2(s,t) \vp_2''(s,t)\sn_\k^{m+4}(r)\, ds\, dt\\
 &-\int_{I_T} (m+4) \bar \vp_2(s,t) \vp_2'(s,t)\sn_\k'(r)\sn_\k^{m+2}(r)\, ds\, dt.
 \end{split}
  \end{align}
Using equation \eqref{K3} and the assumption $R\le\frac{1}{\sqrt{\k}}\arctan\frac{\a}{\sqrt{\k}}$, we estimate that  
\begin{align*}
    \vp'_2 \bar \vp_2=&-\frac{\bar \vp_2}{\sn^3_\k}\Big(\big((m+2)\sn_\k\sn_\k'+\a \sn_\k^2 \big)\vp_2+e^{(2m+2)\k t} m(\a\sn'_\k-\k\sn_\k) \vp'\Big)\nonumber\\
    \le&-\frac{(m+2)\sn_\k\sn_\k'+\a \sn_\k^2}{\sn^3_\k}\bar\vp_2^2
\end{align*}
for $r=R$, where in the inequality we used $\bar \vp_2\le 0$ and $\vp'(s,t)<0$.
Then equality \eqref{lm4.3-1} yields
\begin{align*}
 &\int_{I_T}\vp_2'(s,t) \bar \vp_2'(s,t) \sn_\k^{m+4}(r)\, ds\, dt\\
 \le &-\int_{I_T} \bar \vp_2  \vp_2''\sn_\k^{m+4}- (m+4) \bar \vp_2 \vp_2'\sn_\k'\sn_\k^{m+2}\, ds\, dt\\
= &-\int_{I_T}  \bar \vp_2  \frac{\p_t \vp_2-(m+4)\sn_\k'\vp_2'}{\sn_\k^2}\sn_\k^{m+4}+(m+4) \bar\vp_2 \vp_2'\sn_\k'\sn_\k^{m+2}\, ds\, dt\\
 =&-\int_{I_T}  \bar \vp_2(s,t)\p_t \vp_2 (s,t) \sn_\k^{m+2}(r) \, ds\, dt\\
  =&-\frac 1 2 \int_0^{s(R)}  \bar \vp_2^2(s,T) \sn_\k^{m+2} \, ds,
  \end{align*}
 where we used  equation  \eqref{lm4.3-2} in the first equality, $\bar \vp_2(s,\ve_0)=0$ for $s<\ve_0$, and $\vp_2(s,0)=0$ for $s>\ve_0$ from the initial condition  in the last equality.
Since
$$
 \bar\vp_2'(s,t) \vp_2'(s,t)=(\bar \vp_2'(s,t))^2,
$$
we conclude
\begin{align*}
 \int_{I_T} (\bar\vp_2'(s,t))^2 \sn_\k^{m+4}(r)\, ds\, dt
  \le  0
  \end{align*}
implying $\bar \vp_2\equiv 0$, so 
\begin{align*}
   \bar \vp''(s,t)\ge 0
\end{align*}
for $s<s(R)$ and  $t>0$. Then $\vp''(s,t)>0$ follows from the strong maximum principle for the heat equation \eqref{1Ds-2}.
\end{proof}

\section{Proof of Theorem \ref{thm1}}
\begin{proposition}\label{prop-5.1}
Let $M$ be a compact manifold with smooth boundary, $o\in M$ and $H_\a(o,y,t)$ be the $o$-centered Robin heat kernel of $M$. Assume 
 $F(y,t)$ is a function defined on $M\times (0,\infty)$ satisfying $F(y,t)\ge 0$ for all $(y,t)\in M\times (0,\infty)$ and $ F(y,0)=\delta_o(y)$.
 \begin{itemize}
\item [(1)] If $\p_t  F-\Delta F\ge 0$ in $M\times (0,\infty)$, and $\p F/\p \nu+\a F\ge 0$ on $\p M\times (0,\infty)$. Then 
    \begin{align}\label{pr5.1-1}
    H_\a(o,y,t)\le  F(y,t)
\end{align}
in $M\times(0,\infty)$.
\item [(2)] If $\p_t F-\Delta F\le 0$ in $M\times (0,\infty)$, and $\p F/\p \nu+\a F\le 0$ on $\p M\times (0,\infty)$. Then 
    \begin{align}\label{pr5.1-2}
    H_\a(o,y,t)\ge  F(y,t)
\end{align}
in $M\times(0,\infty)$.
\end{itemize}
\end{proposition}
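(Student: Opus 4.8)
The plan is to prove both parts at once via a Duhamel-type convolution identity, which converts the comparison into a monotonicity statement and makes essential use of the positivity $H_\a>0$ from Lemma \ref{lm2.1}. Throughout I use that $H_\a$ is smooth on $M\times M\times(0,\infty)$, is symmetric in its two spatial slots, and satisfies the heat equation together with the Robin condition $\p H_\a/\p\nu=-\a H_\a$ separately in each spatial variable; and I take the regularity of $F$ implicit in the statement, namely $F\in C^{2,1}(M\times(0,\infty))$ with $F(y,0)=\delta_o(y)$ understood in the distributional sense of the introduction.

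I would fix $y\in M$ and $t>0$ and study, for $\tau\in(0,t)$,
\[
 I(\tau):=\int_M H_\a(y,x,t-\tau)\,F(x,\tau)\,d\mu_g(x).
\]
The key computation is that, differentiating under the integral and using $\p_s H_\a(y,\cdot,s)=\Delta H_\a(y,\cdot,s)$, Green's second identity on the compact manifold $M$, and the Robin condition for $H_\a$ to combine the two boundary contributions, one gets for $\tau\in(0,t)$
\[
 I'(\tau)=\int_M H_\a(y,x,t-\tau)\big(\p_\tau F-\Delta F\big)(x,\tau)\,d\mu_g(x)+\int_{\p M}H_\a(y,x,t-\tau)\Big(\frac{\p F}{\p\nu}+\a F\Big)(x,\tau)\,dA(x).
\]
Since $\a>0$ gives $H_\a>0$ by Lemma \ref{lm2.1}, the sign hypotheses in case (1) yield $I'\ge0$ on $(0,t)$, and those in case (2) yield $I'\le0$ on $(0,t)$; in either case $I$ is monotone on $(0,t)$.

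It then remains to identify the endpoint limits. As $\tau\to0^+$, $H_\a(y,\cdot,t-\tau)\to H_\a(y,\cdot,t)$ uniformly on $M$ while the nonnegative densities $F(\cdot,\tau)$ have uniformly bounded mass (test $F(\cdot,0)=\delta_o$ against $\vp\equiv1$), so $I(\tau)\to H_\a(y,o,t)=H_\a(o,y,t)$; this is where the hypothesis $F\ge0$ enters. As $\tau\to t^-$, $H_\a(y,\cdot,t-\tau)$ is an approximate identity at $y$ with $L^1$-mass bounded by $1$ (a consequence of $\a>0$ and the maximum principle) while $F(\cdot,\tau)\to F(\cdot,t)$ uniformly on $M$, so $I(\tau)\to F(y,t)$. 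Combining, in case (1) $H_\a(o,y,t)=\lim_{\tau\to0^+}I(\tau)\le\lim_{\tau\to t^-}I(\tau)=F(y,t)$, which is \eqref{pr5.1-1}, and in case (2) the monotonicity is reversed, giving \eqref{pr5.1-2}. The only genuine obstacle is this last step: justifying the differentiation of $I$ and the two endpoint limits in the presence of the singular initial datum of $F$. I would dispatch it by first proving monotonicity of $I$ on each compact subinterval $[\e,t-\e]\subset(0,t)$, where all integrands are smooth and bounded on $M$, and then letting $\e\to0$; the rest is just Green's identity, the Robin boundary condition, and the positivity of $H_\a$.
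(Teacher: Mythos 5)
Your proof is correct and follows essentially the same route as the paper: the paper also writes $F(y,t)-H_\a(o,y,t)=\int_0^t \frac{\p}{\p\tau}\bigl(\int_M F(x,\tau)H_\a(x,y,t-\tau)\,dx\bigr)d\tau$ (Duhamel), then applies Green's identity and the Robin boundary condition to bound the integrand, using $H_\a>0$ to preserve the sign. The only difference is presentational: you phrase it as monotonicity of $I(\tau)$ with separate endpoint limits and are a bit more explicit about the regularization on $[\eps,t-\eps]$, whereas the paper integrates directly and treats the endpoints implicitly.
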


\begin{proof}[Proof of (1)]
Note from Duhamel’s principle that
\begin{align*}
    &F(y, t)-H_\a(o,y,t)\\
    =&\int_M F(x,t) H_\a(x,y,0) \,dx-\int_M  F(x,0) H_\a(x,y,t) \,dx\\
    =&\int_0^t \frac{\p}{\p \tau}\int_M F(x,\tau) H_\a(x,y,t-\tau) \,dx \, d\tau\\
        =&\int_0^t \int_M F_\tau(x,\tau) H_\a(x,y,t-\tau) \,dx \, d\tau-\int_0^t \int_M F(x,\tau) \p_\tau H_\a(x,y,t-\tau) \,dx \, d\tau,
\end{align*}
see for example \cite[Page 865]{LT95}.
Then using the assumptions 
$
\p_t  F-\Delta F\ge 0
$ in $M\times (0,\infty)$  we estimate that
\begin{align*}
    &F(y, t)-H_\a(o,y,t)\\
    \ge &\int_0^t \int_M \Delta_x F(x,\tau) H_\a(x,y,t-\tau) \,dx \, d\tau-\int_0^t \int_M F(x,\tau) \Delta_x H_\a(x,y,t-\tau) \,dx \, d\tau\\
 = &\int_0^t \int_{\p M} \frac{\p F}{\p \nu_x}(x,\tau) H_\a(x,y,t-\tau)-F(x,\tau)\frac{\p  H_\a}{\p \nu_x} (x,y,t-\tau) \,d\sigma_x \, d\tau\\
 \ge &\int_0^t \int_{\p M} -\a F (x,\tau) H_\a(x,y,t-\tau)+\a F(x,\tau)H_\a(x,y,t-\tau) \,d\sigma_x \, d\tau\\
 =&0,
\end{align*}
where we used the boundary condition $\p F/\p \nu+\a F\ge 0$ on $\p M\times (0,\infty)$ in the last inequality. 
This completes the proof of  \eqref{pr5.1-1}. 

The proof of \eqref{pr5.1-2} is similar, we omit the detail here.
\end{proof}
An immediately consequence of Proposition \ref{prop-5.1} is the following monotonicity result on Robin parameter for the Robin heat kernel.
\begin{corollary}
Let $M$ be a compact manifold with smooth boundary $\p M$.
\begin{itemize}
\item [(1)] Denote by $G(x,y,t)$, $H_\a(x,y,t)$ and $K(x,y,t)$ be the Dirichlet, Robin (with positive Robin parameter $\a$) and Neumann heat kernels respectively. Then 
\begin{align*}
    G(x,y,t)<H_\a(x,y,t)<K(x,y,t)
\end{align*}
in $M\times M\times (0,\infty)$.

\item [(2)] Robin heat kernel is monotone decreasing in Robin parameter. Namely for $0<\a_1<\a_2$, it holds
\begin{align*}
   H_{\a_2}(x,y,t)<H_{\a_1}(x,y,t)
\end{align*}
in $M\times M\times (0,\infty)$.
\end{itemize}
\end{corollary}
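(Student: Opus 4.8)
The plan is to obtain both statements as direct specializations of Proposition~\ref{prop-5.1}, taking the comparison function $F$ to be, in turn, the competing heat kernel. Before that I would record one preliminary fact used throughout: for every $t>0$ and every $x$, each of $G(x,\cdot,t)$, $H_\a(x,\cdot,t)$, $K(x,\cdot,t)$ is strictly positive on all of $M$, \emph{including} $\p M$. For $H_\a$ this is Lemma~\ref{lm2.1} supplemented by Hopf's lemma (a vanishing Robin boundary value would force the normal derivative to vanish there as well, contradicting Hopf); for $G$ and $K$ the interior positivity is classical (\cite{CY81}), and positivity up to $\p M$ again follows from the strong maximum principle together with Hopf's lemma. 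In the Dirichlet case I would moreover note that $\p G/\p\nu<0$ on $\p M$, once more by Hopf's lemma, since $G$ is positive inside and vanishes on $\p M$. Also, all three kernels satisfy the initial condition $F(y,0)=\delta_x(y)$, as required by Proposition~\ref{prop-5.1}.

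For part (2), fix $x\in M$ and set $F(y,t):=H_{\a_1}(x,y,t)$. Then $F\ge 0$, $F(\cdot,0)=\delta_x$, $\p_t F-\Delta F=0$, and on $\p M\times(0,\infty)$ the Robin condition for $\a_1$ gives $\p F/\p\nu+\a_2 F=(\a_2-\a_1)F>0$, since $\a_2>\a_1$ and $F>0$ on $\p M$. Proposition~\ref{prop-5.1}(1) applied with Robin parameter $\a_2$ then yields $H_{\a_2}(x,y,t)\le H_{\a_1}(x,y,t)$. To make the inequality strict I would revisit the Duhamel identity used to prove Proposition~\ref{prop-5.1}: since $F$ itself solves the heat equation, the first inequality in that chain is an equality, and the only remaining term is the boundary integral $\int_0^t\int_{\p M}(\a_2-\a_1)FH_{\a_2}\,d\sigma\,d\tau$, which is strictly positive because the integrand is a product of strictly positive kernels. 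Alternatively, $H_{\a_1}-H_{\a_2}$ is a nonnegative solution of the heat equation that is not identically zero (equality on $\p M$ would force $H_{\a_1}=H_{\a_2}=0$ there, contradicting boundary positivity), so the strong maximum principle gives strict positivity.

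Part (1) runs the same way with two different competitors. Taking $F(y,t):=G(x,y,t)$: we have $\p_t F-\Delta F=0$ and, on $\p M$, $\p F/\p\nu+\a F=\p G/\p\nu<0$, so Proposition~\ref{prop-5.1}(2) with Robin parameter $\a$ gives $H_\a(x,y,t)\ge G(x,y,t)$, and the same Duhamel identity makes it strict because $\int_0^t\int_{\p M}(\p G/\p\nu)H_\a\,d\sigma\,d\tau<0$. Symmetrically, taking $F(y,t):=K(x,y,t)$: on $\p M$ we have $\p F/\p\nu+\a F=\a K>0$, so Proposition~\ref{prop-5.1}(1) gives $H_\a(x,y,t)\le K(x,y,t)$, strictly, since $\int_0^t\int_{\p M}\a K H_\a\,d\sigma\,d\tau>0$.

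The only point that needs genuine care---and what I expect to be the sole obstacle---is the boundary analysis that upgrades the non-strict inequalities coming straight out of Proposition~\ref{prop-5.1} to strict ones: this rests on strict positivity of the Robin and Neumann kernels up to $\p M$, and on the strict negativity of $\p G/\p\nu$ on $\p M$, both of which are instances of Hopf's lemma applied to a positive solution of the heat equation. Once these are established, the remainder is a mechanical verification of the hypotheses of Proposition~\ref{prop-5.1}.
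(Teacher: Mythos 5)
Your proposal is correct and is exactly the argument the paper has in mind: apply Proposition~\ref{prop-5.1} with $F$ taken to be the competing heat kernel (the paper simply records the corollary as an ``immediate consequence'' without spelling this out, and your Duhamel-plus-Hopf analysis of the strict inequalities is a genuine and welcome sharpening). One small slip: in your preliminary paragraph you assert that $G(x,\cdot,t)$ is strictly positive on all of $M$ \emph{including} $\p M$, which is false for the Dirichlet kernel; you clearly know this (you immediately invoke the vanishing of $G$ on $\p M$ to get $\p G/\p\nu<0$ by Hopf), so the preliminary claim should just be restricted to $H_\a$ and $K$, and for $G$ replaced by the Hopf statement $\p G/\p\nu<0$ on $\p M$.
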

\begin{proof}[Proof of Theorem \ref{thm1}]
(1) Suppose $\Ric\ge (m-1)\k$ on $B_o(R)$, then we have 
\begin{align}\label{laplace-com1}
\Delta r_o(x)\le(m-1)\frac{\sn_\k'(r)}{\sn_\k(r)}
\end{align}
for all $x\in M\setminus\{o, C(o)\}$ by the Laplace comparison for distance function. We transplant Robin heat kernel $\bar H_\a(r_{\bar o},t)$ on the model space $V(\bar o,\k,R)$ to the geodesic ball $B_o(R)$ by
\begin{align*}
    F(x,t):=\bar H_\a (r_o(x),t), 
\end{align*}
for $x\in B_o(R)$ and $t>0$.
Clearly $F(x,t)>0$ for $t>0$ and $F(x,0)=\delta_o(x)$.
Using inequalities \eqref{p-dvp} and  \eqref{laplace-com1}, we estimate
\begin{align*}
   & \p_t F(x, t)-\Delta F(x, t)\\
    =&\p_t\bar H_\a(r_o(x), t)-\bar H_\a'(r_o(x), t)\Delta r_o(x)-\bar H_\a''(r_o(x), t)\\
    \le &\p_t \bar H_\a(r_o(x), t)-(m-1)\frac{\sn_\k'(r)}{\sn_\k(r)}\bar H_\a'(r_o(x), t)-\bar H_\a''(r_o(x), t)\\
    =&0
\end{align*}
for all $x\in M\setminus\{o, C(o)\}$, where the last equality follows from \eqref{1Dr}. By the boundary condition \eqref{1Dr-b}, it follows that  $F$ satisfies
\begin{align*}
\p_\nu F(x, t)  +\a  F(x, t) =\bar H_\a'(R, t)  +\a  \bar H_\a(R, t)=0
\end{align*}
for $x\in \p B_o(R)$. Since the cut locus is a null set, standard argument via approximation shows that $\bar H(r_o(x),t)$ satisfies
\begin{align*}
    \begin{cases}
    \p_t F(x, t)-\Delta  F(x, t)\le 0, & (x,t)\in B_o(R)\times(0,\infty),\\
   \p_\nu  F(x, t)  +\a F(x, t) =0, & (x,t)\in \p B_o(R)\times(0,\infty),
    \end{cases}
\end{align*}
in the distributional sense. It then follows from part (2) of Proposition \ref{prop-5.1} that $ F(x, t)\le H_\a(o,x,t)$, proving
\begin{align*}
  \bar H_\a(r_o(x), t)\le H_\a(o,x,t)  
\end{align*}
for $(x,t)\in B_o(R) \times(0,\infty)$.

(2) Suppose  $\operatorname{Sect}\le \k$ on $B_o(R)$. Then we have $\Delta r(x)\ge(m-1)\frac{\sn_\k'(r)}{\sn_\k(r)}$ for all $x\in M\setminus\{o, C(o)\}$ by the Laplace comparison for distance function. Same argument as in the proof of (1) show that
\begin{align*}
    \begin{cases}
    \p_t F(x, t)-\Delta F(x, t)\ge 0, & (x,t)\in B_o(R)\times(0,\infty),\\
   \p_\nu  F(x, t)  +\a  F(x, t) =0, & (x,t)\in \p B_o(R)\times(0,\infty),
    \end{cases}
\end{align*}
in the distributional sense. The desired estimate $\bar H_\a(r_o(x), t)=F(x,t)\ge H_\a (o,x,t) $ holds from part (1) of Proposition \ref{prop-5.1}.

\end{proof}

\section{Proof of Theorem \ref{thm2}}
Let $M$ be an $m$-dimensional compact minimal submainfolds of $N^n$. We denote by $\nabla^M$ and $\nabla^N$ the covariant derivative on $M$ and $N$ respectively, and by $\Delta_M$ and $\Delta_N$ the Laplace-Beltrami operator on $M$ and $N$ respectively. 
 We recall the following Laplace comparison for distance function $d_N(o,x)$, see for example \cite[Lemma 1]{Mar89} and
\cite[Lemma 2.1]{Palmer99}.
\begin{proposition}\label{PLC}
Let $M^m$ be an $m$-dimensional  minimally immersed submainfold of $N^n$. Suppose the sectional curvature of the ambient space $N$ is bounded by $\k$ from above. Let $p\in M$, and $\Omega_p(R)$ be any smooth connected component of $B_p(R)\cap M$. If $\k>0$, assume further that $R\le \min\{i_N(p), \frac{\pi}{\sqrt{\k}}\}$. Let $F:[0,R]\to \mathbb{R}$ be a smooth function with $F'\ge 0$.  Then 
\begin{align}\label{L-C}
    \Delta_M F(r) \ge\Big(F''(r)-\frac{\sn_\k'(r)}{\sn_\k(r)}F'(r)\Big)|\nabla^M r|^2+m F'(r)\frac{\sn_\k'(r)}{\sn_\k(r)}
\end{align}
on $\Omega_p(R)\setminus p$. Where $i_N(p)$ is the injectivity radius of $N$ from $p$ and $r(x):=d_N(o,x)$.
\end{proposition}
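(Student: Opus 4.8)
The plan is to deduce \eqref{L-C} from the Hessian comparison theorem on the ambient manifold $N$ together with the minimality of $M$. I would set $r(x) = d_N(p,x)$, viewed first as a function on $N$; since $\Omega_p(R)$ sits inside the injectivity radius of $N$ at $p$, $r$ is smooth on $B_p(i_N(p))\setminus\{p\}$, so its restriction to $M$ is smooth on $\Omega_p(R)\setminus p$. For a smooth function $f$ on $N$ and a local orthonormal frame $\{e_1,\dots,e_m\}$ of $TM$, one has the standard identity
\begin{align*}
    \Delta_M (f|_M) = \sum_{i=1}^m \operatorname{Hess}^N f(e_i,e_i) + \langle \nabla^N f, \vec{H}\rangle,
\end{align*}
where $\vec{H}$ is the mean curvature vector of $M$ in $N$. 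Since $M$ is minimally immersed, $\vec{H}\equiv 0$, the trace term drops out, and
\begin{align*}
    \Delta_M r = \sum_{i=1}^m \operatorname{Hess}^N r(e_i,e_i)
\end{align*}
on $\Omega_p(R)\setminus p$.

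Next I would apply the Hessian comparison theorem: because $\operatorname{Sect}_N \le \k$ and $r$ stays below $i_N(p)$ (and below $\pi/\sqrt{\k}$ when $\k>0$, so that $\sn_\k(r)>0$), one has, for every $X\in T_xN$,
\begin{align*}
    \operatorname{Hess}^N r(X,X) \ge \frac{\sn_\k'(r)}{\sn_\k(r)}\Big(|X|^2 - \langle X, \nabla^N r\rangle^2\Big).
\end{align*}
Taking $X = e_i$, summing over $i = 1,\dots,m$, and observing that $\sum_{i=1}^m \langle e_i,\nabla^N r\rangle e_i$ is the orthogonal projection of $\nabla^N r$ onto $TM$, namely $\nabla^M r$, so that $\sum_{i=1}^m \langle e_i,\nabla^N r\rangle^2 = |\nabla^M r|^2$, I would obtain
\begin{align*}
    \Delta_M r \ge \frac{\sn_\k'(r)}{\sn_\k(r)}\Big(m - |\nabla^M r|^2\Big).
\end{align*}

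Finally, I would use the chain rule on $M$, $\Delta_M F(r) = F''(r)|\nabla^M r|^2 + F'(r)\,\Delta_M r$, together with the hypothesis $F'\ge 0$, to get
\begin{align*}
    \Delta_M F(r) \ge F''(r)|\nabla^M r|^2 + F'(r)\frac{\sn_\k'(r)}{\sn_\k(r)}\Big(m - |\nabla^M r|^2\Big),
\end{align*}
and collecting the $|\nabla^M r|^2$ terms yields precisely the right-hand side of \eqref{L-C}. The only delicate point is the regularity issue noted at the outset: the computation requires $r|_M$ to be genuinely $C^2$ on $\Omega_p(R)\setminus p$, which is exactly why $R\le i_N(p)$ and (when $\k>0$) $R\le \pi/\sqrt{\k}$ are imposed; in the borderline case $R = i_N(p)$ one runs the estimate on $\Omega_p(R')$ for $R'<R$ and lets $R'\to R$. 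Beyond this bookkeeping I expect no real obstacle, since the whole argument reduces to the ambient Hessian comparison plus the vanishing of the mean curvature vector.
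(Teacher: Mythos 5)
Your proof is correct and is essentially the standard argument; the paper does not prove Proposition \ref{PLC} but cites Markvorsen and Palmer, and the route you take---decompose $\Delta_M(f|_M)$ into the tangential trace of the ambient Hessian plus the mean curvature term, kill the latter by minimality, apply ambient Hessian comparison under $\Sect_N\le\k$, then use the chain rule $\Delta_M F(r)=F''(r)|\nabla^M r|^2+F'(r)\Delta_M r$ and the sign of $F'$---is exactly what those references do. The small remarks you add (the typo $d_N(o,x)$ should read $d_N(p,x)$, the role of $R\le i_N(p)$ and, when $\k>0$, $R\le\pi/\sqrt\k$ in keeping $r$ within the smooth range of the Hessian comparison, and the $R'\uparrow R$ approximation in the borderline case) are all correct and appropriate.
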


\begin{proof}[Proof of Theorem \ref{thm2}]
We transplant the Robin heat kernel $\bar H(r_{\bar o},t)$ on the model space $V(\bar o,\k,R)$ to the extrinsic geodesic ball $D_o(R)$ of radius $R$ centered at $o$ by
\begin{align*}
    v(x,t):=\bar H_\a (r,t)=\vp(s(r),t), 
\end{align*}
where  $s=s(r)$  defined by \eqref{def-s}.
Clearly $v(x,t)>0$ for $t>0$. Note that the extrinsic distance function $d_N(o,x)$ is asymptotic to the intrinsic distance function as $t\to 0$, hence $v(x,t)\to \delta_o(x)$.

Since the sectional curvature of the ambient space $N$ is bounded from above by $\k$ and $s'(r)=\sn_\k(r)>0$, then applying Proposition \ref{PLC} we have
\begin{align}\label{LCs}
     \Delta_M s(r) \ge \Big(s''(r)-\frac{\sn_\k'(r)}{\sn_\k(r)}s'(r)\Big)|\nabla^M r|^2+m s'(r)\frac{\sn_\k'(r)}{\sn_\k(r)}=m\sn_\k'(r).
\end{align}
Direct calculation gives
\begin{align}\label{thm2-eq1}
\begin{split}
   \p_t v- \Delta_M v=&\p_t \vp- \vp'(s,t) \Delta_M s-\vp''(s, t)|\nabla^M s|^2\\
    \ge &\p_t \vp - m\sn_\k'(r)\vp'(s,t)-\vp''(s,t) \sn^2_\k(r)|\nabla^M d_N(o,x)|^2\\
     \ge &\p_t \vp - m\sn_\k'(r)\vp'(s,t)-\vp''(s,t) \sn^2_\k(r)\\
    =&0,
    \end{split}
\end{align}
where we used inequality  \eqref{LCs} and $ \vp'(s,t)< 0$ in the first inequality, $ \vp''(s,t)>0$ and $| \nabla^M d_N(o,x)|\le| \nabla^N d_N(o,x)|=1$ in the second inequality, and the PDE \eqref{1Ds} of $\vp$ in the last equality. 

For $x\in \p M$, using $\vp'<0$  and $| \nabla^M d_N(o,x)|\le1$, we estimate
\begin{align*}
    \frac{\p v}{\p \nu}=&\vp'(s(R),t) \sn_\k(R)\langle \nabla^M d_N(o,x), \nu\rangle\nonumber\\
    \ge &\vp'(s(R),t) \sn_\k(R)\nonumber\\
    = &-\a \vp (s(R),t),
\end{align*}
where we used  boundary condition \eqref{1Ds-b} in the last inequality. Thus
\begin{align}\label{thm2-eq2}
   \frac{\p v}{\p \nu}+\a v\ge 0,   
\end{align}
for $(x,t)\in \p M\times (0,\infty)$.
In view of \eqref{thm2-eq1} and \eqref{thm2-eq2}, part (1) of Proposition \ref{prop-5.1} gives
$$
H_\a(o,x,t)\le v(x,t),
$$
proving Theorem \ref{thm2}.
\end{proof}

\section{Proof of Theorem \ref{thm3}}\label{sect7}

\begin{proof}[Proof of Theorem \ref{thm3}]
Let $\bar \l$ be the first Laplace Robin eigenvalue for the geodesic ball $V(\bar o,\k, R)$ in space form $M^m(\k)$, and $u(r_{\bar o}(x))$ be a positive corresponding eigenfunction satisfying the ODE \eqref{1dimu-1}. We rewrite $u(r)$ as $w(s)$, where  $s(r)$ is defined by \eqref{def-s}  again. Therefore equation \eqref{1dimu-1} gives
\begin{align}\label{thm1.3-eq1}
    \sn_\k^2(r) w''(s)+m\sn_\k'(r)w'(s)=-\bar \l_1 w(s)
\end{align}
for $s\in[0, s(R))$ with boundary condition
\begin{align}\label{thm1.3-eq2}
     \sn_\k(R) w'(s(R))+\a w(s(R))=0.
\end{align}
It follows from (1) of Proposition \ref{prop-u} and
$w'(s)\sn_\k(r)=u'(r)$ that
\begin{align}\label{3-0}
 w'(s)<0 , \quad s\in (0, s(R)]. 
\end{align}
Direct calculation gives
\begin{align}
    w''(s)=-\frac{1}{\sn_\k^2(r)}\Big(m \frac{\sn_\k'(r)}{\sn_\k(r)}u'(r)+\bar \l_1 u(r)\Big),
\end{align}
thus Lemma \ref{2lm2.1} and Proposition \ref{prop4.1} implies
\begin{align}\label{thm1.3-eq3}
    \lim_{s\to 0}w''(s)>0.
\end{align}
Differentiating the equation \eqref{thm1.3-eq1} in $s$ yields
\begin{align}\label{thm1.3-eq4}
  \sn_\k^3(r)w'''(s)=-(\bar \l_1-\k m)\sn_\k(r)w'(s).
\end{align}
Then we conclude from \eqref{thm1.3-eq3} and \eqref{thm1.3-eq4}  that
\begin{align}\label{thm1.3-eq5}
    w''(s)>0
\end{align}
for $s\in[0, s(R)]$. In fact, if  $w''(s_0)=0$ for some $s_0=s(r_0)\in [0, s(R)]$, then using equation \eqref{thm1.3-eq4} and $w'(s)<0$ we have $w'''(s_0)>0$. Therefore \eqref{thm1.3-eq5} holds by
\eqref{thm1.3-eq3}.

Now we transplant the function $w$ to $M$  by
$$
v(x):=w\Big(s(r(x))\Big)
$$
for $x\in M$, where $r(x)=d_N(o,x)$. Applying  Proposition \ref{PLC}, we estimate that
\begin{align}\label{thm3-6}
\begin{split}
    \Delta_M v(x)=& w'(s) \Delta_M s+w''(s)|\nabla^M s|^2\\
    \le & m\sn_\k'(r)w'(s)+w''(s) \sn^2_\k(r)|\nabla^M d_N(o,x)|^2\\
     \le & m\sn_\k'(r)w'(s)+w''(s) \sn^2_\k(r)\\
    =&-\bar \l_1 w(s),
    \end{split}
\end{align}
where we used inequality \eqref{3-0} in the first inequality,  inequality \eqref{thm1.3-eq5} and $|\nabla^M d_N(o,x)|\le 1$ in the second inequality and  equation \eqref{thm1.3-eq1} of $w$ in the last equality. Thus
\begin{align}\label{eqtest1}
    -\Delta_M v(x)\ge \bar \l_1 v(x) \text{\quad in\quad} M.
\end{align}
For $x\in \p M$, we estimate
\begin{align*}
    \frac{\p v(x)}{\p \nu}=&w'(s) \sn_\k(r)\langle \nabla^M r, \nu\rangle\nonumber\\
    \ge &w'(s) \sn_\k(r)\nonumber\\
    \ge &-\a w(s),
\end{align*}
where we used  \eqref{3-0} and $|\langle \nabla^M r, \nu\rangle|\le 1$ in the first inequality, and (2) of Proposition \ref{prop-u} in the last inequality. Thus 
\begin{align}\label{eqtest2}
   \frac{\p v(x)}{\p \nu}+\a v(x)\ge 0
\end{align}
on $\p M$. Then we conclude from inequalities \eqref{eqtest1}, \eqref{eqtest2} and Barta's inequality (see for example \cite[Theorem 3.1]{LW21}) that
$$
\l_{1,\a}(M)\ge \bar \l_1,
$$
proving inequality \eqref{main ineq}.
Moreover if the equality occurs,  $v(x)$ is a constant multiple of the first eigenfunction. In this case, all inequalities above hold as equalities. Hence $\nabla^M d_N(o,x)=\nu$ on $\p M$ and $|\nabla^M d_N(o,x)|=1$ in $M$. Therefore  $M$ is a minimal cone, implying $M$ is isometric to $\bar B_{\bar o}(R) $. We complete the proof of Theorem \ref{thm3}.
\end{proof}

\section{K\"ahler and Quaternion K\"ahler manifolds}
Recall from system \eqref{RHK-model} that the  $o$-centered Robin heat kernel of the metric ball $\bar B_{\bar o}(R)$ of K\"ahler model space  satisfies one dimensional equation
\begin{align}\label{ka-1}
   \p_t \bar H_\a(r,t)-\frac{\p^2 \bar H_\a}{\p r^2}(r,t) -\Big(\frac{(2m-2)\sn_\k'(r)}{\sn_\k(r)}+\frac{\sn_{4\k}'(r)}{\sn_{4\k}(r)}\Big)\frac{\p \bar H_\a}{\p r}(r,t)=0.
\end{align}
Set
$$
\vp(s,t)=\bar H_\a(r,t),
$$
for $s\in[0, s(R)]$, where
\begin{align*}
    s(r)=\int_0^ r \big(\sn_\k^{2m-2}(r)\sn_{4\k}(r)\big)^{\frac {1}{2m-1}}\, dr.
\end{align*}
Let 
$$
\eta(r)=s'(r)=\big(\sn_\k^{2m-2}(r)\sn_{4\k}(r)\big)^{\frac {1}{2m-1}},
$$
then equation \eqref{ka-1} becomes 
\begin{align}\label{ka-1Ds}
    \vp_t(s,t)= \eta^2(r) \vp''(s,t)+2m\eta'(r)\vp'(s,t)
\end{align}
for $(s,t)\in (0,s(R))\times(0,\infty)$. 
Differentiating equation \eqref{ka-1Ds} in $s$  yields
\begin{align}\label{ka-1Ds-1}
    \p_t \vp'= \eta^2\vp^{(3)}+(2m+2)\eta'\vp''-\frac{2m}{2m-1}\Big((2m+2)\k+\frac{2m-2}{2m-1}(\frac{\sn_\k'}{\sn_\k}-\frac{\sn_{4\k}'}{\sn_{4\k}})^2\Big)\vp'.
\end{align}
\begin{lemma}\label{lm-ka}
Let $\a>0$. Then 
\begin{align}
    \vp'(s,t)<0.
\end{align}
for $s<s(R)$ and $t>0$. 
\end{lemma}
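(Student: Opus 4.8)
The plan is to mimic the proof of Lemma~\ref{lm-dvp}, adapting it to the Kähler model. First I would record the short-time behaviour: by the Minakshisundaram--Pleijel expansion for $\bar H_\a$ on $\bar B_{\bar o}(R)$, rewritten in the variable $s$ as in \eqref{asm-r}, one gets
\begin{align*}
\vp(s,t)\sim \frac{\exp(-\frac{s}{4t})}{(4\pi t)^{m}}\Big(\sum_{i=0}^\infty b_i(s)t^i\Big)
\end{align*}
near $s=0$ with $b_0(0)=1$ (the exponent of $t$ being adjusted since the real dimension is $2m$), so the dominant term of $\vp'(s,t)$ as $t\to 0$ is $-\frac{1}{4t}\frac{\exp(-s/4t)}{(4\pi t)^m}b_0(s)<0$. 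Hence there is $\ve_0>0$ with $\vp'(s,t)<0$ on $[0,\ve_0]\times(0,\ve_0)$.

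Next I would introduce a weighted function to absorb the zeroth-order coefficient in \eqref{ka-1Ds-1}. Writing the coefficient of $\vp'$ in \eqref{ka-1Ds-1} as $-c(r)$ with
\begin{align*}
c(r)=\frac{2m}{2m-1}\Big((2m+2)\k+\frac{2m-2}{2m-1}\big(\tfrac{\sn_\k'}{\sn_\k}-\tfrac{\sn_{4\k}'}{\sn_{4\k}}\big)^2\Big),
\end{align*}
the trouble is that $c$ is not constant, so one cannot simply set $\vp_1=e^{\text{(const)}\,t}\vp'$ as in Lemma~\ref{lm-dvp}. However $c(r)$ is bounded below: when $\k\le 0$ it is $\ge 0$ trivially, and when $\k>0$ it is $\ge \frac{2m(2m+2)}{2m-1}\k>0$ since the squared term is nonnegative. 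Let $c_0\le \inf_{[0,R]} c(r)$ (e.g. $c_0=\min\{0,(2m+2)\k\cdot\frac{2m}{2m-1}\}$, or simply $c_0=0$ when $\k\le0$) and set $\vp_1(s,t)=e^{c_0 t}\vp'(s,t)$. Then $\p_t\vp_1=\eta^2\vp_1''+(2m+2)\eta'\vp_1'+(c_0-c(r))\vp_1$, where $c_0-c(r)\le 0$. Now define $\bar\vp_1=\frac12(\vp_1+|\vp_1|)$, the positive part, which satisfies $\bar\vp_1\ge 0$, $\bar\vp_1=0$ where $\vp_1\le 0$.

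Then I would run the Cheeger--Yau energy argument verbatim. The key point at the boundary is that the Robin condition on $\bar H_\a$ differentiated in $t$, together with $\vp(s(R),t)>0$ (positivity, Lemma~\ref{lm2.1}-type reasoning), forces $\vp'(s(R),t)<0$, hence $\bar\vp_1(s(R),t)=0$ and the boundary term in the integration by parts drops. On the region $I_T=[0,s(R)]\times(0,T]\setminus[0,\ve_0]\times(0,\ve_0]$ one integrates $\int_{I_T}\vp_1'\bar\vp_1'\,\eta^{2m+2}\,ds\,dt$ by parts; using $\vp_1'\bar\vp_1'=(\bar\vp_1')^2$, $\vp_1'\bar\vp_1=0$ pointwise in the lower-order term, and the evolution equation, one gets
\begin{align*}
\int_{I_T}(\bar\vp_1')^2\eta^{2m+2}\,ds\,dt\le \int_{I_T}(c_0-c(r))\bar\vp_1^2\,\eta^{2m}\,ds\,dt-\frac12\int_0^{s(R)}\bar\vp_1^2(s,T)\eta^{2m}\,ds\le 0,
\end{align*}
using $c_0-c\le 0$ and the vanishing of $\bar\vp_1$ at $t=0$ (from the initial data for $s>\ve_0$) and at $t=\ve_0$ for $s<\ve_0$ (from \eqref{lm3.1-1}-type positivity). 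Hence $\bar\vp_1\equiv 0$, i.e. $\vp_1\le 0$, so $\vp'\le 0$; the strict inequality $\vp'(s,t)<0$ for $s<s(R)$, $t>0$ then follows from the strong maximum principle applied to the parabolic equation for $\vp_1$.

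The main obstacle is the nonconstant lower-order coefficient $c(r)$ in \eqref{ka-1Ds-1}: one must verify it is bounded below so that the weight $e^{c_0 t}$ makes $c_0-c(r)\le 0$, and one must check the boundary term genuinely vanishes — that is, that $\vp'(s(R),t)<0$, which needs the differentiated Robin condition and positivity of $\bar H_\a$. Everything else is a routine transcription of the proof of Lemma~\ref{lm-dvp}, with $\sn_\k$ replaced by $\eta$, $m+2$ by $2m+2$, and $\k m$ by $c_0$.
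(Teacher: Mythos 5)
Your plan is essentially the paper's proof, and it is correct in substance. The paper does exactly what you describe: it picks the weight $e^{c_0 t}$ with the specific constant $c_0=\frac{2m(2m+2)\kappa}{2m-1}$, which exactly cancels the constant ($\kappa$-dependent) part of the coefficient in \eqref{ka-1Ds-1}, leaving as residual $-\frac{2m(2m-2)}{(2m-1)^2}\bigl(\frac{\sn_\kappa'}{\sn_\kappa}-\frac{\sn_{4\kappa}'}{\sn_{4\kappa}}\bigr)^2\vp_1$, a manifestly nonpositive multiple of $\vp_1$. Your observation that this is "the main obstacle" and your $c_0\le\inf c(r)$ trick is precisely the right fix; the rest (short-time asymptotics, boundary term vanishing via positivity of $\bar H_\alpha$ plus the Robin condition, Cheeger--Yau integration by parts against $\bar\vp_1$, strong maximum principle) matches the paper line for line.

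One small correction: the parenthetical claim that $c(r)\ge 0$ "trivially" when $\kappa\le 0$ is false for $\kappa<0$. Near $r=0$ one has $\frac{\sn_\kappa'}{\sn_\kappa}-\frac{\sn_{4\kappa}'}{\sn_{4\kappa}}=\kappa r+O(r^3)$, so the squared term vanishes to order $r^2$ while $(2m+2)\kappa<0$ is bounded away from zero; hence $c(r)<0$ near the origin and the choice $c_0=0$ would \emph{not} satisfy $c_0\le\inf c$. Your other choice $c_0=\min\bigl\{0,\tfrac{2m(2m+2)\kappa}{2m-1}\bigr\}$ does work for all $\kappa$ (as does the paper's choice, which drops the $\min$). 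Also, the phrase "$\vp_1'\bar\vp_1=0$ pointwise" is not quite what is used: the identities that actually drive the argument are $\vp_1'\bar\vp_1'=(\bar\vp_1')^2$, $\vp_1\bar\vp_1=\bar\vp_1^2$, and $\bar\vp_1\,\p_t\vp_1=\tfrac12\p_t\bar\vp_1^2$, which give the stated final inequality. Neither of these affects the validity of your overall proposal.
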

\begin{proof}
Similarly as in the proof of Lemma  \ref{lm-dvp}, we choose  $\ve_0>0$  such that
\begin{align}\label{ka-lm3.1-1}
\vp'(s,t)<0 
\end{align}
for all $(s, t)\in [0,\ve_0]\times(0,\ve_0)$.

 Let
\begin{align*}
    \vp_1(s,t)=e^{\frac{2m\k(2m+2)}{2m-1}t} \vp'(s,t),
\end{align*}
and
\begin{align*}
   \bar \vp_1(s,t)=\frac 1 2 \Big(  \vp_1 (s,t)+|\vp_1(s,t)|\Big),
\end{align*}
then equation \eqref{ka-1Ds-1} becomes
\begin{align}\label{ka-lm3.1-2}
    \p_t \vp_1=\eta^2(r) \vp_1''+(2m+2)\eta'(r) \vp_1'-\frac{2m(2m-2)}{(2m-1)^2}(\frac{\sn_\k'}{\sn_\k}-\frac{\sn_{4\k}'}{\sn_{4\k}})^2 \vp_1.
\end{align}
For any $T>\ve_0$, denote
\begin{align*}
    I_T:=[0, s(R)]\times(0,T]-[0, \ve_0]\times (0,\ve_0]. 
\end{align*}
Using integration by parts, we have
\begin{align*}
&\int_{I_T}\vp_1'(s,t) \bar \vp_1'(s,t) \eta^{2m+2}(r)\, ds\, dt\\
= &\int_{\ve_0}^T \int_0^{s(R)} \vp_1'(s,t)\bar \vp_1'(s,t) \eta^{2m+2}(r)\, ds\, dt+\int_0^{\ve_0} \int_{\ve_0}^{s(R)} \vp_1'(s,t) \bar \vp_1'(s,t) \eta^{2m+2}(r)\, ds\, dt\\
 = &\int_0^T \bar \vp_1(s(R),t)\vp'_1(s(R),t) \eta^{m+2}(R)\, dt-\int_{I_T} \vp_1''(s,t)  \bar \vp_1(s,t)\eta^{2m+2}(r)\, ds\, dt\\
 &-\int_{I_T} (2m+2) \vp_1'(s,t)\bar \vp_1'(s,t)\eta'(r)\eta^{2m}(r)\, ds\, dt.
  \end{align*}
Noticing from the Robin boundary condition \eqref{1Ds-b} and  Lemma \ref{lm2.1} that  
$
    \vp_1(s(R),t)<0
$, so $\bar \vp_1(s(R),t)=0$.
Then using equation \eqref{ka-lm3.1-2}, we estimate
\begin{align*}
 &\int_{I_T}\vp_1'(s,t) \bar \vp_1'(s,t) \eta^{2m+2}(r)\, ds\, dt\\
 \le &-\int_{I_T} \vp_1''(s,t)  \bar \vp_1(s,t)\eta^{2m+2}(r)+(2m+2) \vp_1'(s,t)\bar \vp_1(s,t)\eta'(r)\eta^{2m}(r)\, ds\, dt\\
 =&-\int_{I_T}  \bar \vp_1\Big[(\vp_1)_t+\frac{2m(2m-2)}{(2m-1)^2}(\frac{\sn_\k'}{\sn_\k}-\frac{\sn_{4\k}'}{\sn_{4\k}})^2 \vp_1 \Big]\eta^{2m}(r)\, ds\, dt \\
 \le & -\int_{I_T}  \bar \vp_1(s,t)(\vp_1)_t(s,t)\eta^{2m}(r)\, ds\, dt\\
  =&-\frac 1 2 \int_0^{s(R)}  \bar \vp_1^2(s,T) \eta^{2m}(r) \, ds,
  \end{align*}
 where we have used inequality \eqref{ka-lm3.1-1}.
Since
$$
 \vp_1'(s,t) \bar \vp_1'(s,t)=(\bar \vp_1'(s,t))^2,
$$
then
\begin{align*}
 \int_{I_T} (\bar \vp_1'(s,t))^2 \eta^{m+2}(r)\, ds\, dt\le 0
  \end{align*}
implying $\bar\vp_1(s,t) \equiv 0$, so 
\begin{align*}
   \vp_1(s,t)\le 0
\end{align*}
for $s<s(R)$ and $t>0$. Since $\vp_1(s,t)$ satisfies  equation \eqref{ka-lm3.1-2}, then
$\vp'(s,t)<0$ for $t>0$ follows by strong maximum principle. 
\end{proof}
\begin{proof}[Proof of Theorem \ref{thm k1}]
Let $r(x)=d_M(o, x)$ be the distance function on $M$. 
By the curvature assumptions, it follows that 
\begin{align}\label{NZ-com}
\Delta r(x)\le2(m-1)\frac{\sn_\k'(r)}{\sn_\k(r)}+\frac{\sn_{4\k}'(r)}{\sn_{4\k}(r)}
\end{align}
for all $x\in M\setminus\{o, C(o)\}$ from Ni-Zheng's comparison   \cite[Theorem 1.1]{NZ18}.

We transplant Robin heat kernel $\bar H_\a(r_{\bar o},t)$ on model space $V(\bar o,\k,R)$ to geodesic ball $B_o(R)$ by
\begin{align*}
    F(x,t):=\bar H_\a (r_o(x),t), 
\end{align*}
for $x\in B_o(R)$ and $t>0$.
Clearly $F(x,t)>0$ for $t>0$ and $F(x,0)=\delta_o(x)$.
Applying inequality \eqref{NZ-com} and Lemma \ref{lm-ka}, we get
\begin{align*}
   & \p_t F(x, t)-\Delta F(x, t)\\
    =&\p_t\bar H_\a(r_o(x), t)-\bar H_\a'(r_o(x), t)\Delta r_o(x)-\bar H_\a''(r_o(x), t)\\
    \le &\p_t\bar H_\a(r_o(x), t)-\Big(2(m-1)\frac{\sn_\k'(r)}{\sn_\k(r)}+\frac{\sn_{4\k}'(r)}{\sn_{4\k}(r)}\Big)\bar H_\a'(r_o(x), t)-\bar H_\a''(r_o(x), t)\\
    =&0
\end{align*}
for all $x\in M\setminus\{o, C(o)\}$, where the last equality follows from equation \eqref{ka-1}. Using the Robin boundary condition,  we have  
\begin{align*}
\p_\nu F(x, t)  +\a  F(x, t) =\bar H_\a'(R, t)  +\a  \bar H_\a(R, t)=0
\end{align*}
for $x\in \p B_o(R) $. Since the cut locus is a null set, standard argument via approximation shows that $\bar H_\a(r_o(x),t)$ satisfies
\begin{align*}
    \begin{cases}
    \p_t F(x, t)-\Delta  F(x, t)\le 0, & (x,t)\in B_o(R)\times(0,\infty),\\
    \p_\nu  F(x, t)  +\a F(x, t) =0, & (x,t)\in \p B_o(R)\times(0,\infty),
    \end{cases}
\end{align*}
in the distributional sense. It then follows from part (2) of Proposition \ref{prop-5.1} that $ F(x, t)\le H_\a(o,x,t)$, namely
\begin{align*}
  \bar H_\a(r_o(x), t)\le H_\a(o,x,t)  
\end{align*}
for $(x,t)\in  B_o(R)\times(0,\infty)$. This completes the proof of Theorem \ref{thm k1}.
\end{proof}
For quaternion K\"ahler manifolds, we use similar arguments as in Theorem \ref{thm k1} to prove Theorem \ref{thm k2}.

\begin{proof}[Proof of Theorem \ref{thm k2}]
On quaternion K\"ahler model space, \eqref{RHK-model} gives that 
\begin{align}\label{qka-1}
   \frac{\p \bar H_\a}{\p t} -\frac{\p^2 \bar H_\a}{\p r^2} -\Big(\frac{(4m-4)\sn_\k'(r)}{\sn_\k(r)}+\frac{3\sn_{4\k}'(r)}{\sn_{4\k}(r)}\Big)\frac{\p \bar H_\a}{\p r}=0.
\end{align}
Set $\vp(s, t)=\bar H_\a(r,t)$ with
$\displaystyle
    s(r)=\int_0^ r \big(\sn_\k^{4m-4}(r)\sn^3_{4\k}(r)\big)^{\frac {1}{4m-1}}\,dr
$, 
and 
$$
\xi(r):=s'(r)=\big(\sn_\k^{4m-4}\sn^3_{4\k}\big)^{\frac {1}{4m-1}},
$$
where $r(x)=d_M(o, x)$ is the distance function of $M$. Then  equation \eqref{qka-1} is equivalent to 
\begin{align}\label{qka-1Ds}
    \p_t \vp(s,t)= \xi^2(r) \vp''(s,t)+4m\xi'(r)\vp'(s,t)
\end{align}
for $(s,t)\in (0,s(R))\times(0,\infty)$. 
Differentiating equation \eqref{qka-1Ds} in $s$  yields
\begin{align*}
    \p_t \vp'= \xi^2\vp^{(3)}+(4m+2)\xi'\vp''-\frac{4m}{4m-1}\Big((4m+8)\k+\frac{12m-12}{4m-1}(\frac{\sn_\k'}{\sn_\k}-\frac{\sn_{4\k}'}{\sn_{4\k}})^2\Big)\vp'.
\end{align*}
Then by the similar argument as in Lemma \ref{ka-1}, we obtain
\begin{align}\label{qk-d}
    \vp'(s,t)<0,
\end{align}
for $s<s(R)$ and $t>0$.  Then from  the Laplace comparison for distance function on quaternion K\"ahler manifold (see \cite[Theorem 3.2]{BY22}) 
\begin{align}\label{qk-com}
\Delta r(x)\le\frac{(4m-4)\sn_\k'(r)}{\sn_\k(r)}+\frac{3\sn_{4\k}'(r)}{\sn_{4\k}(r)},
\end{align}
and inequality  \eqref{qk-d}, 
we conclude similarly as in the proof of Theorem \ref{thm k1} that
\begin{align*}
  \bar H_\a(r_o(x), t)\le H_\a(o,x,t)  
\end{align*}
for $(x,t)\in  B_o(R)\times(0,\infty)$.  We have completed the proof of Theorem \ref{thm k2}.
\end{proof}

As  consequences of Theorem \ref{thm k1} and Theorem \ref{thm k2}, we observe the following  eigenvalue comparisons of Cheng's type
for the first Robin eigenvalue.
\begin{corollary}
Let $(M^m, g, J)$ be a  K\"ahler manifold of complex dimension $m$ whose holomorphic sectional curvature is bounded from below by $4\k$ and orthogonal Ricci curvature is bounded from below by $2(m-1)\k$ for some $\k\in \R$. 
 Let $B_{o}(R)\subset M$ be the geodesic ball of radius $R$ centered at $o$. Let $\a>0$.  Then
     \begin{align}
        \l_{1,\a} (B_o(R))\le \bar{\lambda}_1(m, \k,\a, R),
     \end{align}
where $\bar{\lambda}_1(m, \k, \a, R)$ denotes the first eigenvalue of one-dimensional eigenvalue problem \begin{equation*}
    \begin{cases}
   \vp''-\left(2(m-1)\frac{\sn_\k'(r)}{\sn_\k(r)}+\frac{\sn_{4\k}'(r)}{\sn_{4\k}(r)}\right)\vp'  =-\l\vp, \\
    \vp'(0)=0,    \quad \vp'(R)+\a \vp(R)=0.
    \end{cases}
\end{equation*}
\end{corollary}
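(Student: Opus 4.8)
The plan is to deduce this corollary from the heat-kernel comparison in Theorem \ref{thm k1} by passing to the long-time limit, in the same spirit as the passage from Theorem \ref{thm1} to \eqref{Ch-1}. The first thing to check is that the number $\bar{\lambda}_1(m,\k,\a,R)$ in the statement is exactly the first Robin eigenvalue $\l_{1,\a}(\bar B_{\bar o}(R))$ of the ball of radius $R$ in the K\"ahler model. Comparing the ODE in the corollary with \eqref{ka-1}, the operator $\p_r^2+\big(2(m-1)\tfrac{\sn_\k'(r)}{\sn_\k(r)}+\tfrac{\sn_{4\k}'(r)}{\sn_{4\k}(r)}\big)\p_r$ is the radial part of $\Delta$ on the K\"ahler model; since the first Robin eigenvalue of $\bar B_{\bar o}(R)$ is simple with an eigenfunction of constant sign (as recalled in Section 2) and $\bar B_{\bar o}(R)$ is rotationally symmetric, that eigenfunction is radial and hence solves the stated one-dimensional problem. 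Therefore $\bar{\lambda}_1(m,\k,\a,R)=\l_{1,\a}(\bar B_{\bar o}(R))$; write $\phi_1(r)>0$ for the corresponding radial eigenfunction on $[0,R]$.

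Next I would record the long-time behaviour of the two Robin heat kernels via the Sturm--Liouville expansion \eqref{expr}, whose convergence is absolute and uniform for each fixed $t>0$. Multiplying \eqref{expr} by $e^{\l_{1,\a}(B_o(R))t}$ and letting $t\to+\infty$ kills all but the leading term, so
\begin{align*}
    e^{\l_{1,\a}(B_o(R))t}\,H_\a(o,x,t)\longrightarrow \vp_1(o)\vp_1(x),
\end{align*}
where $\vp_1>0$ is the first Robin eigenfunction of $B_o(R)$; since $\vp_1$ has constant sign, the limit is a positive finite number for every $x$ with $r_o(x)<R$, and in particular $\vp_1(o)\neq 0$. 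The same computation on the model yields $e^{\bar{\lambda}_1 t}\,\bar H_\a(r_o(x),t)\to \phi_1(0)\,\phi_1(r_o(x))$, again a positive constant when $r_o(x)<R$.

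To conclude I would feed these asymptotics into the pointwise inequality $H_\a(o,x,t)\ge \bar H_\a(r_o(x),t)$ of Theorem \ref{thm k1}. Fix $x$ with $r_o(x)<R$ and set $\mu:=\l_{1,\a}(B_o(R))$ and $\bar\mu:=\bar{\lambda}_1(m,\k,\a,R)$. Then
\begin{align*}
    e^{\mu t}\,H_\a(o,x,t)\;\ge\; e^{(\mu-\bar\mu)t}\cdot e^{\bar\mu t}\,\bar H_\a(r_o(x),t).
\end{align*}
The left-hand side converges to the finite positive number $\vp_1(o)\vp_1(x)$, whereas $e^{\bar\mu t}\bar H_\a(r_o(x),t)\to\phi_1(0)\phi_1(r_o(x))>0$; if $\mu>\bar\mu$ the right-hand side would diverge, a contradiction. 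Hence $\l_{1,\a}(B_o(R))=\mu\le\bar\mu=\bar{\lambda}_1(m,\k,\a,R)$, which is the claimed inequality. An identical argument with Theorem \ref{thm k2} in place of Theorem \ref{thm k1} gives the corresponding comparison against the quaternionic K\"ahler model.

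I expect the only genuine work to be the justification of the $t\to+\infty$ asymptotics: namely that the leading coefficient $\vp_1(o)\vp_1(x)$ is nonzero, which reduces to the positivity of the first Robin eigenfunction in the interior together with $o$ being an interior point, and that the remainder of \eqref{expr} is controlled uniformly enough to be negligible relative to the leading exponential $e^{-\l_{1,\a}t}$. The identification $\bar{\lambda}_1(m,\k,\a,R)=\l_{1,\a}(\bar B_{\bar o}(R))$ and the remaining algebra are routine.
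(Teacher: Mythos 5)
Your proposal is correct and is exactly the argument the paper has in mind: the paper states the corollary as an immediate ``consequence of Theorem \ref{thm k1}'' in the same way that Corollary \ref{coro1} and estimates \eqref{Ch-1}--\eqref{Ch-2} are drawn from the corresponding heat-kernel comparisons via the Sturm--Liouville expansion \eqref{expr} and the $t\to+\infty$ limit, and your contradiction argument with $e^{\mu t}H_\a(o,x,t)\ge e^{(\mu-\bar\mu)t}e^{\bar\mu t}\bar H_\a(r_o(x),t)$ is the standard way to make that limit rigorous. The only small point worth spelling out, which you flag, is that the tail $\sum_{i\ge 2}e^{-\l_{i,\a}t}\vp_i(o)\vp_i(x)$ is $O(e^{-\l_{2,\a}t})$; this follows from Cauchy--Schwarz, since for $t\ge 1$ it is bounded by $e^{-\l_{2,\a}(t-1)}\bigl(\sum_i e^{-\l_{i,\a}}\vp_i(o)^2\bigr)^{1/2}\bigl(\sum_i e^{-\l_{i,\a}}\vp_i(x)^2\bigr)^{1/2}=e^{-\l_{2,\a}(t-1)}H_\a(o,o,1)^{1/2}H_\a(x,x,1)^{1/2}$, so after multiplying by $e^{\l_{1,\a}t}$ it vanishes because $\l_{1,\a}<\l_{2,\a}$ by simplicity of the first eigenvalue.
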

\begin{corollary}
Let $(M^m, g, I, J, K)$ be a quaternion K\"ahler manifold of complex quaternion dimension $m$ whose scalar curvature is bounded from below by  $16m(m+2)\k$ for some $\k\in \R$. 
 Let $B_{o}(R)\subset M$ be the geodesic ball of radius $R$ centered at $o$. Let $\a>0$.  Then
     \begin{align}
        \l_{1,\a} (B_o(R))\le \bar{\lambda}_1(m, \k,\a, R),
     \end{align}
where $\bar{\lambda}_1(m, \k,\a, R)$ denotes the first eigenvalue of one-dimensional eigenvalue problem \begin{equation*}
    \begin{cases}
   \vp''-\left((4m-4)\frac{\sn_\k'(r)}{\sn_\k(r)}+\frac{3\sn_{4\k}'(r)}{\sn_{4\k}(r)}\right)\vp'  =-\l\vp, \\
    \vp'(0)=0,   \quad  \vp'(R)+\a \vp(R)=0.
    \end{cases}
\end{equation*}
\end{corollary}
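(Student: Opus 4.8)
The plan is to read off this eigenvalue comparison from the heat kernel comparison of Theorem \ref{thm k2}, by restricting to the diagonal and extracting the leading exponential rate as $t\to\infty$, in exact analogy with the way Corollary \ref{coro1} follows from Theorem \ref{thm2}.

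First I would set $x=o$ in the conclusion of Theorem \ref{thm k2}, giving $H_\a(o,o,t)\ge\bar H_\a(0,t)>0$ for all $t>0$, where $\bar H_\a$ is the Robin heat kernel of the geodesic ball of radius $R$ in the quaternion K\"ahler model. Then I would use the Sturm--Liouville expansion \eqref{expr}: on the left side $H_\a(o,o,t)=\sum_{i\ge1}e^{-\l_{i,\a}(B_o(R))t}\vp_i(o)^2$, and since the first Robin eigenfunction is positive we have $\vp_1(o)>0$, so the leading term forces $\lim_{t\to\infty}\frac1t\log H_\a(o,o,t)=-\l_{1,\a}(B_o(R))$; the interchange of limit and sum is legitimate by the absolute and uniform convergence in \eqref{expr}, since for a fixed $t_0>0$ the tail $\sum_i\vp_i(o)^2e^{-(\l_{i,\a}-\l_{1,\a})t}$ stays bounded for $t\ge t_0$. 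On the right side, only radial eigenfunctions of the model ball contribute on the diagonal, and a radial eigenfunction is precisely a solution of the one-dimensional problem in the statement, namely the radial reduction of $\Delta$ in the quaternion K\"ahler model, cf.\ \eqref{qka-1} after dividing by the radial density; hence its smallest eigenvalue is $\bar\l_1(m,\k,\a,R)$. Moreover, since the isometry group of the model ball acts transitively on geodesic spheres and the Robin ground state is positive, that ground state is radial, so $\bar\l_1(m,\k,\a,R)$ is in fact the bottom of the full Robin spectrum of the model ball and $\lim_{t\to\infty}\frac1t\log\bar H_\a(0,t)=-\bar\l_1(m,\k,\a,R)$. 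Dividing the diagonal inequality by $t$, taking logarithms and letting $t\to\infty$ then yields $-\l_{1,\a}(B_o(R))\ge-\bar\l_1(m,\k,\a,R)$, which is the asserted inequality.

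These steps are essentially routine once Theorem \ref{thm k2} is in hand; the one point deserving attention, rather than being automatic, is the identification of $\bar\l_1(m,\k,\a,R)$ with the bottom of the Robin spectrum of the model ball — this is where one uses both that only radial modes are seen on the diagonal and that the ground state is radial by positivity. An alternative, self-contained proof, parallel to that of Theorem \ref{thm3} and not using the heat kernel at all, would be to transplant the model ground state $w(s(r(x)))$ to $B_o(R)$ along $r(x)=d_M(o,x)$, combine the quaternion K\"ahler Laplace comparison \eqref{qk-com} with the sign information $w'<0$ and the convexity $w''>0$ (proved exactly as \eqref{qk-d} and as in \eqref{thm1.3-eq5}) to get $-\Delta_M v\ge\bar\l_1 v$ in $B_o(R)$ together with $\p v/\p\nu+\a v\ge0$ on $\p B_o(R)$, and then invoke Barta's inequality; I expect no new obstacle there beyond reproducing those one-dimensional estimates in the quaternion K\"ahler setting. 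I would present the short heat kernel argument and remark on this alternative.
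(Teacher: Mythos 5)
Your argument is correct and is essentially the same as the paper's (implicit) proof: set $x=o$ in the diagonal version of the heat kernel inequality from Theorem \ref{thm k2}, apply the Sturm--Liouville expansion \eqref{expr} on both sides, and extract the leading exponential rate as $t\to\infty$, exactly as Corollary \ref{coro1} is deduced from Theorem \ref{thm2}. Your extra observations — that the radial one-dimensional eigenvalue is indeed the bottom of the Robin spectrum of the model ball because the positive ground state is radial, and that a Barta-type transplantation argument parallel to Theorem \ref{thm3} would give an alternative proof — are both accurate but not needed for the paper's route.
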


\bibliographystyle{plain}
\bibliography{ref}

\end{document}